\numberwithin{equation}{section}
\newtheorem{thrm}{Theorem}[section]
\newtheorem{prpstn}[thrm]{Proposition}
\newtheorem{dfntn}[thrm]{Definition}
\newtheorem{lmm}[thrm]{Lemma}
\newtheorem{rmrk}[thrm]{Remark}
\newtheorem{assumption}[thrm]{Assumption}
\def\<{{\langle}}
\def\>{{\rangle}}
\def\dd{{\rm d}}
\def\mR{\mathbb{R}}
\def\l({\left(}
\def\r){\right)}
\def\T{\mathcal{T}}
\def\dd{\mathrm{d}}
\def \N {\mathbb{N}}
\def\TT{\bar{\T}}
\DeclareMathOperator*{\argmin}{arg\,min}
\DeclareMathOperator{\prox}{prox}
\DeclareMathOperator{\Id}{id}
\DeclareMathOperator{\cvar}{CV@R}
\title{Discrete-time mean field games with risk-averse agents\footnote{This work was supported by a public grant as part of the
Investissement d'avenir project, reference ANR-11-LABX-0056-LMH,
LabEx LMH, and by the FIME Lab (Laboratoire de Finance des Marchés de l'Energie), Paris.} }
\author{J.~Frédéric Bonnans \footnote{CMAP UMR 7641 and Inria, Ecole Polytechnique, route de Saclay, 91128, Palaiseau Cedex, Institut Polytechnique de Paris, France.} \footnote{E-mail: \href{mailto:frederic.bonnans@inria.fr}{frederic.bonnans@inria.fr}}
\and Pierre Lavigne \textsuperscript{$\dagger$}\footnote{E-mail: \href{mailto:pierre.lavigne@polytechnique.edu}{pierre.lavigne@polytechnique.edu}}
\and Laurent Pfeiffer\textsuperscript{$\dagger$}\footnote{E-mail: \href{mailto:laurent.pfeiffer@inria.fr}{laurent.pfeiffer@inria.fr}}}
\begin{document}
\maketitle
\begin{abstract}
We propose and investigate a discrete-time mean field game model involving risk-averse agents. The model under study is a coupled system of dynamic programming equations with a Kolmogorov equation. The agents' risk aversion is modeled by composite risk measures. The existence of a solution to the coupled system is obtained with a fixed point approach. The corresponding 
feedback control allows to construct an approximate Nash equilibrium for a related dynamic game with finitely many players.
\end{abstract}

\section*{Introduction}
The class of mean field games problem was introduced by J-M.~Lasry and P-L.~Lions in \cite{LL06cr1,LL06cr2,LL07mf} and M.~Huang, R.~Malham\'e, and P.~Caines in \cite{HCMieeeAC06}, to study interactions among a large population of players. Many developments and applications have been proposed this last decade, in particular in economics modeling and finance; one can refer for example to Y.~Achdou and al.\@ \cite{ABLLM},  O.~Gu{\'e}ant, J-M.~Lasry and P-L.~Lions \cite{paris-princeton}, and P.~Cardaliaguet and C.-H.~Lehalle  \cite{cardaliaguet2016mean}.
Economic models "\`a la Cournot", considering interactions between the agents via a price variable, have recently received particular attention, let us mention the works of A.~Bensoussan and P.~J.~Graber \cite{graber2015existence}, J.~F.~Bonnans, S.~Hadikanloo, and L.~Pfeiffer \cite{BHP-schauder}, Z. Kobeissi \cite{kobeissi2019classical}, and P.~J.~Graber, V.~Ignazio, and A.~Neufeld \cite{graber2020nonlocal}.

The specificity of the mean field game of this article is the risk aversion of the involved agents. Here risk aversion is modeled with the help of composite risk measures (also called dynamic risk measures). Mathematically, a risk measure $\rho$ is a map that assigns to a random variable $U$ a real number, which is usually high when $U$ is very volatile. In this way $\rho$ can be used to model the reluctance of a player to face highly uncertain expenses.
We refer to the seminal work by P.~Artzner, F.~Delbaen, J-M.~Eber and D.~Heath in \cite{Artzner}.
We will make use of composite risk measures, the natural extension of risk measures to a multistage framework, see for example the article of A.~Shapiro and A.~Ruszczy\'nski \cite{Shapiro-axiom}; for an application to multistage portofolio selection one can refer to A.~Shapiro \cite{shapiro-portofolio}.

Let us describe more precisely our coupled system and the obtained results.
The coupled system describes a population of identical agents which all optimize a linear discrete-time dynamical system (in a continuous state space).
In the model, the associated cost function depends on a variable called belief, which is related to the behavior of the whole group, whence a coupling between a single agent and the population.
Assuming that the population is very large, one can consider that an isolated representative agent has no impact on the belief. Therefore his/her behavior can be conveniently described by dynamic programming equations (in which the belief is a parameter).
Mathematically, the belief is the probability distribution of the states and controls of all agents at the different time steps of the game; it is described via the Kolmogorov equation.
Our first result is an existence result, obtained with a standard fixed point approach. 
In our second result, we show that an optimal feedback control for the mean field game yields an $\varepsilon$-Nash equilibrium for an $N$-player dynamic game, where $\varepsilon \rightarrow 0$ as $N \rightarrow \infty$.
The proof of this result is based on an estimate of the expectation of the Wasserstein distance between the empirical measure of i.i.d.\@ variables and the law of these variables, obtained by N.~Fournier and A.~Guillin \cite[Theorem 1]{Fournier2015}. The approach that we follow was proposed by M.~Huang, P.~Caines, and R.~Malham\'e in \cite{huang2007large}.

Discrete-time and continuous-space mean field game models have been studied in different works. The framework that we propose in this article is close to the one of N.~Saldi, T.~Ba\c sar and M.~Raginsky \cite{basar-discrete}, in particular, we make use of similar weighted spaces. A few works have already investigated the issue of risk aversion. Most of them model risk sensitivity via exponential utility functions, see for example H.~Tembine, Q.~Zhu and T.~Ba\c sar \cite{basar-risk-utility}. The case of robust mean field games is investigated in problem (\textbf{P$2$}) in the work of J.~Moon and T.~Ba\c sar \cite{basar-robust}.
In many economic situations, risk modeling is of interest, in particular in the banking industry \cite{bas2011}. Our approach can also be relevant in situations where mean field games are used to design telecommunication systems or smart grids; see C.~Bertucci et al.\@ \cite{bertucci} and C.~Alasseur, I.~Ben Tahar and A.~Matoussi \cite{Alasseur}. For example, in the latter reference, it could be interesting to take into account the risk of individual no-energy situations or collective black-out situations via robust control.

The article is structured as follows. In Section \ref{pb-formulation} we introduce notations, assumptions, and the system of coupled equations. In Section \ref{int-coupled} we interpret this system as a mean field game system with risk averse agents. In Section \ref{toolbox} we establish general technical results that will be helpful in Section \ref{existence-results}, where we prove the existence of a solution to the coupled system.
Finally in Section \ref{section-6} we investigate the connection between the coupled system and an $N$-player game.

\section{Problem Formulation \label{pb-formulation}}
\subsection{Notations}

%In this section we provide some notations and definitions.
We set $\T := \{ 0, \hdots , T-1\}$ and $\TT := \{ 0, \hdots , T\}$ with $T \in \N^\star$. For any $t\in \TT$ and any vector $(x_0,\ldots,x_t)$
we denote
\begin{equation} \nonumber
x_{[t]} := (x_0,\ldots,x_t).
\end{equation}
We denote 
\begin{equation} \nonumber
id : \mathbb{R}^d \to \mathbb{R}^d,
\end{equation} 
the identity mapping.

\subsubsection*{Functions}

Let $C$-Lip denote the set of Lipschitz functions of modulus $C$ on $\mathbb{R}^d$.
We define the $p$-polynomially weighted space
\begin{equation} \nonumber
\mathcal{G}^C_p := \left\{ f \colon \mathbb{R}^d \to \mathbb{R}^{d'} , \; |f(x)| \leq C(|x|^p +1) \right\},
\end{equation}
where the dimension $d'$ depends on the context, with associated norm
\begin{equation} \nonumber
\| f \|_{\mathcal{G},p} := \sup_{x \in \mathbb{R}^d} \frac{|f(x)|}{1+|x|^p}.
\end{equation}
Let $\mathcal{Q}^C_p \subset \mathcal{G}^C_p$ denote the set of convex mappings $f \colon \mathbb{R}^d \to \mathbb{R}$ satisfying
\begin{equation} \label{Q_p}
-C\leq f(x) \leq C(1+ |x|^p), \quad \forall x \in \mathbb{R}^d.
\end{equation}

\subsubsection*{Probability measures}

Let $\mathcal{P}(\mathbb{R}^d)$ denote the set of probability measures on $\mathbb{R}^d$. Given $p \in [1, +\infty)$, we define the set of finite $p$-th order moment measures
\begin{equation} \nonumber
\mathcal{P}_p(\mathbb{R}^d) := \left \lbrace m \in \mathcal{P}(\mathbb{R}^d) , \; \int_{\mathbb{R}^d} |x|^p \dd m(x) < + \infty \right\rbrace,
\end{equation}
that we endow with the Rubinstein-Kantorovitch distance, defined by
\begin{equation} \nonumber
d_1(\mu,\nu) := \sup_{\phi \in 1-\text{Lip}} \int_{\mathbb{R}^d} \phi(x) \dd (\mu - \nu)(x),
\end{equation}
for any $\mu$ and $\nu \in \mathcal{P}_1(\mathbb{R}^d)$ (see \cite[Particular case 5.15]{villani} for more details). We recall that by the H\"older inequality, $\mathcal{P}_p(\mathbb{R}^d) \subseteq \mathcal{P}_1(\mathbb{R}^d)$ for any $p>1$.
Given $C>0$, we define
\begin{equation} \nonumber
\mathcal{P}^C_p(\mathbb{R}^d) := \left \lbrace m \in  \mathcal{P}_p(\mathbb{R}^d) , \, \int_{\mathbb{R}^d} |x|^p \dd m(x) \leq  C \right \rbrace.
\end{equation}
We also consider the following sets of beliefs
\begin{equation} \nonumber
\mathcal{B}_2 := (\mathcal{P}_2(\mathbb{R}^{2d}))^{T} \times \mathcal{P}_2(\mathbb{R}^d), \qquad
\mathcal{B}^C_2 := (\mathcal{P}^C_2(\mathbb{R}^{2d}))^{T} \times \mathcal{P}^C_2(\mathbb{R}^d),
\end{equation}
endowed with the Rubinstein-Kantorovitch distances for the product topology, also denoted $d_1$.

For any $m$ and $\nu \in \mathcal{P}(\mathbb{R}^d)$, we define the convolution product $\nu \ast m$ by
\begin{equation} \label{convolution-measure}
\int_{\mathbb{R}^d} h(x) \dd (\nu \ast m) (x) := \int_{\mathbb{R}^d}\int_{\mathbb{R}^d} h(y + z) \dd \nu(y) \dd m (z),
\end{equation}
for any bounded Borel map $h \in  \mathbb{R}^d \to \mathbb{R}$.
For any $m \in \mathcal{P}(\mathbb{R}^d)$ and for any Borel map $g \colon \mathbb{R}^d \to \mathbb{R}^{d'}$, we define the image measure $g \sharp m \in \mathcal{P}(\mathbb{R}^{d'})$ by
\begin{equation}  \label{push-forward}
\int_{\mathbb{R}^d} (h \circ g) (x) \dd m(x) = \int_{\mathbb{R}^d} h (y) \dd g 
 \sharp m (y),
\end{equation}
for any bounded Borel map $h \in  \mathbb{R}^d \to \mathbb{R}^{d'}$.

\subsection{Coupled system} \label{subsection:coupled_system}

Let us first introduce the data of the problem. We consider
\begin{itemize}
\item a congestion function $F \colon \TT \times \mathbb{R}^d \times  \mathcal{B}_2 \to \mathbb{R}$
\item a price function $P \colon \T \times \mathcal{B}_2 \to \mathbb{R}^d$
\item an initial distribution $\bar{m} \in \mathcal{P}_2(\mathbb{R}^d)$
\item individual noise distributions $(\nu(t))_{t \in \T} \in (\mathcal{P}_2(\mathbb{R}^d))^T$.
\end{itemize}
The running cost $\ell \colon \T \times \mathbb{R}^d \times \mathbb{R}^d \times  \mathcal{B}_2 \to \mathbb{R}$ is defined by
\begin{equation} \nonumber
\ell(t,x,a,b) = \frac{1}{2} |a|^2 + \langle a,P(t,b) \rangle + F(t,x,b).
\end{equation}
For modeling risk aversion, we consider a family of subsets $(\mathcal{Z}_t)_{t \in \T}$ such that
\begin{equation*}
\mathcal{Z}_t \subseteq \left\{Z \in L^{\infty}(\mathbb{R}^d), \, \int_{\mathbb{R}^d}Z(y) \dd \nu(t,y) = 1,\; Z \geq 0 \right\}, \quad \forall t \in \T.
\end{equation*}
For any $t \in \T$, we define
\begin{equation} \label{eq:mt}
 \mathcal{M}_t := \left\{ \xi \in \mathcal{P}(\mathbb{R}^d),\; \dd \xi =  Z \dd \nu(t),\; Z\in \mathcal{Z}_t \right\}.
\end{equation}
For any $t \in \T$, $\mathcal{Z}_t$ is assumed to be nonempty and convex, thus $\mathcal{M}_t$ is a nonempty and convex subset of $\mathcal{P}(\mathbb{R}^d)$.

We propose to study a \textit{risk averse mean field game} (MFG), taking the form of the following coupled system:
\begin{equation} \label{system} \tag{MFG}
\left\{
\begin{array}{cl}
(\text{i}) &
\begin{cases} \displaystyle
u(t,x) =\inf_{a \in \mathbb{R}^d} \left(\ell(t,x,a,b) + \sup_{\xi \in \mathcal{M}_t} \int_{\mathbb{R}^d} u(t+1,x+ a + y)\dd \xi(y) \right),\\
u(T,x) = F(T,x,b),
\end{cases}\\
\\
(\text{ii}) & \displaystyle
\alpha_t(x) = \argmin_{a \in \mathbb{R}^d} \left(\ell(t,x,a,b) + \sup_{\xi \in \mathcal{M}_t} \int_{\mathbb{R}^d} u(t+1,x + a + y)\dd \xi(y) \right),\\
\\
(\text{iii}) &
\begin{cases}
 m(t+1) =  \nu(t) \ast [ (id + \alpha_t ) \sharp m(t) ],\\
 m(0) = \bar{m},
\end{cases}\\
\\
(\text{iv}) &  \mu(t) = (id,\alpha_t) \sharp m(t),\\
\\
(\text{v}) & b := (\mu(0), \dots, \mu(T-1), m(T)),
\end{array}
\right.
\end{equation}
for any $(t,x) \in \T \times \mathbb{R}^d$.
The five unknowns in the above system are
\begin{itemize}
\item the value function $u \in (\mathcal{G}_2)^{T+1}$
\item the feedback control $\alpha \in (\mathcal{G}_1 \cap 1\text{-Lip})^T$
\item the distribution of states $m \in(\mathcal{P}_2)^{T+1}$
\item the joint distribution of states and controls $\mu \in(\mathcal{P}_2(\mathbb{R}^{2d}))^T$
\item the belief $b \in \mathcal{B}_2$.
\end{itemize}

Let us describe briefly the coupled system; we will justify it more in detail in Section \ref{int-coupled}.
Equation (\ref{system},i) is a dynamic programming equation associated with a discrete-time optimal control problem for a representative agent. The belief $b$ appears as a parameter of the equation, since a single agent has no impact on it.
The corresponding optimal feedback control $\alpha$ is then given by (\ref{system},ii).
Now, assuming that all agents make use of the feedback control $\alpha$, the distribution of their state $m$ is described by the Kolmogorov equation (\ref{system},iii) with initial condition $\bar{m}$.

Our approach for proving the existence of a solution consists in formulating the system \eqref{system} as a fixed point equation. For this purpose, we consider two mappings. The first one, that we call dynamic programming mapping, assigns to a belief $b$ the solutions $u^\star(b)$ and $\alpha^\star(b)$ to equations (\ref{system},i) and (\ref{system},ii), respectively. The second one, the Kolmogorov mapping, assigns to a feedback control $\alpha$ the triplet $(m^\star(\alpha),\mu^\star(\alpha),b^\star(\alpha))$, where $m^\star(\alpha)$, $\mu^\star(\alpha)$, and $b^\star(\alpha)$ are the solutions to (\ref{system},iii), (\ref{system},iv), and (\ref{system},v), respectively.
These two mappings will be investigated in Section \ref{existence-results}. They allow to reformulate the system (\ref{system}) as an equivalent fixed point equation
\begin{equation*}
b= b^\star \circ \alpha^\star(b).
\end{equation*}

\subsection{Assumptions}

We state now the assumptions on the data of the problem, in force all along the article. Note that for the results of Section \ref{section-6} (dealing with the $N$-player dynamic game), we will need a slightly stronger assumption on the mapping $F$.

We make use of the same constant $C$ to formulate the different assumptions. In the sequel, the constant $C$ denotes a generic constant depending only on those involved in the assumptions and $T$; its value can change from an inequality to the next one.

\begin{assumption} \label{A}
There exists $C>0$ such that $\bar{m} \in \mathcal{P}_2^C(\mathbb{R}^d)$ and such that for any $t \in \T$, $\nu(t) \in \mathcal{P}_2^C(\mathbb{R}^d)$.
\end{assumption}

\begin{assumption} \label{Z}
There exists $C>0$ such that for any $t\in\T$ and for any $Z \in \mathcal{Z}_t$,
\begin{equation}  \nonumber
\|Z\|_{\infty} \leq C,
\end{equation}
and there exists $Z' \in \mathcal{Z}_t$ such that
\begin{equation}  \nonumber
Z' \geq \frac{1}{C} \quad \text{a.e.}
\end{equation}
\end{assumption}

\begin{rmrk} \label{M}
Assumption \ref{Z} implies the existence of  $C>0$ such that \begin{equation} \label{assum-Mt}
\mathcal{M}_t \subseteq \mathcal{P}_2^{C}(\mathbb{R}^d), \quad \forall t \in \T.
\end{equation}
The results obtained in Section \ref{existence-results} only require \eqref{assum-Mt} to hold. The full Assumption \ref{Z} will be used in Section \ref{section-6}.
\end{rmrk}

\begin{assumption}
\label{B} There exists $C>0$ such that for any $t\in \T$ and for any $b_1$ and $b_2 \in \mathcal{B}_2$,
\begin{equation} \begin{array}{cl}
({\normalfont \text{i}}) & F(t,\cdot, b_1) \in \mathcal{Q}^C_2,\\
 ({\normalfont \text{ii}}) & \|F(t,\cdot, b_1) - F(t,\cdot, b_2)\|_{\mathcal{G},2} \leq C d_1(b_1,b_2),\\
 ({\normalfont \text{iii}}) & |P(t,b_1) - P(t,b_2)| \leq C d_1(b_1,b_2),\\
 ({\normalfont \text{iv}}) &  |P(t,b_1)| \leq C.
\end{array} \nonumber
\end{equation}
\end{assumption}

\begin{rmrk}
In economics or in finance, prices typically depend on the aggregated demand or supply. One could consider for example
\begin{equation} \nonumber
         P(t,b) := \psi\left(t, \int_{\mathbb{R}^{2d}} \alpha \dd \mu(t,x,\alpha)  \right),
    \end{equation}
where $\psi \colon \T \times \mathbb{R}^d \to \mathbb{R}^d$. In this case, if $\psi$ is a $C$-Lipschitz mapping then for any $b_1$ and $b_2 \in \mathcal{B}_2$, one has that
\begin{equation} \nonumber
|P(t,b_1) - P(t,b_2)|  \leq C \left|\int_{\mathbb{R}^{2d}} \alpha \dd (\mu_1 -\mu_2) (t,x,\alpha) \right|
 \leq C d_1(\mu_1,\mu_2) \leq C d_1(b_1,b_2),
\end{equation}
which implies Assumption \ref{B} {\normalfont(iii)}.  Assumption \ref{B} {\normalfont(iv)} also holds if $|\psi| \leq C$.
\end{rmrk}

\section{Interpretation of the coupled system \label{int-coupled}}

In Subsection \ref{3.1} we describe the risk averse optimal control problem associated with (\ref{system},i-ii). In Subsection \ref{3.2} we justify the Kolmogorov equation (\ref{system},iii).

\subsection{Dynamic programming equation \label{3.1}}

\subsubsection*{Risk measures}

Let $X_0$ and $(Y_{t})_{t\in \T}$ be $(T+1)$-independent random variables defined on a probability space $(\Omega, \mathcal{F}, \mathbb{P})$. Let $\mathcal{L}(X_0) = \bar{m}$ and  $\mathcal{L}(Y_t) = \nu(t)$. We define the filtration $(\mathcal{F}_t)_{t\in \T}$, where $\mathcal{F}_0 := \sigma(X_0)$ is the sigma-algebra generated by $X_0$, and $\mathcal{F}_{t+1} := \sigma(X_0,Y_{[t]})$.
We denote for any $t \in \TT$ and any $p\in [1,+\infty)$
\begin{equation} \nonumber
 \mathbb{L}_t^p(\Omega,\mathbb{R}^{d'}) := L^p(\Omega,\mathcal{F}_t, \mathbb{P},\mathbb{R}^{d'}),
\end{equation}
the space of $\mathcal{F}_t$ measurable random variables with finite $p$-th order moment and value in $\mathbb{R}^{d'}$. When the dimension is $d'=1$, we simplify the notation: $\mathbb{L}_t^p := \mathbb{L}_t^p(\Omega,\mathbb{R})$.

\begin{dfntn}
Given $t \in \T$, we say that a mapping $\rho_{t} \colon  \mathbb{L}_{t+1}^1 \to \mathbb{L}_{t}^1$ is a {\em one-step conditional
risk mapping} if it satisfies the following conditions:
\begin{itemize}
\item \textbf{(M) Monotonicity:} For any $U$  and $U' \in \mathbb{L}_{t+1}^1$ such that $U \leq U'$, we have
\begin{equation*}
\rho_{t} (U) \leq \rho_{t} (U'), \quad \text{a.s.}
\end{equation*}
\item \textbf{(C) Convexity:} For any $U$ and $U' \in \mathbb{L}_{t+1}^1$, for any $\alpha \in [0,1]$, we have 
\begin{equation*}
\rho_{t} (\alpha U + (1-\alpha)U' ) \leq \alpha \rho_{t} (U) + (1-\alpha) \rho_{t}(U'), \quad \text{ a.s.}
\end{equation*}
\item \textbf{(TI) Translation Invariance:} For any $U \in \mathbb{L}_{t+1}^1$ and for any $V \in \mathbb{L}_{t}^1$, we have
\begin{equation*}
\rho_{t} (U + V ) = \rho_{t} (U ) + V, \quad \text{a.s.}
\end{equation*}
\item \textbf{(PH) Positive Homogeneity:} For any $\alpha \geq 0$, for any $U \in \mathbb{L}_{t+1}^1$, we have
\begin{equation*}
\rho_{t} (\alpha U ) = \alpha\rho_{t} (U), \quad \text{a.s.}
\end{equation*}
\end{itemize}
\end{dfntn}
Quoting \cite{Rus-risk-2010}, the condional risk mapping $\rho_t(U_{t+1})$ can be interpreted as a fair one-time $\mathcal{F}_t$-measurable charge we would be willing to incur at time $t$ instead of the random futur cost $U_{t+1}$.

We fix now a family of {\em one-step conditional
risk mapping}  $(\rho_{t})_{t\in \T}$, $\rho_{t} \colon \mathbb{L}_{t+1}^1 \to \mathbb{L}_{t}^1$, defined by
\begin{equation} \label{riskmeasure}
\rho_{t}(U_{t+1})(x_0,y_{[t-1]}) =
\sup_{Z \in \mathcal{Z}_t} \int_{\Omega} U_{t+1}(x_0,y_{[t-1]},Y_t(\omega)) Z(Y_t(\omega)) \dd \mathbb{P}(\omega),
\end{equation}
where the random variables $U_{t+1}$ and $\rho_t(U_{t+1})$ are explicitly represented as measurable functions of $(x_0,y_{[t]}) \in \mathbb{R}^{(t+2)d}$ and $(x_0,y_{[t-1]}) \in \mathbb{R}^{(t+1)d}$, respectively. Recalling the definition of $\mathcal{M}_t$ \eqref{eq:mt}, we have 
\begin{equation} \nonumber
\rho_{t}(U_{t+1})(x_0,y_{[t-1]}) =
\sup_{\xi \in \mathcal{M}_t} \int_{\mathbb{R}^d} U_{t+1}(x_0,y_{[t-1]},y_t)  \dd \xi(y_t).
\end{equation}
We set
\begin{equation} \nonumber
\mathcal{Q}_{t+1} := \left\{Q = Z(Y_{t}) \text{ a.s.}, \, Z \in \mathcal{Z}_t\right\}
\end{equation}
so that $\rho_t$ can be expressed in the following form:
\begin{equation} \nonumber
\rho_{t}(U_{t+1})  = \sup_{Q_{t+1} \in \mathcal{Q}_{t+1}} \mathbb{E} \left[U_{t+1} Q_{t+1} \vert \mathcal{F}_t \right].
\end{equation}

Finally we construct the associated composite risk measure $\rho \colon  \mathbb{L}_{T}^1  \to \mathbb{R}$,
\begin{equation} \nonumber
\rho(U) := \mathbb{E} \left[ \rho_{0}  \circ \cdots  \circ  \rho_{ T-1}(U) \right],
\end{equation}
which also satisfies (\textbf{M}), (\textbf{C}), (\textbf{TI}), and (\textbf{PH}).

\begin{rmrk}
Given a probability space $(\Omega',\mathcal{F}',\mathbb{P}')$ and given $\alpha \in (0,1]$, the conditional value at risk (also called expected shortfall or average value at risk) of a random variable $U \in L^1(\Omega',\mathcal{F}',\mathbb{P}')$ is defined by
\begin{equation*}
\cvar_{\alpha}(U):= \inf_{W \in L^1(\Omega',\mathcal{F}',\mathbb{P}')}
W + \alpha^{-1} \mathbb{E} \left[ (U-W)_{+} \right],
\end{equation*}
where $x_+= \max \{ 0, x \}$ denotes the positive part of any $x \in \mathbb{R}$. It has the following dual representation (see \cite[Lemma 4.51 and Theorem 4.52]{fol11}):
\begin{equation*}
\cvar_{\alpha}(U)= \sup \left\{
\mathbb{E} \left[ UZ \right] \,\Big|\,
Z \in L^\infty(\Omega',\mathbb{F}',\mathbb{P}'),\,
Z \in [0,\alpha^{-1}] \text{ a.s.},\,
\mathbb{E}[Z]= 1
\right\}.
\end{equation*}
Therefore, a natural extension of the conditional value at risk to the framework of the article is given by
\begin{equation} \nonumber
\rho_t(U_{t+1}) = \sup_{Z \in \mathcal{Z}_{t}} \mathbb{E}\left[ U_{t+1} Z(Y_t)  \vert \mathcal{F}_t \right],
\end{equation}
where
\begin{equation} \nonumber
\mathcal{Z}_{t} := \left\{ Z \in L^{\infty}(\mathbb{R}^d) \,\Big|\,
Z \in [0,\alpha^{-1}] \text{ a.e.},\,
\int_{\mathbb{R}^d} Z(y) \dd \nu(t,y) = 1 \right\}.
\end{equation}
This particular definition of $\mathcal{Z}_t$ satisfies Assumption \ref{Z}.
We refer to \cite[Definition 11.8]{fol11} and \cite[Subsection 2.3.1]{che11} for extensions of the conditional value at risk to general filtrations in a discrete-time setting.
\end{rmrk}

\begin{rmrk}
The risk measure that we have constructed does not have the most general structure possible.
In our setting, the sets $\mathcal{M}_t$ are fixed. In \cite{Rus-risk-2010}, these sets depend on the current state and control (see in particular Sections 4 and 5). In this more general context, it is still possible to derive a dynamic programming principle for the underlying optimal control problem (see \cite[Theorem 2]{Rus-risk-2010}).
However, the convexity of the value function, which plays an important role in our analysis, is lost in such a setting.
\end{rmrk}

\subsubsection*{Control problem}

We consider the following set of controls for any $t\in \T$,
\begin{equation} \nonumber
\mathcal{A}_t = \mathbb{L}^2_t(\Omega,\mathbb{R}^d), \qquad \mathcal{A} := \mathcal{A}_0 \times \cdots \times \mathcal{A}_{T-1}.
\end{equation}
Given a control $A \in \mathcal{A}$, the evolution of the state of the representative player is given by
\begin{equation} \label{state-eq} \tag{\textit{C}}
X_{t+1} = X_t + A_t + Y_t, \quad \forall t \in \T.
\end{equation}
The initial condition is the random variable $X_0$ fixed previously.
Will call the the variable $(X_t)_{t \in \TT}$ associated state with $A$.
In the notation, we do not make explicit the dependence of $(X_t)_{t \in \TT}$ with respect to $A$, which is always clear from the context. Note that by induction, $X_t \in \mathbb{L}^2_{t}(\Omega,\mathbb{R}^d)$ for any $t \in \TT$.

For a given belief $b \in \mathcal{B}_2$, the risk averse multistage cost of the representative agent is given by
\begin{equation} \label{cost-J}
\mathcal{J}(A,b) :=   \rho \left( \sum_{t = 0}^{T-1}  \ell (t, X_t , A_t, b)  +F(T,X_T, b) \right).
\end{equation}
The corresponding problem is
\begin{equation} \label{risk-pb} \tag{\textit{P}}
\inf_{A \in \mathcal{A}} \mathcal{J}(A,b).
\end{equation}

In what follows, we show how equations (\ref{system},i) and (\ref{system},ii) allow to characterize the unique solution to \eqref{risk-pb}.
Let us recall that $b$ is fixed in this subsection.
Let us denote by $u \in (\mathcal{G}_2)^{T+1}$ the solution to (\ref{system},i) and let us denote by $\alpha \in (\mathcal{G}_1 \cap 1\text{-Lip})^T$ the solution to (\ref{system},ii).
The existence and uniqueness of these solutions will be independently established in  Lemma \ref{u-alpha-prox-lemma} and Lemma \ref{alpha-u-in-G1-G2}.

\begin{lmm} \label{lemma:def_bar_a}
There exists a unique control $\bar{A} \in \mathcal{A}$ with associated state $\bar{X}$ such that for all $t \in \mathcal{T}$,
\begin{equation} \label{open-loop}
\bar{A}_t = \alpha_t(\bar{X}_t), \quad \text{a.s.}
\end{equation}
\end{lmm}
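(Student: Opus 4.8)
The plan is to construct the pair $(\bar{A},\bar{X})$ by forward induction on $t \in \TT$, following the recursion dictated by the state equation \eqref{state-eq} together with the feedback law \eqref{open-loop}, and then to deduce uniqueness from the very same recursion. Concretely, I would set $\bar{X}_0 := X_0$ and, for each $t \in \T$, define successively
\begin{equation} \nonumber
\bar{A}_t := \alpha_t(\bar{X}_t), \qquad \bar{X}_{t+1} := \bar{X}_t + \bar{A}_t + Y_t.
\end{equation}
The whole content of the lemma is then to verify that each $\bar{A}_t$ indeed belongs to $\mathcal{A}_t = \mathbb{L}^2_t(\Omega,\mathbb{R}^d)$, i.e. that the natural $\mathcal{F}_t$-measurability and the square integrability are preserved along the recursion.

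For measurability, I would invoke the regularity of the feedback: since $\alpha_t \in \mathcal{G}_1 \cap \lip$ is in particular continuous and $\bar{X}_t$ is $\mathcal{F}_t$-measurable by the induction hypothesis, the composition $\bar{A}_t = \alpha_t(\bar{X}_t)$ is $\mathcal{F}_t$-measurable. Consequently $\bar{X}_{t+1} = \bar{X}_t + \bar{A}_t + Y_t$ is measurable with respect to $\mathcal{F}_{t+1} = \sigma(X_0,Y_{[t]})$, which closes the measurability induction, the base case $\bar{X}_0 = X_0$ being $\mathcal{F}_0$-measurable by definition of the filtration. For integrability, I would use the growth bound attached to $\mathcal{G}_1$: there is $C>0$ with $|\alpha_t(x)| \leq C(1+|x|)$, so that $|\bar{A}_t| \leq C(1+|\bar{X}_t|)$ and hence $\bar{A}_t \in \mathbb{L}^2_t$ as soon as $\bar{X}_t \in \mathbb{L}^2_t$. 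Since $\bar{X}_0 = X_0 \in \mathbb{L}^2_0$ (because $\bar{m} \in \mathcal{P}_2^C(\mathbb{R}^d)$ by Assumption \ref{A}) and $Y_t \in \mathbb{L}^2_{t+1}$ (because $\nu(t) \in \mathcal{P}_2^C(\mathbb{R}^d)$), the identity $\bar{X}_{t+1} = \bar{X}_t + \bar{A}_t + Y_t$ propagates square integrability, completing the induction and establishing existence.

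Uniqueness follows from the deterministic nature of the recursion once $X_0$ and the noises $(Y_t)_{t\in\T}$ are fixed. Suppose $(\bar{A},\bar{X})$ and $(\tilde{A},\tilde{X})$ both satisfy \eqref{open-loop} with the common initial condition $\bar{X}_0 = \tilde{X}_0 = X_0$. Arguing by induction, if $\bar{X}_t = \tilde{X}_t$ a.s., then $\bar{A}_t = \alpha_t(\bar{X}_t) = \alpha_t(\tilde{X}_t) = \tilde{A}_t$ a.s., and substituting into \eqref{state-eq} yields $\bar{X}_{t+1} = \tilde{X}_{t+1}$ a.s.; hence the two trajectories coincide for all $t \in \TT$. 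The argument is essentially routine, and I do not anticipate a genuine obstacle; the only point requiring a little care is the simultaneous bookkeeping of adaptedness—in particular that $\bar{X}_{t+1}$ is $\mathcal{F}_{t+1}$-measurable, which relies on $Y_t$ being $\mathcal{F}_{t+1}$-measurable—and of the moment bounds, so that the induction genuinely lands in $\mathcal{A}_t$ at each stage rather than in a larger, less integrable space.
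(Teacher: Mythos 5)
Your proposal is correct and follows essentially the same route as the paper's proof: construct $\bar{X}$ via the closed-loop recursion, propagate $\mathcal{F}_t$-measurability and second-order moment bounds by forward induction using the linear growth of $\alpha_t$, and obtain uniqueness by induction on the same recursion. The only cosmetic difference is that you invoke the $\mathcal{G}_1$ growth bound directly where the paper derives linear growth from the Lipschitz property of $\alpha_t$; these are interchangeable here.
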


\begin{proof}
Let $(\bar{X}_t)_{t \in \TT}$ be the solution to the closed-loop system
\begin{equation}  \label{closed-loop}
\bar{X}_{t+1}= \bar{X}_t + \alpha_t(\bar{X}_t) + Y_t, \quad \forall t \in \mathcal{T}.
\end{equation}
It is easy to verify by induction that for all $t \in \bar{\mathcal{T}}$, the random variable $\bar{X}_t$ is $\mathcal{F}_t$-measurable and has a bounded second-order moment. Indeed, $\alpha_t$ is Lipschitz-continuous, thus has a linear growth; therefore, if $\bar{X}_t$ has a bounded second-order moment, then $\alpha_t(\bar{X}_t)$ also has a bounded second-order moment.
We define now $\bar{A}$ by
\begin{equation} \label{open-loop2}
\bar{A}_t = \alpha_t(\bar{X}_t).
\end{equation}
Since $\bar{X}_t$ is adapted to $\mathcal{F}_t$, we also have that $\bar{A}_t$ is $\mathcal{F}_t$-measurable. As we already pointed out, $\alpha_t(\bar{X}_t)$ has a bounded second-order moment.
This proves that $\bar{A}\in \mathcal{A}$.
Finally, it is clear that by \eqref{closed-loop} and \eqref{open-loop2}, the pair $(\bar{A},\bar{X})$ satisfies the state equation \eqref{state-eq}.

Let us justify the uniqueness of $\bar{A}$. Let $\tilde{A} \in \mathcal{A}$ be such that $\tilde{A}_t= \alpha_t(\tilde{X}_t)$, where $\tilde{X}$ is the associated state. Then, $\tilde{X}$ is a solution to the closed-loop system \eqref{closed-loop}. Therefore $\tilde{X}= \bar{X}$ and finally $\tilde{A}_t= \alpha_t(\tilde{X}_t)= \alpha_t(\bar{X}_t)= \bar{A}_t$.
The lemma is proved.
\end{proof}

The following proposition states the optimality of the control $\bar{A}$.

\begin{prpstn} \label{proposition:dyn_prog}
We have
\begin{equation} \label{eq:value_opt}
\inf_{A \in \mathcal{A}} \mathcal{J}(A,b)
=  \mathbb{E}\left[ u(0,X_0) \right] = \int_{\mathbb{R}^d}u(0,x) \dd m(0,x),
\end{equation}
where $u$ solves the dynamic programming equation {\normalfont(\ref{system},i)}. Moreover, the control $\bar{A}$ defined in Lemma \ref{lemma:def_bar_a} is the unique solution to Problem \eqref{risk-pb}.
\end{prpstn}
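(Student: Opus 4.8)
The plan is to read (\ref{system},i)--(\ref{system},ii) as a verification (dynamic programming) result: the value function $u$ provides a pointwise lower bound on the cost-to-go of every admissible control, and this bound is attained exactly along the feedback control $\bar{A}$ of Lemma \ref{lemma:def_bar_a}. For $A \in \mathcal{A}$ with associated state $X$, I would introduce the tail cost
\[
J_t(A) := \rho_t \circ \cdots \circ \rho_{T-1}\left( \sum_{s=t}^{T-1} \ell(s,X_s,A_s,b) + F(T,X_T,b) \right) \in \mathbb{L}^1_t ,
\]
so that $\mathcal{J}(A,b) = \mathbb{E}[J_0(A)]$. Since $\ell(t,X_t,A_t,b)$ is $\mathcal{F}_t$-measurable, the translation invariance \textbf{(TI)} yields the one-step recursion $J_t(A) = \ell(t,X_t,A_t,b) + \rho_t(J_{t+1}(A))$, which drives the whole induction.

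First I would show by backward induction on $t$ that $J_t(A) \ge u(t,X_t)$ almost surely, for every $A \in \mathcal{A}$. The case $t=T$ is the terminal identity $J_T(A) = F(T,X_T,b) = u(T,X_T)$. For the induction step, the monotonicity \textbf{(M)} of $\rho_t$ together with the induction hypothesis gives $\rho_t(J_{t+1}(A)) \ge \rho_t(u(t+1,X_{t+1}))$; since $X_{t+1} = X_t + A_t + Y_t$ with $X_t + A_t$ being $\mathcal{F}_t$-measurable, the explicit form \eqref{riskmeasure} of $\rho_t$ rewrites this conditional risk as $\sup_{\xi \in \mathcal{M}_t} \int_{\mathbb{R}^d} u(t+1,X_t + A_t + y)\,\dd\xi(y)$, and the Bellman inequality (\ref{system},i) closes the step. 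Running the same computation along $\bar{A}$, where $\bar{A}_t = \alpha_t(\bar{X}_t)$ attains the $\argmin$ by (\ref{system},ii), turns every inequality into an equality, so that $J_t(\bar{A}) = u(t,\bar{X}_t)$ for all $t \in \TT$. Taking $t=0$ and the expectation gives $\mathcal{J}(A,b) \ge \mathbb{E}[u(0,X_0)] = \mathcal{J}(\bar{A},b)$, while $\mathbb{E}[u(0,X_0)] = \int_{\mathbb{R}^d} u(0,x)\,\dd m(0,x)$ because $\mathcal{L}(X_0) = \bar{m} = m(0)$. This establishes \eqref{eq:value_opt} and the optimality of $\bar{A}$.

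For uniqueness I would argue that $A \mapsto \mathcal{J}(A,b)$ is strictly convex on $\mathcal{A}$. As the state is affine in $A$ and $F(t,\cdot,b)$ is convex (Assumption \ref{B}), the integrand $W(A) := \sum_{t} \ell(t,X_t,A_t,b) + F(T,X_T,b)$ is pathwise strictly convex in $A$, with the quantitative estimate $W(\lambda A + (1-\lambda)A') \le \lambda W(A) + (1-\lambda) W(A') - \tfrac{1}{2}\lambda(1-\lambda)\sum_t |A_t - A_t'|^2$ coming from the terms $\tfrac12|A_t|^2$. Combining monotonicity \textbf{(M)} and convexity \textbf{(C)} of $\rho$ (equivalently, the dual representation $\mathcal{J}(A,b) = \sup_{Z_t \in \mathcal{Z}_t}\mathbb{E}[W(A)\prod_t Z_t(Y_t)]$ obtained by interchanging the one-step suprema) would pass this to
\[
\mathcal{J}(\lambda A + (1-\lambda)A',b) \le \lambda \mathcal{J}(A,b) + (1-\lambda)\mathcal{J}(A',b) - \tfrac12\lambda(1-\lambda)\,\mathbb{E}\Big[ Q \sum_t |A_t - A_t'|^2 \Big]
\]
for a worst-case weighting $Q \ge 0$, and the proof would reduce to showing that this last term is strictly positive whenever $A \neq A'$. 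Here I would invoke the lower bound $Z' \ge 1/C$ of Assumption \ref{Z}, which forces every admissible weighting to charge the region $\{A \neq A'\}$; strict convexity then makes the minimizer unique, and since $\bar{A}$ is a minimizer it is \emph{the} unique solution.

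The delicate point, and the step I expect to be the main obstacle, is precisely this last one. The composite risk measure is only weakly monotone, not strictly so (conditional value at risk, and a fortiori the essential supremum, are insensitive to improvements on their non-binding side), so the pathwise strict convexity of $W$ does not automatically survive the application of $\rho$: a priori the worst-case weighting $Q$ can vanish exactly on $\{A \neq A'\}$. Making the lower bound $Z' \ge 1/C$ do its job, for instance through a convex-combination perturbation $(1-\theta)\xi^{\star} + \theta\,(Z'\,\nu(t))$ of the worst-case measure that balances the loss of Bellman optimality against the gain of full support, is the crux of the uniqueness argument and is where I would expect the real work to lie.
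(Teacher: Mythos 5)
Your treatment of the value and of the optimality of $\bar{A}$ is correct and is essentially the paper's own argument: the paper uses translation invariance (\textbf{TI}) to put the problem in the nested form \eqref{risk-pb-nested} (following \cite{Rus-risk-2010}) and solves the stage subproblems from the inside out, which is the same backward induction as your tail-cost recursion $J_t(A)=\ell(t,X_t,A_t,b)+\rho_t(J_{t+1}(A))$, with monotonicity (\textbf{M}) giving $J_t(A)\geq u(t,X_t)$ a.s.\ and the feedback $\bar{A}_t=\alpha_t(\bar{X}_t)$ saturating every inequality.

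The gap is in the uniqueness part, exactly where you said the real work would lie. The paper does not prove (and does not need) strict convexity of $A\mapsto\mathcal{J}(A,b)$. Its argument is stage-wise: for fixed $x$, the map $a\mapsto\tfrac12|a|^2+\langle a,P(t,b)\rangle+\bar{u}(t+1,x+a)$ is $1$-strongly convex (a quadratic plus the convex function $\bar{u}(t+1,\cdot)$), so each subproblem \eqref{eq:subproblem} has the a.s.\ unique solution $A_t=\alpha_t(X_t)$ --- this is the single-valuedness of the proximal operator exploited in Lemma \ref{u-alpha-prox-lemma} --- and the paper concludes that any optimal control agrees with the feedback at every stage, whence $A=\bar{A}$ by Lemma \ref{lemma:def_bar_a}. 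Your route through global strict convexity of $\mathcal{J}$ cannot be completed, for precisely the reason you flag: the one-step risk measures are positively homogeneous and only weakly monotone, so a worst-case density may vanish on an event of positive probability, and along control perturbations supported on such an event $\mathcal{J}$ is constant; hence $\mathcal{J}$ is genuinely not strictly convex under the stated assumptions. Your proposed repair does not close this: Assumption \ref{Z} supplies a \emph{single} density $Z'\geq 1/C$, whereas the strong monotonicity $\rho_t(U)\geq\rho_t(V)+\tfrac1C\,\mathbb{E}[U-V\,\vert\,\mathcal{F}_t]$ for $U\geq V$ (which is what would make your term $\mathbb{E}\big[Q\sum_t|A_t-A_t'|^2\big]$ strictly positive) requires \emph{every} $Z\in\mathcal{Z}_t$ to be bounded below by $1/C$; with only one such $Z'$ the bound $\rho_t(U)\geq\mathbb{E}[UZ'\vert\mathcal{F}_t]$ compares against $\mathbb{E}[VZ'\vert\mathcal{F}_t]$, not against $\rho_t(V)$, and the argument stalls.

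A final remark, in your favor: the obstruction you identified does not vanish in the paper's stage-wise route either. Monotonicity cleanly forces $A_0=\alpha_0(X_0)$ a.s.\ (the pointwise Bellman inequality is saturated in expectation, hence a.s.), but to force $A_t$ for $t\geq1$ one must pass from $\rho_0(J_1(A))=\rho_0(u(1,X_1))$ together with $J_1(A)\geq u(1,X_1)$ to $J_1(A)=u(1,X_1)$ a.s., and this implication fails for general $\mathcal{Z}_t$ satisfying Assumption \ref{Z} (for instance $\cvar$-type sets, whose worst-case densities vanish on non-binding events). This is the step the paper compresses into ``proceeding iteratively''; it goes through under the stronger hypothesis that all densities in $\mathcal{Z}_t$ are bounded below by $1/C$, but not from monotonicity alone. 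So your instinct located a real subtlety; the flaw in your proposal is that you attacked it with global strict convexity of $\mathcal{J}$, which is false, rather than with the stage-wise strong convexity in $a$ that the paper uses.
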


\begin{proof}
The proof is directly adapted from \cite[Theorem 2]{Rus-risk-2010}.
As a consequence of the translation invariance property (\textbf{TI}), the problem (\ref{risk-pb}) can be expressed in a nested form
\begin{align}
\inf_{A \in \mathcal{A}} \mathcal{J}(A,b)
= & \mathbb{E}\bigg[ \inf_{A_{0} \in \mathcal{A}_0} \ell (0, X_0 , A_0, b) +  \rho_{0} \bigg( \inf_{A_{1} \in \mathcal{A}_1}  \ell (1, X_1, A_1, b) + \cdots   
\notag \\
& \qquad +  \rho_{T-2}  \bigg( \inf_{A_{T-1}\in \mathcal{A}_{T-1}} \ell (T-1,X_{T-1}, A_{T-1}, b) + \rho_{T-1} \bigg( 
F(T,X_T, b)  \bigg)\bigg) \cdots \bigg)\bigg]. \label{risk-pb-nested}
\end{align}
By (\ref{system},i), we have $u(T,X_T)= F(T,X_T,b)$ almost surely.
We also have $X_T= X_{T-1} + A_{T-1} + Y_{T-1}$, as a consequence of the state equation \eqref{state-eq}.
Therefore, the innermost subproblem in \eqref{risk-pb-nested} is given by
\begin{equation} \label{eq:subproblem}
\inf_{A_{T-1}\in \mathcal{A}_{T-1}}   \ell (T-1,X_{T-1}, A_{T-1}, b) + \rho_{T-1} (u(T,X_{T-1} + A_{T-1} + Y_{T-1})).
\end{equation}
Since $X_{T-1}, A_{T-1} \in \mathcal{F}_{T-1}$, the unique solution to subproblem  \eqref{eq:subproblem} 
is $A_{T-1}= \alpha_{T-1}(X_{T-1})$.
Moreover, the value of subproblem \eqref{eq:subproblem} is $u(T-1,X_{T-1})$.
Proceeding iteratively for all times $t \in \mathcal{T}$, we conclude that \eqref{eq:value_opt} holds and that any solution $A$ to problem \eqref{risk-pb} with associated state $X$ satisfies $A_t= \alpha_t(X_t)$. Therefore, by Lemma \ref{lemma:def_bar_a}, $\bar{A}$ is the unique solution to \eqref{risk-pb}. The proof is complete.
\end{proof}

\subsection{Kolmogorov equation \label{3.2}}
\begin{lmm}
Let $\alpha \colon \T \times \mR^d \to \mR^d$ be a continuous vector field. Suppose that the state equation \eqref{state-eq} is of the feedback form
\begin{equation} \nonumber
X_{t+1} = X_t + \alpha_t(X_t) + Y_t.
\end{equation}
 Then for any $t\in \TT$, $m(t) = \mathcal{L}(X_t) \in \mathcal{P}(\mathbb{R}^d)$ is characterized by the Kolmogorov equation {\normalfont(\ref{system},\text{iv})}.
\end{lmm}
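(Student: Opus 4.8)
The plan is to prove the identity by induction on $t \in \TT$, characterizing each measure $m(t) = \mathcal{L}(X_t)$ through its action on bounded Borel test functions and then matching the resulting integrals to the definitions of the push-forward \eqref{push-forward} and of the convolution \eqref{convolution-measure}. Note first that $m(t)$ is automatically a probability measure, being the law of the random variable $X_t$, and that the continuity of $\alpha_t$ guarantees that $id + \alpha_t$ is Borel measurable, so that the push-forward $(id + \alpha_t)\sharp m(t)$ is well defined. The base case $t = 0$ is immediate, since $\mathcal{L}(X_0) = \bar m$ is exactly the initial condition of the Kolmogorov equation (\ref{system},iii).

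For the inductive step I would fix $t \in \T$, assume $\mathcal{L}(X_t) = m(t)$, and rewrite the feedback dynamics as $X_{t+1} = (id + \alpha_t)(X_t) + Y_t$. The decisive structural observation is that $X_t$ is $\mathcal{F}_t$-measurable, hence so is $(id + \alpha_t)(X_t)$, whereas $Y_t$ is independent of $\mathcal{F}_t$ by construction of the filtration; consequently $(id + \alpha_t)(X_t)$ and $Y_t$ are independent. For an arbitrary bounded Borel map $h \colon \mathbb{R}^d \to \mathbb{R}$, independence together with Fubini's theorem then yields
\begin{equation} \nonumber
\mathbb{E}\left[ h(X_{t+1}) \right] = \int_{\mathbb{R}^d} \int_{\mathbb{R}^d} h\big( (id + \alpha_t)(x) + y \big) \dd \nu(t,y) \dd m(t,x).
\end{equation}

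To conclude I would read off the two operators from this double integral. For each fixed $y$, the inner integrand $x \mapsto h\big((id + \alpha_t)(x) + y\big)$ is the composition of $h(\,\cdot + y)$ with $id + \alpha_t$, so the push-forward definition \eqref{push-forward} rewrites the $x$-integral as an integral of $z \mapsto h(z + y)$ against $(id + \alpha_t)\sharp m(t)$. The remaining double integral is then exactly the defining expression \eqref{convolution-measure} of the convolution $\nu(t) \ast \big[(id + \alpha_t)\sharp m(t)\big]$. Since this holds for every bounded Borel $h$, the measures coincide, giving $\mathcal{L}(X_{t+1}) = \nu(t) \ast \big[(id + \alpha_t)\sharp m(t)\big]$ and closing the induction. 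The argument is essentially a bookkeeping of definitions; the only genuine point to get right is the independence/measurability step, namely that $(id + \alpha_t)(X_t)$ is $\mathcal{F}_t$-measurable while $Y_t$ is independent of $\mathcal{F}_t$, which is precisely what legitimizes the factorization via Fubini. Everything else is a direct transcription of \eqref{push-forward} and \eqref{convolution-measure}.
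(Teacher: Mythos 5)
Your proposal is correct and follows essentially the same route as the paper's proof: characterize the law through bounded Borel test functions, use the independence of $X_t$ and $Y_t$ (via Fubini) to factor the expectation into a double integral, and then transcribe the definitions \eqref{push-forward} and \eqref{convolution-measure} to identify the resulting measure. Your explicit induction and the remark on $\mathcal{F}_t$-measurability merely make precise what the paper leaves implicit; the core argument is identical.
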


\begin{proof}
Let $\phi$ be a bounded Borel test function. For any $t\in \T$, by independence of $X_t$ and $Y_t$ we have
\begin{align} \nonumber
\mathbb{E} \left[ \phi \left( X_{t+1}\right)  \right] & = \mathbb{E} \left[ \phi \left(X_t + \alpha_t(X_t) + Y_t \right) \right]\\
& =\int_{\mathbb{R}^d}  \int_{\mathbb{R}^d} \phi(x + \alpha_t(x) + y ) \dd m(t,x) \dd \nu(t,y). \nonumber
\end{align}
By definition of the push-forward \eqref{push-forward} we obtain
\begin{equation} \nonumber
 \int_{\mathbb{R}^d} \phi(x + \alpha_t(x) + y ) \dd m(t,x)
 =  \int_{\mathbb{R}^d} \phi( z + y ) \dd (id + \alpha_t)\sharp m(t,z).
 \end{equation}
By definition of convolution \eqref{convolution-measure} we have
\begin{align} \nonumber
\int_{\mathbb{R}^d} \int_{\mathbb{R}^d} \phi(z + y ) \dd \nu(t,y) \dd (id + \alpha_t)\sharp m(t,z) =  \int_{\mathbb{R}^d} \phi(x)  \dd \left(\nu(t) \ast \left[(id + \alpha_t)\sharp m(t) \right]\right)(x),
\end{align}
as was to be proved.
\end{proof}

\section{Technical lemmas \label{toolbox}}

This section contains independent technical lemmas. The reader only interested in the main results of the article can skip it.

\begin{lmm} \label{convol-p1}
Let $p\in [1,+\infty)$ and let $C>0$. For any $m_1$ and $m_2$ in $\mathcal{P}^C_p(\mathbb{R}^d)$, the probability measure $m_1 \ast m_2$ lies in $\mathcal{P}^{2^{p}C}_p(\mathbb{R}^d)$. In addition, given $m_0 \in \mathcal{P}_p^C(\mathbb{R}^d)$, the mapping $\mathcal{P}^C_p(\mathbb{R}^d) \ni m \mapsto m_0 \ast m $ is non-expansive for the distance $d_1$.
\end{lmm}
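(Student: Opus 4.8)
The statement splits into two independent claims, which I would treat separately, both via the defining identity \eqref{convolution-measure} for the convolution.

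For the moment bound, the plan is to apply \eqref{convolution-measure} with the test function $h = |\cdot|^p$, obtaining
\[
\int_{\mR^d} |x|^p \dd(m_1 \ast m_2)(x) = \int_{\mR^d}\int_{\mR^d} |y+z|^p \dd m_1(y) \dd m_2(z).
\]
The key elementary estimate is the convexity inequality $(a+b)^p \leq 2^{p-1}(a^p + b^p)$ for $a,b \geq 0$ and $p \geq 1$, which combined with the triangle inequality yields $|y+z|^p \leq 2^{p-1}(|y|^p + |z|^p)$. Integrating this bound and using $m_1, m_2 \in \mathcal{P}_p^C(\mR^d)$ gives $\int |x|^p \dd(m_1 \ast m_2) \leq 2^{p-1}(C + C) = 2^p C$, which is precisely the assertion $m_1 \ast m_2 \in \mathcal{P}_p^{2^p C}(\mR^d)$.

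For the non-expansiveness, fix $\phi \in 1\text{-Lip}$. Since $\int \phi \dd(\mu - \nu)$ is unchanged under $\phi \mapsto \phi + \mathrm{const}$ (the signed measure $\mu - \nu$ having zero total mass), I may assume $\phi(0) = 0$, so that $|\phi(x)| \leq |x|$. Applying \eqref{convolution-measure} to $m_0 \ast m$ and to $m_0 \ast m'$, subtracting, and exchanging the order of integration by Fubini gives
\[
\int_{\mR^d} \phi \dd(m_0 \ast m - m_0 \ast m') = \int_{\mR^d} \left( \int_{\mR^d} \phi(y + z) \dd (m - m')(z) \right) \dd m_0(y).
\]
For each fixed $y$, the function $z \mapsto \phi(y+z)$ is again $1$-Lipschitz, so the inner integral is bounded above by $d_1(m, m')$ by the very definition of $d_1$. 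As $m_0$ is a probability measure, integrating this uniform bound over $y$ yields $\int \phi \dd(m_0 \ast m - m_0 \ast m') \leq d_1(m, m')$; taking the supremum over $\phi \in 1\text{-Lip}$ produces $d_1(m_0 \ast m, m_0 \ast m') \leq d_1(m, m')$.

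The only genuine obstacle is that identity \eqref{convolution-measure} is stated for \emph{bounded} Borel maps, whereas I apply it to the unbounded functions $|\cdot|^p$ and $\phi$. This I would justify by truncation: apply \eqref{convolution-measure} to the bounded functions $\min(|\cdot|^p, k)$ and pass to the limit by monotone convergence for the moment bound, and approximate $\phi$ by bounded $1$-Lipschitz functions, dominated by the integrable majorant $|x|$, for the non-expansiveness. The finiteness of all relevant integrals, and hence the legitimacy of Fubini's theorem, follows from $p \geq 1$, which guarantees finite first moments for the measures involved.
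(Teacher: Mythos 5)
Your proof is correct and follows essentially the same route as the paper's: the moment bound via the convexity inequality $|y+z|^p \leq 2^{p-1}(|y|^p+|z|^p)$, and non-expansiveness via Fubini together with the fact that translation preserves the $1$-Lipschitz property (the paper merely integrates in the opposite order, viewing $z \mapsto \int \phi(y+z)\,\dd m_0(y)$ as a single non-expansive test function, which is a cosmetic difference). Your truncation argument addressing that \eqref{convolution-measure} is stated only for bounded maps is a point of rigor the paper passes over silently, but it does not change the approach.
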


\begin{proof}
Let $m_1$ and $m_2$ in $\mathcal{P}_p^C(\mathbb{R}^d)$. We have
\begin{align*}
\int_{\mathbb{R}^d} |x|^p \dd (m_1 \ast m_2) (x) & = \int_{\mathbb{R}^d} \int_{\mathbb{R}^d}  |y+z|^p \dd m_1(y) \dd m_2(z) \\
& \leq  \int_{\mathbb{R}^d} \int_{\mathbb{R}^d}   2^{p-1}(|y|^p+|z|^p) \dd m_1(y)\dd m_2(z) \leq 2^{p}C.
\end{align*}
Thus $m_1 \ast m_2 \in \mathcal{P}^{2^{p}C}_p(\mathbb{R}^d)$. Moreover, given $m_0 \in \mathcal{P}_p^C(\mathbb{R}^d)$, we have
\begin{align*}
 d_1(m_0 \ast m_1, m_0 \ast m_2) & = \sup_{\phi \in 1\mathrm{-Lip}} \int_{\mathbb{R}^d} \phi(x) \dd (m_0 \ast m_1 - m_0 \ast m_2)(x) \\
 & = \sup_{\phi \in 1\mathrm{-Lip}} \int_{\mathbb{R}^d}\left(\int_{\mathbb{R}^d} \phi(y+z)\dd m_0(y) \right) \dd (m_1 - m_2)(z).
\end{align*}
Since the mapping $z \mapsto \int_{\mathbb{R}^d} \phi(y+z)\dd m_0(y)$ is non-expansive, we further obtain that
\begin{equation} \nonumber
d_1(m_0 \ast m_1, m_0 \ast m_2)  \leq d_1(m_1,m_2),
\end{equation}
which concludes the proof.
\end{proof}

\begin{lmm} \label{g-sharp-m}
Let $p\in [1,+\infty)$ and let $C>0$.  For any $m \in \mathcal{P}^C_p(\mathbb{R}^d)$ and for any Borel map $g \in \mathcal{G}_1^C$, the probability measure $g \sharp m$ lies in $\mathcal{P}^{q}_p(\mathbb{R}^d)$, with $q = 2^{p-1} C^p(1 + C)$. In addition, the inequality
\begin{equation} \label{d1mg}
d_1(g_1 \sharp m_1,g_2 \sharp m_2) \leq (1+C)\|g_1-g_2\|_{\mathcal{G},1} + Cd_1(m_1,m_2)
\end{equation}
holds for any $m_1$ and $m_2$ in $\mathcal{P}^C_p(\mathbb{R}^d)$ and for any Borel maps $g_1$ and $g_2$ in $\mathcal{G}_1^C \cap C\mathrm{-Lip}$.
\end{lmm}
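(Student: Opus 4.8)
The plan is to handle the two assertions separately, each reducing to the push-forward identity \eqref{push-forward}. For the moment bound, I would apply \eqref{push-forward} with the (unbounded, but this is only a technical nuisance) test map $y \mapsto |y|^p$ to get $\int_{\mathbb{R}^d} |y|^p \dd (g \sharp m)(y) = \int_{\mathbb{R}^d} |g(x)|^p \dd m(x)$. Since $g \in \mathcal{G}_1^C$ means $|g(x)| \leq C(1+|x|)$, the elementary convexity inequality $(a+b)^p \leq 2^{p-1}(a^p+b^p)$ gives $|g(x)|^p \leq 2^{p-1}C^p(1+|x|^p)$; integrating against $m \in \mathcal{P}^C_p(\mathbb{R}^d)$ and using $\int_{\mathbb{R}^d} |x|^p \dd m \leq C$ produces the bound $2^{p-1}C^p(1+C) = q$, which is exactly the claimed membership $g \sharp m \in \mathcal{P}^q_p(\mathbb{R}^d)$.

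For the distance estimate I would insert the intermediate measure $g_2 \sharp m_1$ and use the triangle inequality for $d_1$,
\begin{equation} \nonumber
d_1(g_1 \sharp m_1, g_2 \sharp m_2) \leq d_1(g_1 \sharp m_1, g_2 \sharp m_1) + d_1(g_2 \sharp m_1, g_2 \sharp m_2).
\end{equation}
For the first term, any $\phi \in 1\text{-Lip}$ satisfies, via \eqref{push-forward}, $\int_{\mathbb{R}^d} \phi \dd(g_1 \sharp m_1 - g_2 \sharp m_1) = \int_{\mathbb{R}^d} \big(\phi(g_1(x)) - \phi(g_2(x))\big) \dd m_1(x) \leq \int_{\mathbb{R}^d} |g_1(x)-g_2(x)| \dd m_1(x)$, and since $|g_1(x)-g_2(x)| \leq \|g_1-g_2\|_{\mathcal{G},1}(1+|x|)$ this reduces to controlling $\int_{\mathbb{R}^d}(1+|x|)\dd m_1 = 1 + \int_{\mathbb{R}^d} |x| \dd m_1$. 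For the second term, $g_2$ being $C$-Lipschitz and $\phi$ being $1$-Lipschitz make $\phi \circ g_2$ a $C$-Lipschitz function (after normalizing $\phi(0)=0$ to guarantee integrability, which does not change the integral against the mass-zero measure $m_1-m_2$), so $\int_{\mathbb{R}^d}(\phi\circ g_2)\dd(m_1-m_2) \leq C\, d_1(m_1,m_2)$; this is exactly the second contribution of the claimed inequality.

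The only delicate point, and the one I expect to be the main source of friction, is tracking the constant in the first term: one needs $\int_{\mathbb{R}^d}|x|\dd m_1 \leq C$, so that $\int_{\mathbb{R}^d}(1+|x|)\dd m_1 \leq 1+C$ matches the claimed prefactor $(1+C)$. For $p=1$ this is immediate from $m_1 \in \mathcal{P}^C_1(\mathbb{R}^d)$; for $p>1$ Jensen's inequality only yields $\int_{\mathbb{R}^d}|x|\dd m_1 \leq \big(\int_{\mathbb{R}^d}|x|^p\dd m_1\big)^{1/p} \leq C^{1/p}$, which is bounded by $C$ precisely when $C \geq 1$. Since $C$ is used throughout as a generic constant that may be enlarged (in particular taken $\geq 1$), this causes no loss, and combining the two terms gives $d_1(g_1 \sharp m_1, g_2 \sharp m_2) \leq (1+C)\|g_1-g_2\|_{\mathcal{G},1} + C\,d_1(m_1,m_2)$. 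All remaining steps are routine manipulations of the Rubinstein--Kantorovich duality and \eqref{push-forward}.
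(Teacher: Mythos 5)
Your proof is correct and follows essentially the same argument as the paper: the moment bound is identical, and your triangle-inequality splitting through the intermediate measure $g_2 \sharp m_1$ is just the mirror image of the paper's decomposition through $g_1 \sharp m_2$, with the same two estimates (the $\|\cdot\|_{\mathcal{G},1}$-norm of $g_1-g_2$ integrated against a first-moment bound, and the $C$-Lipschitz composition $\phi \circ g$ tested against $m_1-m_2$ to produce $C\,d_1(m_1,m_2)$). Your remark that $\int_{\mathbb{R}^d}|x|\,\dd m_1 \leq C^{1/p}$ only yields the prefactor $1+C$ when $C\geq 1$ is a genuine subtlety that the paper glosses over (it bounds $\int_{\mathbb{R}^d}(1+|x|)\,\dd m_2(x)$ by $1+C$ with the same implicit assumption), and resolving it via the generic-constant convention is the right call.
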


\begin{proof}
Let $m \in \mathcal{P}^C_p(\mathbb{R}^d)$ and let $g \in \mathcal{G}_1^C$ be a Borel map. By \eqref{push-forward} we have
\begin{equation} \nonumber
\int_{\mathbb{R}^d} |x|^p \dd g \sharp m (x) = \int_{\mathbb{R}^d} |g(x)|^p \dd m (x) 
 \leq \| g \|_{\mathcal{G},1}^p \int_{\mathbb{R}^d} (1+|x|)^p \dd m (x) \leq q.
\end{equation}
Consider $(g_1,m_1)$ and $(g_2,m_2)$ in $\mathcal{G}_1^C \times \mathcal{P}^C_p(\mathbb{R}^d)$. We have
\begin{align*}
& d_1(g_1 \sharp m_1,g_2 \sharp m_2) = \sup_{\phi \in 1\mathrm{-Lip}}\int_{\mathbb{R}^d} \phi(x) \dd (g_1 \sharp m_1 - g_2 \sharp m_2)(x) \\
& \qquad = \sup_{\phi \in 1\mathrm{-Lip}}\int_{\mathbb{R}^d} (\phi \circ g_1(x) - \phi \circ g_2(x)) \dd m_2(x) + \int_{\mathbb{R}^d} \phi \circ g_1(x) \dd (m_1-m_2)(x) \\
& \qquad \leq \|g_1-g_2\|_{\mathcal{G},1} \int_{\mathbb{R}^d}(1+|x|)\dd m_2(x) +  \sup_{\phi \in 1\mathrm{-Lip}} \int_{\mathbb{R}^d}  \phi \circ g_1(x) \dd (m_1-m_2)(x)\\
& \qquad \leq (1+C) \|g_1-g_2\|_{\mathcal{G},1} + C \sup_{\phi \in 1\mathrm{-Lip}} \int_{\mathbb{R}^d} C^{-1} \phi \circ g_1(x)  \dd (m_1-m_2)(x). \label{continuous-g-m}
\end{align*}
Observing that $C^{-1} \phi \circ g_1 \in 1\mathrm{-Lip}$, we deduce inequality (\ref{d1mg}).
\end{proof}

Given a convex function $u \colon \mathbb{R}^d \to \mathbb{R}$, we define the Moreau envelope $V_u$ and the proximal operator $\prox_u$ of $u$ as follows:
\begin{equation} \label{prox-operator}
V_u(x) := \min_{y \in \mathbb{R}^d} \frac{1}{2}|x-y|^2 + u(y), \qquad \prox_u(x) := \argmin_{y \in \mathbb{R}^d} \frac{1}{2}|x-y|^2 + u(y).
\end{equation}
In the proofs, we will occasionally consider the map $g_u \colon \mathbb{R}^d \times \mathbb{R}^d \rightarrow \mathbb{R}$, defined by
\begin{equation}\nonumber
g_u(x,y) := \frac{1}{2}|x-y|^2 + u(y).
\end{equation}

\begin{prpstn} \label{non-expensive}
Let $u \colon \mathbb{R}^d \to \mathbb{R}$ be a convex function. Then $\prox_u$ and $(\Id - \prox_u)$ are non-expansive.
\end{prpstn}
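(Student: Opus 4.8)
The plan is to establish the stronger property that $\prox_u$ is \emph{firmly} non-expansive, from which both assertions follow by elementary algebra. First I would note that $\prox_u(x)$ is a well-defined single point: since $u \colon \mathbb{R}^d \to \mathbb{R}$ is a finite convex function, it admits an affine minorant, so for each $x$ the objective $y \mapsto \tfrac{1}{2}|x-y|^2 + u(y)$ is strongly convex and coercive, and thus has a unique minimizer. Setting $p := \prox_u(x)$ and $p' := \prox_u(x')$, the first-order optimality condition for these unconstrained strongly convex problems reads
\begin{equation} \nonumber
x - p \in \partial u(p), \qquad x' - p' \in \partial u(p').
\end{equation}

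The key step is to apply the monotonicity of the subdifferential of the convex function $u$ to this pair of subgradient inclusions, which gives $\langle (x-p)-(x'-p'),\, p-p' \rangle \geq 0$. Rearranging yields the firm non-expansiveness inequality
\begin{equation} \nonumber
\langle x - x',\, p - p' \rangle \geq |p - p'|^2.
\end{equation}
From here both claims are immediate. For $\prox_u$, combining this with the Cauchy--Schwarz inequality gives $|p-p'|^2 \leq |x-x'|\,|p-p'|$, hence $|\prox_u(x)-\prox_u(x')| \leq |x-x'|$. For $\Id - \prox_u$, I would expand
\begin{equation} \nonumber
\bigl| (x-p)-(x'-p') \bigr|^2 = |x-x'|^2 - 2\langle x-x',\, p-p'\rangle + |p-p'|^2,
\end{equation}
and then bound the cross term from above using the firm non-expansiveness inequality, obtaining $\bigl|(x-p)-(x'-p')\bigr|^2 \leq |x-x'|^2 - |p-p'|^2 \leq |x-x'|^2$, which is exactly the non-expansiveness of $\Id - \prox_u$.

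The only delicate point, and the main (rather minor) obstacle, is justifying the optimality condition $x - p \in \partial u(p)$ together with the monotonicity of $\partial u$. Both are standard for finite convex functions on $\mathbb{R}^d$, which are automatically continuous and subdifferentiable everywhere. If one prefers to avoid invoking subdifferential calculus, an equivalent route is to argue directly from the variational inequalities satisfied by the minimizers: perturbing $p$ towards an arbitrary $z$ and letting the step size tend to $0$ gives $\langle x - p,\, z - p\rangle \leq u(z) - u(p)$, and symmetrically for $(x',p')$; adding the two inequalities obtained by choosing $z = p'$ in the first and $z = p$ in the second reproduces the firm non-expansiveness inequality without any explicit reference to $\partial u$.
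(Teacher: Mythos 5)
Your proof is correct. It is worth noting, though, that the paper does not actually spell out any argument here: its entire proof is a one-line citation of Proposition 12.27 in the book of Bauschke and Combettes, which states precisely that the proximal operator of a proper convex lower semicontinuous function is firmly non-expansive (from which non-expansiveness of both $\prox_u$ and $\Id - \prox_u$ follows). What you have written is, in effect, the standard proof of that cited result: you derive the subgradient optimality condition $x - \prox_u(x) \in \partial u(\prox_u(x))$, invoke monotonicity of $\partial u$ to get firm non-expansiveness
$\langle x - x', \prox_u(x) - \prox_u(x') \rangle \geq |\prox_u(x) - \prox_u(x')|^2$,
and then deduce both claims by Cauchy--Schwarz and by expanding the square, respectively. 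Your fallback variational-inequality argument (perturbing the minimizer and letting the step size vanish) is also sound and is just an unpacking of the inclusion $x - \prox_u(x) \in \partial u(\prox_u(x))$. So the mathematical content agrees with what the paper relies on; the difference is that your version is self-contained, at the cost of a few lines, whereas the paper delegates the whole statement to the literature. Either choice is legitimate; yours has the minor advantage of making explicit why finiteness of $u$ on all of $\mathbb{R}^d$ guarantees that $\prox_u$ is single-valued and everywhere defined, a point the citation leaves implicit.
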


\begin{proof}
Direct consequence of \cite[Proposition 12.27]{combettes}.
\end{proof}

\begin{lmm} \label{prox-in-G1}
Let $R>0$ and let $u \in \mathcal{Q}_2^R$ (the set was defined in (\ref{Q_p})). Then  $|\prox_u|^2 \in \mathcal{G}_2^{C_1(R)}$ and $|\prox_u| \in \mathcal{G}_1^{(C_1(R))^{1/2}}$, where $C_1(R):=8R+2$.
\end{lmm}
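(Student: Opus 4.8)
The plan is to control $\prox_u$ at every point by combining the non-expansiveness from Proposition~\ref{non-expensive} with an explicit bound at a single point. Since $\prox_u$ is non-expansive, for every $x \in \mathbb{R}^d$ one has $|\prox_u(x) - \prox_u(0)| \leq |x|$, hence
\begin{equation} \nonumber
|\prox_u(x)| \leq |x| + |\prox_u(0)|.
\end{equation}
It therefore suffices to bound $|\prox_u(0)|$ in terms of $R$, after which the rest is an elementary squaring argument. This reduction is the key idea: it replaces the $x$-dependent optimality inequality by a single scalar estimate.

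To bound $|\prox_u(0)|$, I would use the very definition of the proximal operator. Writing $p_0 := \prox_u(0)$, the minimality of $p_0$ in \eqref{prox-operator} tested against the competitor $y = 0$ gives
\begin{equation} \nonumber
\tfrac{1}{2}|p_0|^2 + u(p_0) \leq \tfrac{1}{2}|0|^2 + u(0) = u(0).
\end{equation}
Now I invoke the two-sided bound defining $\mathcal{Q}_2^R$ in \eqref{Q_p}: on the one hand $u(0) \leq R(1+0) = R$, and on the other hand $u(p_0) \geq -R$. Substituting yields $\tfrac{1}{2}|p_0|^2 \leq 2R$, i.e. $|\prox_u(0)| \leq 2\sqrt{R}$.

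Combining the two steps gives $|\prox_u(x)| \leq |x| + 2\sqrt{R}$ for all $x$. Squaring and applying the Young inequality $2 \cdot |x| \cdot 2\sqrt{R} \leq |x|^2 + 4R$, I obtain
\begin{equation} \nonumber
|\prox_u(x)|^2 \leq |x|^2 + 4\sqrt{R}\,|x| + 4R \leq 2|x|^2 + 8R \leq (8R+2)(1+|x|^2) = C_1(R)(1+|x|^2),
\end{equation}
which is exactly the statement $|\prox_u|^2 \in \mathcal{G}_2^{C_1(R)}$. For the second claim, from $|\prox_u(x)|^2 \leq C_1(R)(1+|x|^2) \leq C_1(R)(1+|x|)^2$ I take square roots to get $|\prox_u(x)| \leq (C_1(R))^{1/2}(1+|x|)$, i.e. $|\prox_u| \in \mathcal{G}_1^{(C_1(R))^{1/2}}$.

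I do not expect any genuine obstacle here; the only delicate point is the bookkeeping of constants so that they collapse to precisely $C_1(R) = 8R + 2$. This forces the use of the sharp non-expansiveness bound: the cruder estimate $|\prox_u(x)|^2 \leq 2|\prox_u(x) - x|^2 + 2|x|^2$, obtained by testing optimality at $x$ against $y=0$, would only produce a coefficient $4$ on $|x|^2$ and hence require $R \geq 1/4$, whereas the argument above is valid for every $R>0$.
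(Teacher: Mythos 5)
Your proof is correct and follows essentially the same route as the paper's: non-expansiveness of $\prox_u$ (Proposition \ref{non-expensive}) to reduce to a bound on $|\prox_u(0)|$, the optimality inequality at $y=0$ combined with the two-sided bound \eqref{Q_p} to get $|\prox_u(0)|^2 \leq 4R$, and then squaring (your Young inequality step is just the inequality $(a+b)^2 \leq 2a^2+2b^2$ used in the paper) followed by a square root for the $\mathcal{G}_1$ claim. No gaps; the constants collapse to $C_1(R)=8R+2$ exactly as in the paper.
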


\begin{proof}
Let $u \in \mathcal{Q}_2^R$.
By Proposition \ref{non-expensive}, the map $\prox_u$ is non-expansive. Thus
\begin{equation} \label{proxX0}
|\prox_u(x)| \leq |\prox_u(0)| + |x|.
\end{equation}
In addition, from the definition of the proximal operator (\ref{prox-operator}), we have
\begin{equation*}
\frac{1}{2}|\prox_u(0)|^2 + u(\prox_u(0)) \leq u(0).
\end{equation*}
Since $u \in  \mathcal{Q}_2^R$, we deduce that $|\prox_u(0)|^2 \leq 4R$.
We further obtain with (\ref{proxX0}) that
\begin{equation} \label{prox0M1}
|\prox_u(x)|^2 \leq 2(|x|^2 + |\prox_u(0)|^2)
\leq C_1(R)(1 + |x|^2),
\end{equation}
as was to be proved. 
Taking the square root of \eqref{prox0M1}, we infer that $|\prox_u| \in \mathcal{G}_1^{C_1(R)^{1/2}}$.
\end{proof}

\begin{lmm} \label{V-in-Q2}
Let $R>0$ and let $u \in \mathcal{Q}_2^R$. Then  $V_u \in \mathcal{Q}_2^{C_2(R)}$, where 
\begin{equation} \nonumber
C_2(R):=(R+1)(1 + C_1(R)).
\end{equation}
\end{lmm}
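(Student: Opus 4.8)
The plan is to show that $V_u$ belongs to $\mathcal{Q}_2^{C_2(R)}$ by checking the three defining properties of that set: convexity, the lower bound $V_u(x) \ge -C_2(R)$, and the growth bound $V_u(x) \le C_2(R)(1+|x|^2)$. The starting point is the representation of $V_u$ through the proximal operator. Since $u$ is convex and finite on $\mathbb{R}^d$ (hence continuous), the map $y \mapsto g_u(x,y) = \frac{1}{2}|x-y|^2 + u(y)$ is strongly convex and coercive, so the minimum in \eqref{prox-operator} is attained at the (unique) point $\prox_u(x)$. This yields the identity $V_u(x) = g_u(x,\prox_u(x)) = \frac{1}{2}|x-\prox_u(x)|^2 + u(\prox_u(x))$, which I would use throughout.

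For convexity, I would observe that $g_u$ is jointly convex in $(x,y)$, being the sum of the jointly convex quadratic $\frac{1}{2}|x-y|^2$ and the convex function $u(y)$; the partial minimization $V_u(x) = \inf_y g_u(x,y)$ of a jointly convex function is then convex, which settles the first property. The lower bound is immediate from the representation above: the quadratic term is nonnegative and $u \ge -R$ since $u \in \mathcal{Q}_2^R$, so $V_u(x) \ge u(\prox_u(x)) \ge -R \ge -C_2(R)$, using that $C_2(R) = (R+1)(1+C_1(R)) \ge R$. (As a sanity check, taking $y=x$ in \eqref{prox-operator} also gives the cheap upper bound $V_u(x) \le u(x) \le R(1+|x|^2)$, which already shows membership in $\mathcal{Q}_2^R$; but to recover the exact stated constant $C_2(R)$ one should argue through the proximal representation as below.)

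The only computation requiring care is the growth bound, and it is also where the precise value of $C_2(R)$ comes from. Starting again from $V_u(x) = \frac{1}{2}|x-\prox_u(x)|^2 + u(\prox_u(x))$, I would bound the quadratic term by $\frac{1}{2}|x-\prox_u(x)|^2 \le |x|^2 + |\prox_u(x)|^2$, and the second term by $u(\prox_u(x)) \le R\bigl(1 + |\prox_u(x)|^2\bigr)$ from $u \in \mathcal{Q}_2^R$. Inserting the growth estimate $|\prox_u(x)|^2 \le C_1(R)(1+|x|^2)$ supplied by \eqref{prox0M1} in Lemma \ref{prox-in-G1}, and dominating the stray $|x|^2$ and constant $R$ by $(1+|x|^2)$-multiples, the four resulting terms collect into $(1+|x|^2)\bigl(1 + C_1(R) + R + R\,C_1(R)\bigr) = (R+1)(1+C_1(R))(1+|x|^2)$, which is exactly $C_2(R)(1+|x|^2)$. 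The main (and essentially only) obstacle is this bookkeeping: one must factor the four terms carefully so that the stated constant is recovered exactly rather than some arbitrary larger bound. No deeper difficulty arises, as convexity and the lower bound follow directly from the proximal representation.
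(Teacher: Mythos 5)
Your proof is correct and follows essentially the same route as the paper's: convexity of $V_u$ as a partial minimization of the jointly convex $g_u$, the lower bound $V_u \geq -R$ from the proximal representation, and the growth bound obtained by combining $\frac{1}{2}|x-\prox_u(x)|^2 + u(\prox_u(x)) \leq |x|^2 + |\prox_u(x)|^2 + R(1+|\prox_u(x)|^2)$ with the estimate $|\prox_u(x)|^2 \leq C_1(R)(1+|x|^2)$ of Lemma \ref{prox-in-G1}, exactly as in the paper. (Your parenthetical remark that taking $y=x$ gives $V_u \leq u \leq R(1+|x|^2)$, hence $V_u \in \mathcal{Q}_2^R$, would in fact already suffice, since $\mathcal{Q}_2^R \subseteq \mathcal{Q}_2^{C_2(R)}$; the paper does not use this shortcut.)
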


\begin{proof}
Let $u \in \mathcal{Q}_2^R$. Clearly $V_u$ is convex as the infimum with respect to $y\in \mathbb{R}^d$ of the jointly convex map $(x,y) \mapsto g_u(x,y)$.
For any $x \in \mathbb{R}^d$, we have
\begin{equation}\nonumber
V_u(x) = \frac{1}{2}|x-\prox_u(x)|^2 + u(\prox_u(x)),
\end{equation}
by definition of $V_u$ and $\prox_u$.
Since $u \in \mathcal{Q}_2^R$, we further obtain that
\begin{equation}\nonumber
- R \leq V_u(x)  \leq |x|^2 + |\prox_u(x)|^2 + R(1+  |\prox_u(x)|^2).\label{vu-x}
\end{equation}
Applying Lemma \ref{prox-in-G1}, we finally obtain that $V_u \in \mathcal{Q}_2^{C_2(R)}$.
\end{proof}

\begin{lmm} \label{QinH}
Let $R>0$. For any $u$ and $v$ in $\mathcal{Q}_2^{R}$, the inequality
\begin{equation} \label{C3}
\| \prox_{u} - \prox_{v} \|_{\mathcal{G},1} \leq C_3(R)  \|u - v\|_{\mathcal{G},2}^{1/2}
\end{equation}
holds, where $C_3(R) := \sqrt{2(1+C_1(R))}$.
\end{lmm}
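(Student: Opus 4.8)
The plan is to fix $x \in \mathbb{R}^d$, write $p := \prox_u(x)$ and $q := \prox_v(x)$, and bound $|p-q|$ by exploiting the strong convexity of the maps $g_u(x,\cdot)$ and $g_v(x,\cdot)$. Indeed, for fixed $x$ the function $y \mapsto g_u(x,y) = \tfrac12 |x-y|^2 + u(y)$ is $1$-strongly convex, since subtracting $\tfrac12 |y|^2$ leaves the convex map $y \mapsto \tfrac12|x|^2 - \langle x,y\rangle + u(y)$. As $p$ is its unique minimizer, $0$ belongs to its subdifferential at $p$, and strong convexity yields the quadratic growth estimate
\begin{equation} \nonumber
g_u(x,q) \geq g_u(x,p) + \tfrac12 |p-q|^2 .
\end{equation}
Symmetrically, $g_v(x,p) \geq g_v(x,q) + \tfrac12 |p-q|^2$.

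Adding these two inequalities, the quadratic terms $\tfrac12|x-\cdot|^2$ common to $g_u$ and $g_v$ cancel, and since $g_u(x,y) - g_v(x,y) = u(y) - v(y)$ for every $y$, I obtain
\begin{equation} \nonumber
|p-q|^2 \leq \big(u(q) - v(q)\big) - \big(u(p) - v(p)\big).
\end{equation}
This cancellation is the key step of the argument. Bounding each term via the definition of the norm $\|\cdot\|_{\mathcal{G},2}$, namely $|u(y)-v(y)| \leq \|u-v\|_{\mathcal{G},2}\,(1+|y|^2)$, gives
\begin{equation} \nonumber
|p-q|^2 \leq \|u-v\|_{\mathcal{G},2}\,\big(2 + |p|^2 + |q|^2\big).
\end{equation}

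Next I would invoke Lemma \ref{prox-in-G1}, which provides $|p|^2 \leq C_1(R)(1+|x|^2)$ and likewise $|q|^2 \leq C_1(R)(1+|x|^2)$; hence $2 + |p|^2 + |q|^2 \leq 2\big(1+C_1(R)\big)(1+|x|^2)$, using $2 \leq 2(1+|x|^2)$. Taking square roots and using the elementary bound $(1+|x|^2)^{1/2} \leq 1+|x|$ leads to
\begin{equation} \nonumber
\frac{|p-q|}{1+|x|} \leq \sqrt{2\big(1+C_1(R)\big)}\,\|u-v\|_{\mathcal{G},2}^{1/2}.
\end{equation}
Taking the supremum over $x \in \mathbb{R}^d$ and recalling the definition of $C_3(R)$ concludes the proof.

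The only slightly delicate point is the justification of the quadratic growth inequality at the minimizer of a strongly convex function; this is classical (the subgradient inequality with the strong convexity modulus, evaluated with the vanishing subgradient at $p$), and everything else is an elementary combination of the two symmetric estimates with the growth bound furnished by Lemma \ref{prox-in-G1}.
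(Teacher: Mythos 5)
Your proposal is correct and follows essentially the same route as the paper: both exploit the $1$-strong convexity of $g_u(x,\cdot)$ and $g_v(x,\cdot)$ at their respective minimizers, sum the two resulting quadratic-growth inequalities so that the quadratic terms cancel, bound the remaining difference $\big(u(q)-v(q)\big)-\big(u(p)-v(p)\big)$ by $\|u-v\|_{\mathcal{G},2}\,(2+|p|^2+|q|^2)$, and then invoke Lemma \ref{prox-in-G1} together with $(1+|x|^2)^{1/2}\leq 1+|x|$ to conclude. The only difference is presentational: you justify the two inequalities via the vanishing subgradient at the minimizer, while the paper states them directly.
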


\begin{proof}
Let $u$ and $v$ in $\mathcal{Q}_2^R$.
Observing that $g_u$ and $g_w$ are $1$-strongly convex with respect to their second argument, we have
\begin{align*}
\frac{1}{2} | \prox_{u}(x) - \prox_{v}(x)|^2 & \leq g_u(x,\prox_{v}(x)) - g_u(x,\prox_{u}(x)), \\
\frac{1}{2} | \prox_{u}(x) - \prox_{v}(x)|^2 & \leq g_v(x,\prox_{u}(x)) - g_v(x,\prox_{v}(x)).
\end{align*}
Summing up the two inequalities, we obtain that
\begin{align}\nonumber
| \prox_{u}(x) - \prox_{v}(x)|^2 & \leq v(\prox_{u}(x))  - u(\prox_{u}(x)) +  u(\prox_{v}(x))- v(\prox_{v}(x))\\ 
& \leq (2 + |\prox_{u}(x)|^2 + |\prox_{v}(x)|^2) \|u - v\|_{\mathcal{G},2}. \label{eq:prox_ineq_1}
%& \leq   2(1 + 2C_1(R)(1+|x|^2)) \|u - v\|_{\mathcal{G},2}\\
%& \leq 2(1 + 2C_1(R))(1+|x|)^2 \|u - v\|_{\mathcal{G},2}.\nonumber
\end{align}
By Lemma \ref{prox-in-G1},
\begin{equation} \label{eq:prox_ineq_2}
2 + | \prox_u(x)|^2 + | \prox_v(x)|^2
\leq 2 + 2C_1(R) (1 + |x|^2)
\leq C_3(R)^2 (1 + |x|^2).
\end{equation}
Combining \eqref{eq:prox_ineq_1} and \eqref{eq:prox_ineq_2} and taking the square root, we obtain \eqref{C3}.
\end{proof}

\begin{lmm} \label{Vu-Vv}
Let $R>0$. For any $u$ and $v$ in $\mathcal{Q}_2^{R}$, we have
\begin{equation} \label{C4}
\| V_u - V_v \|_{\mathcal{G},2} \leq C_4(R)\| u - v \|_{\mathcal{G},2},
\end{equation}
where $C_4(R) := 1+C_1(R)$.
\end{lmm}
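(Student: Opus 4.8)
The plan is to exploit the classical sub-optimality estimate for the two minimization problems defining $V_u$ and $V_v$, combined with the growth bound on the proximal operator established in Lemma \ref{prox-in-G1}. The key observation is that $\prox_v(x)$ is a feasible (though not necessarily optimal) competitor for the problem defining $V_u(x)$, and symmetrically.

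Fix $x \in \mathbb{R}^d$. First I would insert $\prox_v(x)$ as a competitor in the minimization defining $V_u(x)$, which gives
\[
V_u(x) \leq \tfrac{1}{2}|x - \prox_v(x)|^2 + u(\prox_v(x)).
\]
Subtracting the exact identity $V_v(x) = \tfrac{1}{2}|x-\prox_v(x)|^2 + v(\prox_v(x))$ (valid by definition of $V_v$ and $\prox_v$), the quadratic terms cancel and I am left with
\[
V_u(x) - V_v(x) \leq u(\prox_v(x)) - v(\prox_v(x)) \leq (1 + |\prox_v(x)|^2)\,\|u-v\|_{\mathcal{G},2},
\]
where the last inequality is just the definition of the norm $\|\cdot\|_{\mathcal{G},2}$ applied at the point $\prox_v(x)$. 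Interchanging the roles of $u$ and $v$ yields $V_v(x) - V_u(x) \leq (1+|\prox_u(x)|^2)\|u-v\|_{\mathcal{G},2}$, so that $|V_u(x) - V_v(x)|$ is bounded by $\max\{1+|\prox_u(x)|^2,\, 1+|\prox_v(x)|^2\}\,\|u-v\|_{\mathcal{G},2}$.

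It then remains to control the prox-growth factor. Since $u, v \in \mathcal{Q}_2^R$, Lemma \ref{prox-in-G1} gives $|\prox_u(x)|^2 \leq C_1(R)(1+|x|^2)$ and likewise for $v$, whence
\[
1 + |\prox_u(x)|^2 \leq 1 + C_1(R)(1+|x|^2) \leq (1 + C_1(R))(1+|x|^2),
\]
using the crude bound $1 \leq 1+|x|^2$, and similarly for $v$. Dividing by $1+|x|^2$ and taking the supremum over $x \in \mathbb{R}^d$ yields $\|V_u - V_v\|_{\mathcal{G},2} \leq (1+C_1(R))\|u-v\|_{\mathcal{G},2} = C_4(R)\|u-v\|_{\mathcal{G},2}$, as claimed.

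I do not expect any genuine obstacle here: the estimate is essentially the standard Lipschitz dependence of the Moreau envelope on its defining function. The only points to watch are that the competitor argument must be carried out symmetrically to obtain a two-sided bound on $V_u - V_v$, and that the elementary inequality $1 \leq 1 + |x|^2$ is precisely what collapses the constant to the clean value $1 + C_1(R)$.
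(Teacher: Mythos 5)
Your proof is correct and follows essentially the same route as the paper's: both insert $\prox_v(x)$ as a competitor in the problem defining $V_u(x)$, cancel the quadratic terms, bound the difference $u(\prox_v(x)) - v(\prox_v(x))$ via the $\|\cdot\|_{\mathcal{G},2}$ norm together with the growth estimate of Lemma \ref{prox-in-G1}, and conclude by exchanging $u$ and $v$. No further comment is needed.
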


\begin{proof}
Let $u$ and $v$ in $\mathcal{Q}_2^{R}$. Recalling the definitions of $g_u$ and $g_v$, we have
\begin{equation*}
V_u(x)-V_v(x) \leq g_u(x,\prox_v(x)) - g_v(x,\prox_v(x))
= u(\prox_v(x)) - v(\prox_v(x)).
\end{equation*}
Lemma \ref{prox-in-G1} yields
\begin{equation*} \nonumber
V_u(x) - V_v(x)
\leq  (1+ \|\prox_v(x) \|_{\mathcal{G},1}^2) \| u - v \|_{\mathcal{G},2}
\leq (1 + C_1(R) (1 + |x|^2)) \| u - v \|_{\mathcal{G},2}.
\end{equation*}
Exchanging $u$ and $v$, we deduce \eqref{C4}.
\end{proof}

\begin{lmm} \label{Vu-xy}
Let $R>0$. For any $u \in \mathcal{Q}_2^{R}$ and for any $(x,y) \in \mathbb{R}^d \times \mathbb{R}^d$,
\begin{equation}
|V_u(x) - V_u(y)| \leq C_5(R)(1 + |x| + |y|)|x-y|, \label{vux-vuy}
\end{equation}
where $C_5(R):= 1+ \sqrt{C_1(R)}$.
\end{lmm}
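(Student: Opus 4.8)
The plan is to exploit the minimality property defining the Moreau envelope $V_u$ by using $\prox_u(y)$ as a suboptimal competitor in the minimization defining $V_u(x)$. Since $V_u(x) \leq \tfrac{1}{2}|x-\prox_u(y)|^2 + u(\prox_u(y))$ while $V_u(y) = \tfrac{1}{2}|y-\prox_u(y)|^2 + u(\prox_u(y))$, the term $u(\prox_u(y))$ cancels in the difference, leaving
\[
V_u(x) - V_u(y) \leq \tfrac{1}{2}\big(|x-\prox_u(y)|^2 - |y-\prox_u(y)|^2\big).
\]
Factoring the difference of squares as a scalar product, the right-hand side equals $\tfrac{1}{2}\langle x-y,\, x+y - 2\prox_u(y)\rangle$.

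Next I would apply the Cauchy–Schwarz and triangle inequalities to obtain
\[
V_u(x) - V_u(y) \leq \tfrac{1}{2}|x-y|\big(|x|+|y|+2|\prox_u(y)|\big),
\]
and invoke Lemma \ref{prox-in-G1}, which gives the growth bound $|\prox_u(y)| \leq \sqrt{C_1(R)}\,(1+|y|)$. Substituting, the factor $|x|+|y|+2|\prox_u(y)|$ is at most $2\sqrt{C_1(R)} + |x| + (1+2\sqrt{C_1(R)})|y|$; halving it, the resulting constant term $\sqrt{C_1(R)}$, the coefficient $\tfrac{1}{2}$ of $|x|$, and the coefficient $\tfrac{1}{2}+\sqrt{C_1(R)}$ of $|y|$ are all dominated by $C_5(R) = 1+\sqrt{C_1(R)}$. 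This yields $V_u(x) - V_u(y) \leq C_5(R)(1+|x|+|y|)|x-y|$, and exchanging the roles of $x$ and $y$ gives the same bound for $V_u(y)-V_u(x)$, so that \eqref{vux-vuy} follows.

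The computation is essentially routine; the only point requiring some care is verifying that the constant produced after applying Lemma \ref{prox-in-G1} is genuinely dominated by $C_5(R)$, which is why the asymmetric estimate on $|\prox_u(y)|$ must be absorbed by taking the larger coefficient over $|x|$ and $|y|$. An alternative route would use the classical identity $\nabla V_u = \Id - \prox_u$ for the Moreau envelope of a convex function, combined with the mean value inequality along the segment $[y,x]$: one bounds $|\nabla V_u(z)| = |z-\prox_u(z)| \leq \sqrt{C_1(R)} + (1+\sqrt{C_1(R)})|z|$ and uses $|z|\leq |x|+|y|$ on that segment, producing the same constant. I prefer the competitor-point argument, however, since it stays entirely within the elementary tools already established in this section and does not require invoking differentiability of $V_u$.
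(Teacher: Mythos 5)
Your proposal is correct and follows essentially the same route as the paper's proof: both use $\prox_u(y)$ as a suboptimal competitor in the minimization defining $V_u(x)$, factor the resulting difference of squares via Cauchy--Schwarz into $\tfrac12|x-y|\,|x+y-2\prox_u(y)|$, and absorb the growth bound of Lemma \ref{prox-in-G1} into the constant $C_5(R)=1+\sqrt{C_1(R)}$ before symmetrizing in $x$ and $y$. The only difference is cosmetic bookkeeping of the intermediate constants.
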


\begin{proof}
Let $u \in\mathcal{Q}_2^{R}$. We have
\begin{align*}
V_u(x)-V_u(y) \leq \ & g_u(x,\prox_u(y)) - g_u(y,\prox_u(y)) \\
= \ & \frac{1}{2} |x-\prox_u(y)|^2 - \frac{1}{2} |y-\prox_u(y)|^2 \\
\leq \ & \frac{1}{2} |x+y-2 \prox_u(y)| \cdot |x-y|. 
\end{align*}
We further obtain with Lemma \ref{prox-in-G1} that
\begin{equation*}
|x+y-2 \prox_u(y)|
\leq |x| + |y | + 2 \sqrt{C_1(R)} (1 + |y|)
\leq 2 (1 + \sqrt{C_1(R)}) (1 + |x| + |y|).
\end{equation*}
Combining the two obtained inequalities and exchanging $x$ and $y$, we obtain \eqref{vux-vuy}.
\end{proof}

\begin{lmm} \label{Ups}
Let $R> 0$ and let $\mathcal{M}$ be a subset of $\mathcal{P}_2^R(\mathbb{R}^d)$. Given $u \in \mathcal{Q}_2^R$, consider the mapping $\Upsilon[u](x)$ defined for any $x \in \mathbb{R}^d$ by
\begin{equation} \nonumber
 \Upsilon [u](x) := \sup_{\xi \in \mathcal{M}}\int_{\mathbb{R}^d}u(x +y) \dd \xi(y).
\end{equation}
Then $\Upsilon[u] \in \mathcal{Q}_2^{C_6(R)}$, where $C_6(R)= 2R(1+R)$. Moreover, the map $\mathcal{Q}_2^R \ni u \mapsto \Upsilon[u]$ is Lipschitz continuous with modulus $2(1+R)$.
\end{lmm}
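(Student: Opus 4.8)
The plan is to verify in turn the three defining properties of $\mathcal{Q}_2^{C_6(R)}$ (convexity together with the two-sided bound) and then to establish the Lipschitz estimate directly from the definition of the $\|\cdot\|_{\mathcal{G},2}$ norm. The only analytic inputs are the elementary inequality $|x+y|^2 \leq 2|x|^2 + 2|y|^2$, the uniform second-moment bound $\int_{\mathbb{R}^d} |y|^2 \,\dd\xi(y) \leq R$ valid for every $\xi \in \mathcal{M} \subseteq \mathcal{P}_2^R(\mathbb{R}^d)$, and the defining bounds of $\mathcal{Q}_2^R$ for $u$ and $v$.

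First I would note that, for each fixed $\xi \in \mathcal{M}$, the map $x \mapsto \int_{\mathbb{R}^d} u(x+y)\,\dd\xi(y)$ is convex, since $u$ is convex and convexity is preserved under translation and under integration against a probability measure. As $\Upsilon[u]$ is the pointwise supremum of this family of convex functions, it is itself convex. For the lower bound I would use $u \geq -R$ and the fact that $\xi$ is a probability measure, so that each integral is at least $-R$, hence $\Upsilon[u](x) \geq -R \geq -C_6(R)$ (the last inequality holding because $C_6(R) = 2R(1+R) \geq R$). For the upper bound I would combine $u(z) \leq R(1+|z|^2)$ with $|x+y|^2 \leq 2|x|^2 + 2|y|^2$ and the moment bound, obtaining $\int_{\mathbb{R}^d} u(x+y)\,\dd\xi(y) \leq R(1 + 2|x|^2 + 2R)$ uniformly in $\xi$; after taking the supremum it remains to check $R(1 + 2|x|^2 + 2R) \leq C_6(R)(1+|x|^2)$, which follows by comparing the constant terms and the $|x|^2$ coefficients. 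This confirms $\Upsilon[u] \in \mathcal{Q}_2^{C_6(R)}$.

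For the Lipschitz property I would invoke the standard inequality $|\sup_\xi A(\xi) - \sup_\xi B(\xi)| \leq \sup_\xi |A(\xi) - B(\xi)|$ to reduce the estimate to bounding, for a fixed $\xi$, the quantity $\int_{\mathbb{R}^d} |u(x+y) - v(x+y)|\,\dd\xi(y)$. Using $|u(z) - v(z)| \leq \|u-v\|_{\mathcal{G},2}(1 + |z|^2)$ together with the same second-moment estimate gives $|\Upsilon[u](x) - \Upsilon[v](x)| \leq \|u-v\|_{\mathcal{G},2}(1 + 2|x|^2 + 2R)$. Dividing by $1 + |x|^2$, using $1 + 2|x|^2 + 2R \leq 2(1+R)(1+|x|^2)$, and taking the supremum over $x$ yields $\|\Upsilon[u] - \Upsilon[v]\|_{\mathcal{G},2} \leq 2(1+R)\|u-v\|_{\mathcal{G},2}$, the claimed modulus.

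The argument is essentially routine and I do not anticipate a genuine obstacle beyond bookkeeping with the constants. The one point demanding care is uniformity in $\xi$: every pointwise estimate must hold independently of $\xi$ so that it survives the passage to the supremum, which is precisely why the uniform bound $\mathcal{M} \subseteq \mathcal{P}_2^R(\mathbb{R}^d)$ on the second moments is indispensable.
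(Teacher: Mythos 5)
Your proposal is correct and follows essentially the same route as the paper's proof: convexity as a supremum of convex maps, the bound $|x+y|^2 \leq 2|x|^2+2|y|^2$ combined with the uniform second-moment bound on $\mathcal{M}$, and the inequality $|\sup_\xi A(\xi)-\sup_\xi B(\xi)| \leq \sup_\xi |A(\xi)-B(\xi)|$ for the Lipschitz estimate. The only (immaterial) difference is that your intermediate upper bound $R(1+2|x|^2+2R)$ is slightly tighter than the paper's $2R(1+|x|^2+R)$; both yield the stated constant $C_6(R)=2R(1+R)$.
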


\begin{proof}
Let $u \in \mathcal{Q}_2^R$. For any $\xi \in \mathcal{M}$, the map $\mathbb{R}^d \ni x \mapsto \int_{\mathbb{R}^d} u(x+y) \dd \xi(y)$ is convex, as can be easily verified. Thus $\Upsilon[u](x)$ is convex with respect to $x$, as a supremum of convex maps. Moreover, for any $x \in \mathbb{R}^d$, we have
\begin{align} \nonumber
-R \leq \Upsilon[u](x) \leq \sup_{\xi \in \mathcal{M}} \int_{\mathbb{R}^d} 2R(1 + |x|^2 + |y|^2) \dd \xi(y)
\leq  2R(1 + |x|^2 + R). \nonumber
\end{align}
This proves that $\Upsilon[u] \in \mathcal{Q}_2^{C_6(R)}$. Consider now $v \in \mathcal{Q}_2^R$. We have
\begin{align}
|\Upsilon[u](x) - \Upsilon[v](x)| & \leq \sup_{\xi \in \mathcal{M}} \left \vert \int_{\mathbb{R}^d}(u(x +y) - v(x +y)) \dd \xi(y) \right \vert \notag \\
& \leq 
\|u-v\|_{\mathcal{G},2} \left( \sup_{\xi \in \mathcal{M}}  \int_{\mathbb{R}^d}(1+|x+y|^2) \dd \xi(y) \right).
\label{eq:lipsch_ups_1}
\end{align}
For any $\xi \in \mathcal{M}$, we further have
\begin{equation}
\int_{\mathbb{R}^d}(1+|x+y|^2) \dd \xi(y)
\leq 1 + 2 |x|^2 + 2 R
\leq 2(1+R) (1+ |x|^2). \label{eq:lipsch_ups_2}
\end{equation}
Combining \eqref{eq:lipsch_ups_1} and \eqref{eq:lipsch_ups_2}, we deduce that
\begin{equation*}
\| \Upsilon[u] - \Upsilon[v] \|_{\mathcal{G},2}
\leq 2(1+R) \| u - v \|_{\mathcal{G},2},
\end{equation*}
as was to be proved.
\end{proof}

\section{Existence result \label{existence-results}}

In this section we prove the main existence result.
We first investigate the continuity of the dynamic programming mapping and the continuity of the Kolmogorov mapping introduced in Subsection \ref{subsection:coupled_system}.

\subsection{Dynamic Programming mapping \label{DP-mapping} \label{dynamicmapping}}

In this section we show that for any given belief $b \in \mathcal{B}_2$,
equations (\ref{system},i) and (\ref{system},ii) have unique solutions $u$ and $\alpha$. We also investigate their dependence with respect to $b$.
These equations can be equivalently formulated as follows, with an additional variable $\bar{u} \in (\mathcal{G}_2)^{T+1}$:
\begin{align} 
\bar{u}(t+1, x) = \ & \sup_{\xi \in \mathcal{M}_t} \int_{\mathbb{R}^d} u(t+1, x + y) \dd \xi(y), \label{eq_sys_0} \\
 u(t,x) = & \ \inf_{a \in \mathbb{R}^d}   \frac{1}{2} \vert a \vert^2 + \langle a, P(t,b) \rangle +  F(t,x,b)  + \bar{u}(t+1, x + a), \label{eq_sys_i} \\
 \alpha_t(x) = & \ \underset{a \in \mathbb{R}^d}{\text{argmin}} \frac{1}{2} \vert a \vert^2 + \langle a, P(t,b) \rangle +  F(t,x,b)  + \bar{u}(t+1, x + a), \label{eq_sys_ii} \\
 u(T,x)= \ & F(T,x,b), \label{eq_sys_term}
\end{align}
for all $t \in \T$ and for all $x \in \mathbb{R}^d$.
The first step of our analysis consists in rewriting these equations in a functional form, with the help of the Moreau envelope and the proximal operator (introduced in \eqref{prox-operator}).

\begin{lmm} \label{u-alpha-prox-lemma}
Let $b \in \mathcal{B}_2$.
Let $u \in (\mathcal{G}_2)^{T+1}$, let $\bar{u} \in (\mathcal{G}_2)^{T+1}$, and let $\alpha \in (\mathcal{G}_1 \cap 1\mathrm{-Lip})^T$. Then, for any $t \in \T$ and for any $x \in \mathbb{R}^d$, equations \eqref{eq_sys_0}-\eqref{eq_sys_ii} hold true if and only if
\begin{align}
\bar{u}(t+1,x)= & \ \Upsilon[u(t+1,\cdot)](x), \label{bar_u_ups} \\
u(t,x) = & \ V_{\bar{u}(t+1,\cdot)} (x-P(t,b)) + F(t,x,b) -  \frac{1}{2}|P(t,b)|^2, \label{u-V} \\
\alpha_t(x) = & \ (\prox_{\bar{u}(t+1,\cdot)} - \Id)(x-P(t,b)) - P(t,b). \label{alpha-prox}
\end{align}
\end{lmm}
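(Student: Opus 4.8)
The only genuine content of the statement is the equivalence of \eqref{eq_sys_i}--\eqref{eq_sys_ii} with \eqref{u-V}--\eqref{alpha-prox}: indeed, \eqref{eq_sys_0} $\Leftrightarrow$ \eqref{bar_u_ups} is immediate, since the right-hand side of \eqref{eq_sys_0} is by definition $\Upsilon[u(t+1,\cdot)](x)$ for the set $\mathcal{M}=\mathcal{M}_t$ appearing in Lemma \ref{Ups}. So the plan is to collapse the infimum and the $\argmin$ in \eqref{eq_sys_i}--\eqref{eq_sys_ii} into a Moreau-envelope and proximal form by a single completion of the square.

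First I would fix $t \in \T$ and $x \in \mathbb{R}^d$ and abbreviate $w := \bar{u}(t+1,\cdot)$ and $P := P(t,b)$. The term $F(t,x,b)$ does not depend on the minimization variable $a$, so it factors out of the infimum. I would then change variables via $z = x + a$ in the remaining objective $\frac{1}{2}|a|^2 + \langle a, P \rangle + w(x+a)$ and complete the square using the elementary identity $\frac{1}{2}|z-x|^2 + \langle z-x, P\rangle = \frac{1}{2}|z-(x-P)|^2 - \frac{1}{2}|P|^2$. This rewrites the objective as $g_w(x-P,z) - \frac{1}{2}|P|^2$, where $g_w$ is the map introduced just before Proposition \ref{non-expensive}.

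Taking the infimum over $z$ (equivalently over $a$), the definition \eqref{prox-operator} of the Moreau envelope yields $\inf = V_w(x-P) - \frac{1}{2}|P|^2$; restoring the additive constant $F(t,x,b)$ gives exactly \eqref{u-V}. The minimizer in $z$ is $\prox_w(x-P)$ by \eqref{prox-operator}, so the corresponding minimizer in $a$ is $z - x = \prox_w(x-P) - x = (\prox_w - \Id)(x-P) - P$, which is precisely \eqref{alpha-prox}. Since every step is an exact identity composed with a bijective change of variable, reading the chain in either direction delivers the claimed "if and only if".

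No step here is a serious obstacle, the argument being a completion of the square; the one point deserving care is the well-posedness of the $\argmin$, namely that $\prox_w$ is single-valued so that \eqref{alpha-prox} genuinely defines $\alpha_t$ as a function. The quadratic term $\frac{1}{2}|z-(x-P)|^2$ makes the objective coercive (using the lower bound $-C$ built into the definition \eqref{Q_p} of $\mathcal{Q}_2$), while convexity of $w=\bar{u}(t+1,\cdot)$ makes it strongly convex; I would invoke these facts, together with Proposition \ref{non-expensive}, to guarantee existence and uniqueness of the minimizer and hence that the identity for $\alpha_t$ is unambiguous.
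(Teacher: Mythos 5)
Your proof is correct and takes essentially the same route as the paper's: the change of variable $z = x+a$, completion of the square to recognize the objective as $g_{\bar{u}(t+1,\cdot)}(x-P(t,b),\cdot)$ up to the additive terms $F(t,x,b) - \frac{1}{2}|P(t,b)|^2$, and identification of the infimum and minimizer with $V_{\bar{u}(t+1,\cdot)}$ and $\prox_{\bar{u}(t+1,\cdot)}$. Your closing remark on single-valuedness of the $\argmin$ matches the paper's appeal to the convexity of $\bar{u}(t+1,\cdot)$ (a property which, in both your argument and the paper's, is used even though it is not literally among the lemma's hypotheses, being guaranteed only in the context where the lemma is applied).
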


\begin{proof}
Equality \eqref{bar_u_ups} is obviously equivalent to \eqref{eq_sys_0}, by the definition of $\Upsilon$.
By the change of variable
$y= x+ a$,
the dynamic programming equation \eqref{eq_sys_i} can be reformulated as follows:
\begin{align} 
u(t,x) & = \inf_{y \in \mathbb{R}^d} \Big( \frac{1}{2} \vert y-x \vert^2 + \langle (y - x), P(t,b) \rangle + F(t,x,b) + \bar{u}(t+1,y) \Big) \notag \\
& = \inf_{y \in \mathbb{R}^d} \left(  \frac{1}{2} \vert y-(x-P(t,b)) \vert^2 + \bar{u}(t+1,y) \right) + F(t,x,b) - \frac{1}{2}|P(t,b)|^2. \label{eq:u_star}
\end{align}
This proves the equivalence between \eqref{eq_sys_i} and \eqref{u-V}.
Moreover, since $\bar{u}(t+1,\cdot)$ is convex, the right-hand side of \eqref{eq:u_star} has a unique minimizer given by
\begin{equation*}
y^*:= \prox_{\bar{u}(t+1,\cdot)}(x-P(t,b))
\end{equation*}
and therefore, the unique minimizer in the right-hand side of \eqref{eq_sys_ii} is $y^*-x$, which proves the equivalence between \eqref{eq_sys_ii} and \eqref{alpha-prox}.
The lemma is proved.
\end{proof}

\begin{lmm} \label{alpha-u-in-G1-G2}
Let $b \in \mathcal{B}_2$. There exists a unique triplet $(u,\bar{u},\alpha) \in (\mathcal{G}_2)^{T+1} \times (\mathcal{G}_2)^{T+1} \times (\mathcal{G}_1 \cap 1\mathrm{-Lip})^T$ such that \eqref{eq_sys_0}-\eqref{eq_sys_term} holds true.
Moreover, for any $t \in \TT$, we have
\begin{equation} \label{eq:estim_glob_u}
u(t,\cdot) \in \mathcal{Q}^{C_u}_2,
\end{equation}
and for any $t \in \T$, we have
\begin{equation} \label{eq:estim_glob_alpha}
\alpha_t(\cdot) \in \mathcal{G}_1^{C_\alpha} \cap 1\mathrm{-Lip},
\end{equation}
for some positive constants $C_\alpha$ and $C_u$ independent of $t$ and $b$. 
\end{lmm}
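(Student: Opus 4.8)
The plan is to use the equivalent functional formulation established in Lemma \ref{u-alpha-prox-lemma} and to argue by backward induction on $t$, starting from the terminal condition \eqref{eq_sys_term}. Indeed, the three identities \eqref{bar_u_ups}--\eqref{alpha-prox} constitute an explicit backward recursion: given $u(t+1,\cdot)$, the function $\bar u(t+1,\cdot)$ is uniquely determined by \eqref{bar_u_ups}, then $u(t,\cdot)$ by \eqref{u-V}, and $\alpha_t$ by \eqref{alpha-prox}. Each of these maps produces a single well-defined object---in particular the proximal operator is single-valued because $\bar u(t+1,\cdot)$ is convex---so existence and uniqueness of the triplet follow directly from the recursion, provided each object is shown to lie in the claimed space, which is exactly what the a priori bounds below achieve.

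For the bounds I would propagate a sequence of constants $R_t$ through the recursion. The base case is $u(T,\cdot)=F(T,\cdot,b)\in\mathcal{Q}_2^{C}$ by Assumption \ref{B}(i), so $R_T=C$. For the inductive step, assume $u(t+1,\cdot)\in\mathcal{Q}_2^{R}$ with $R\geq C$. Since $\mathcal{M}_t\subseteq\mathcal{P}_2^{C}(\mathbb{R}^d)\subseteq\mathcal{P}_2^{R}(\mathbb{R}^d)$ by Remark \ref{M}, Lemma \ref{Ups} gives $\bar u(t+1,\cdot)=\Upsilon[u(t+1,\cdot)]\in\mathcal{Q}_2^{C_6(R)}$, and Lemma \ref{V-in-Q2} then yields $V_{\bar u(t+1,\cdot)}\in\mathcal{Q}_2^{C_2(C_6(R))}$. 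To pass to $u(t,\cdot)$ through \eqref{u-V} I would record a short auxiliary fact: if $w\in\mathcal{Q}_2^{S}$ and $|p|\leq C$, then $x\mapsto w(x-p)-\tfrac12|p|^2$ again belongs to $\mathcal{Q}_2^{S'}$ with $S'$ depending only on $S$ and $C$; here convexity is preserved, the lower bound degrades only by the bounded constant $\tfrac12 C^2$, and $|x-p|^2\leq 2|x|^2+2C^2$ controls the upper bound. Applying this with $w=V_{\bar u(t+1,\cdot)}$ and $p=P(t,b)$ (using $|P(t,b)|\leq C$ from Assumption \ref{B}(iv)), then adding $F(t,\cdot,b)\in\mathcal{Q}_2^{C}$ as a further convex summand with additive bounds, shows $u(t,\cdot)\in\mathcal{Q}_2^{R_t}$ for an explicit $R_t\geq C$ built from $C_2$, $C_6$, and $C$. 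In particular $u(t,\cdot)\in\mathcal{Q}_2^{R_t}\subseteq\mathcal{G}_2$, as required.

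The control $\alpha_t$ is handled directly from \eqref{alpha-prox}. The map $\prox_{\bar u(t+1,\cdot)}-\Id$ is non-expansive by Proposition \ref{non-expensive}; composing it with the translation $x\mapsto x-P(t,b)$ and subtracting the constant $P(t,b)$ preserves the Lipschitz modulus $1$, so $\alpha_t\in1\mathrm{-Lip}$. Linear growth follows from Lemma \ref{prox-in-G1}, which gives $|\prox_{\bar u(t+1,\cdot)}|\in\mathcal{G}_1$ with a constant depending only on the bound of $\bar u(t+1,\cdot)$; combined with $|P(t,b)|\leq C$ this yields $\alpha_t\in\mathcal{G}_1^{C_\alpha}$.

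Finally, since $\T$ is finite, the recursion $R_t=\Phi(R_{t+1})$ (with $\Phi$ the composition of the elementary bounds above) is iterated only $T$ times from $R_T=C$, so $C_u:=\max_{t\in\TT}R_t$ and the largest of the growth constants $C_\alpha$ depend only on $C$ and $T$. The point I would flag as the crux is uniformity in $b$: every ingredient invoked---Assumption \ref{B}(i) and (iv) and the moment bound of Remark \ref{M}---is uniform over $b\in\mathcal{B}_2$, so $C_u$ and $C_\alpha$ are indeed independent of $b$. The only genuinely delicate bookkeeping is that $C_6(R)=2R(1+R)$ grows quadratically at each step, but with $T$ fixed this merely produces a (possibly large) finite constant, which is harmless.
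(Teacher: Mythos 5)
Your proposal is correct and follows essentially the same route as the paper's proof: backward induction through the functional reformulation of Lemma \ref{u-alpha-prox-lemma}, invoking Lemma \ref{Ups}, Lemma \ref{V-in-Q2}, Lemma \ref{prox-in-G1}, Proposition \ref{non-expensive}, and Assumptions \ref{B} (i) and (iv), with uniformity in $b$ coming from the uniformity of those ingredients. The only difference is presentational: you track explicit constants $R_t$ and state the translation fact ($w(\cdot-p)-\tfrac12|p|^2 \in \mathcal{Q}_2^{S'}$) as a separate step, whereas the paper absorbs both into its generic-constant convention.
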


\begin{proof}
Since $u(T,\cdot)$ is uniquely defined by the terminal condition \eqref{eq_sys_term}, $\bar{u}(T,\cdot)$ is uniquely defined by \eqref{bar_u_ups} (with $t=T-1$). Then $u(T-1,\cdot)$ and $\alpha_{T-1}(\cdot)$ are uniquely defined by \eqref{u-V} and \eqref{alpha-prox} (with $t=T-1$) and so on, until $t=0$.

Let us prove \eqref{eq:estim_glob_u} by backward induction. The terminal condition $u(T,\cdot)= F(T,\cdot,b)$ and Assumption \ref{B} ({\normalfont \text{i}}) imply that $u(T,\cdot) \in \mathcal{Q}_2^C$, for some constant $C> 0$ (independent of $b$).
Let us take $t \in \mathcal{T}$ and let us suppose that $u(t+1,\cdot) \in \mathcal{Q}_2^C$. Then by Lemma \ref{Ups} and relation \ref{assum-Mt}, we have $\bar{u}(t,\cdot) \in \mathcal{Q}^{C}_2$.
Recall that by Lemma \ref{u-alpha-prox-lemma}, we have
\begin{equation} \label{eq:inter_glob_u}
u(t,\cdot) = V_{\bar{u}(t+1, \cdot)} (\cdot-P(t,b)) + F(t,\cdot,b) -  \frac{1}{2}|P(t,b)|^2.
\end{equation}
By Assumptions \ref{B} (i) and (iv), $F(t,\cdot,b) - \frac{1}{2} |P(t,b)|^2 \in \mathcal{Q}_2^C$. Using again Assumption \ref{B} (iv) and Lemma \ref{V-in-Q2}, we obtain that $V_{\bar{u}(t+1,\cdot)} (\cdot-P(t,b)) \in \mathcal{Q}_2^C$. Therefore, the right-hand side of \eqref{eq:inter_glob_u} lies in $\mathcal{Q}_2^C$ and finally, $u(t,\cdot) \in \mathcal{Q}_2^C$, where $C$ is independent of $b$.

Let us prove \eqref{eq:estim_glob_alpha}.
By Lemma \ref{u-alpha-prox-lemma}, we have
\begin{equation} \label{eq:repeat_alpha_prox}
\alpha_t(\cdot) = (\prox_{\bar{u}(t+1,\cdot)} - \Id)( \cdot -P(t,b)) - P(t,b).
\end{equation}
We already know that $\bar{u}(t+1,\cdot) \in \mathcal{Q}_2^C$. Moreover, by Assumption \ref{B} (iv), $P(t,b)$ is bounded. Therefore, by Lemma \ref{prox-in-G1}, $\prox_{\bar{u}(t+1,\cdot)}(\cdot- P(t,b)) \in \mathcal{G}_1^C$. Then it is easy to show that $\alpha_t(\cdot,b) \in \mathcal{G}_1^C$, where again, $C$ does not depend on $b$. Finally, $\alpha(t,\cdot)$ is non-expansive as a consequence of \eqref{eq:repeat_alpha_prox} and Proposition \ref{non-expensive}. The lemma is proved.
\end{proof}

From now on, we denote by $(u^*(\cdot,\cdot,b),\bar{u}^*(\cdot,\cdot,b),\alpha_{\cdot}^*(\cdot,b))$
the unique solution to (\ref{system},i)-(\ref{system},ii).

\begin{lmm} \label{u-d1}
There exists $C>0$  such that for any $(t,b_1,b_2) \in \T \times \mathcal{B}_2 \times \mathcal{B}_2$,
  \begin{align} \label{ub1-ub2-G2}
& \|u^\star(t,\cdot , b_1) -u^\star(t,\cdot , b_2)\|_{\mathcal{G},2}  \leq  C d_1(b_1,b_2), \\
 & \|\bar{u}^\star(t,\cdot , b_1) -\bar{u}^\star(t,\cdot , b_2)\|_{\mathcal{G},2}  \leq  C d_1(b_1,b_2). \label{bar-ub1-ub2-G2}
 \end{align}
\end{lmm}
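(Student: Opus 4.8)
The plan is to prove both estimates simultaneously by backward induction on $t$, exploiting the explicit representations \eqref{bar_u_ups} and \eqref{u-V} of $\bar{u}^\star$ and $u^\star$ obtained in Lemma \ref{u-alpha-prox-lemma}, together with the a priori bounds $u^\star(t,\cdot,b) \in \mathcal{Q}_2^{C_u}$ and $\bar{u}^\star(t,\cdot,b)\in\mathcal{Q}_2^{C}$ furnished (uniformly in $b$) by Lemma \ref{alpha-u-in-G1-G2} and its proof. The base case is the terminal time: since $u^\star(T,\cdot,b)=F(T,\cdot,b)$, inequality \eqref{ub1-ub2-G2} at $t=T$ is exactly Assumption \ref{B}(ii). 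Throughout, $C$ denotes the generic constant of the paper, depending only on the data and $T$, whose value may grow along the $T$ induction steps but stays bounded.

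For the inductive step, suppose \eqref{ub1-ub2-G2} holds at time $t+1$. I first derive \eqref{bar-ub1-ub2-G2}: by \eqref{bar_u_ups} one has $\bar{u}^\star(t+1,\cdot,b_i)=\Upsilon[u^\star(t+1,\cdot,b_i)]$, and since both arguments lie in $\mathcal{Q}_2^{C_u}$ and $\mathcal{M}_t\subseteq\mathcal{P}_2^C$, the Lipschitz continuity of $u \mapsto \Upsilon[u]$ with modulus $2(1+C_u)$ (Lemma \ref{Ups}) yields
$$\|\bar{u}^\star(t+1,\cdot,b_1)-\bar{u}^\star(t+1,\cdot,b_2)\|_{\mathcal{G},2} \leq 2(1+C_u)\,\|u^\star(t+1,\cdot,b_1)-u^\star(t+1,\cdot,b_2)\|_{\mathcal{G},2} \leq C\, d_1(b_1,b_2),$$
which is \eqref{bar-ub1-ub2-G2} at index $t+1$. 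To obtain \eqref{ub1-ub2-G2} at time $t$, I abbreviate $\bar{u}_i := \bar{u}^\star(t+1,\cdot,b_i)$ and $P_i := P(t,b_i)$, and split the difference coming from \eqref{u-V} into four terms,
$$u^\star(t,x,b_1)-u^\star(t,x,b_2) = \big(V_{\bar{u}_1}(x-P_1)-V_{\bar{u}_2}(x-P_1)\big) + \big(V_{\bar{u}_2}(x-P_1)-V_{\bar{u}_2}(x-P_2)\big) + \big(F(t,x,b_1)-F(t,x,b_2)\big) - \tfrac{1}{2}\big(|P_1|^2-|P_2|^2\big),$$
isolating the dependence through the envelope, through the shifted argument, through $F$, and through the price penalty.

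I then estimate the four terms in the $\|\cdot\|_{\mathcal{G},2}$ norm. Since $\bar{u}_1,\bar{u}_2 \in \mathcal{Q}_2^{C}$, the envelope term is controlled by Lemma \ref{Vu-Vv}, which gives $|V_{\bar{u}_1}(z)-V_{\bar{u}_2}(z)| \leq C\,\|\bar{u}_1-\bar{u}_2\|_{\mathcal{G},2}(1+|z|^2)$; taking $z=x-P_1$ and using $|P_1|\leq C$ (Assumption \ref{B}(iv)) to absorb the shift via $1+|x-P_1|^2 \leq C(1+|x|^2)$, this term is bounded by $C\,d_1(b_1,b_2)(1+|x|^2)$ thanks to \eqref{bar-ub1-ub2-G2}. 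The shifted-argument term is handled by the Lipschitz-in-$x$ estimate of the Moreau envelope \eqref{vux-vuy}, giving $C_5(C_u)(1+|x-P_1|+|x-P_2|)\,|P_1-P_2|$; boundedness of the prices turns the first factor into $C(1+|x|)\leq C(1+|x|^2)$ and Assumption \ref{B}(iii) turns $|P_1-P_2|$ into $C\,d_1(b_1,b_2)$. The $F$-term is directly Assumption \ref{B}(ii), and the price-penalty term equals $\tfrac12|P_1-P_2|\,|P_1+P_2| \leq C\,d_1(b_1,b_2)$ by Assumptions \ref{B}(iii)--(iv), which is constant in $x$. Summing the four bounds, dividing by $1+|x|^2$ and taking the supremum over $x$ gives \eqref{ub1-ub2-G2} at time $t$, closing the induction. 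The main obstacle is the second term: unlike the other three, the belief enters through the \emph{argument} of $V_{\bar{u}_2}$ rather than through a coefficient, so one cannot use the norm estimate \eqref{C4} and must instead invoke the pointwise modulus-of-continuity bound \eqref{vux-vuy}, carefully converting the resulting linear weight $1+|x|$ into the quadratic weight $1+|x|^2$ built into the $\mathcal{G},2$ norm.
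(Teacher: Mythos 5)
Your proof is correct and follows essentially the same route as the paper's: backward induction with the same decomposition of $u^\star(t,\cdot,b_1)-u^\star(t,\cdot,b_2)$ (the paper merely groups your $F$-term and price-penalty term into a single term $a_3$), the same passage from $u^\star$ to $\bar{u}^\star$ via Lemma \ref{Ups}, and the same use of Lemma \ref{Vu-Vv} for the envelope difference and of the pointwise estimate \eqref{vux-vuy} of Lemma \ref{Vu-xy} for the shifted-argument term. Incidentally, your citation of Assumption \ref{B}(ii) at the base case $t=T$ is the right one; the paper's reference to (iii) there appears to be a typo.
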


\begin{proof}
In the proof, all constants $C$ are independent of $b_1$ and $b_2$.
We proceed by backward induction. By Assumption \ref{B} ({\normalfont \text{iii}}) and by the terminal condition $u^\star(T,\cdot,b)= F(T,\cdot,b)$, inequality \eqref{ub1-ub2-G2} holds true for $t= T$.
Let $t \in \T$. Suppose that
\begin{equation*}
\|u^\star(t+1,\cdot , b_1) - u^\star(t+1,\cdot , b_2)\|_{\mathcal{G},2} \leq C d_1(b_1,b_2),
\end{equation*}
for some positive constant $C>0$ independent of $b_1$ and $b_2$. By Remark \ref{M} and Lemma \ref{Ups}, we deduce that
\begin{equation} \label{assumption-ubar_rec}
\| \bar{u}^\star(t+1,\cdot , b_1) - \bar{u}^\star(t+1,\cdot , b_2)\|_{\mathcal{G},2} \leq C d_1(b_1,b_2).
\end{equation}
By Lemma \ref{u-alpha-prox-lemma}, we have
\begin{equation} \label{ustar-ustar_rec}
u^\star(t,x , b_1) -u^\star(t,x , b_2)  =    a_1(t,x,b_1,b_2) +  a_2(t,x,b_1,b_2) +  a_3(t,x,b_1,b_2),
\end{equation}
where
\begin{align*}
a_1(t,x,b_1,b_2) &:= V_{\bar{u}^\star(t+1,\cdot , b_1)}(x-P(t,b_1)) - V_{\bar{u}^\star(t+1,\cdot, b_2)}(x-P(t,b_1)),\\
a_2(t,x,b_1,b_2) &:= V_{\bar{u}^\star(t+1,\cdot, b_2)}(x-P(t,b_1)) - V_{\bar{u}^\star(t+1,\cdot, b_2)}(x-P(t,b_2)),\\
a_3(t,x,b_1,b_2) &:=F(t,x,b_1) - F(t,x,b_2) + \frac{1}{2}(|P(t,b_2)|^2  -  |P(t,b_1)|^2).
\end{align*}
It remains to bound $a_1(t,\cdot,b_1,b_2)$, $a_2(t,\cdot,b_1,b_2)$, and $a_3(t,\cdot,b_1,b_2)$ in $\mathcal{G}_2^C$.
We deduce from Lemma \ref{Vu-Vv}, Assumption \ref{B} (iv), and estimate \eqref{assumption-ubar_rec}, that
\begin{align*}
|a_1(t,x,b_1,b_2)| &\leq \|V_{\bar{u}^\star(t+1,\cdot , b_1)} - V_{\bar{u}^\star(t+1,\cdot, b_2)}\|_{\mathcal{G},2}(1+|x-P(t,b_1)|^2) \\
& \leq C\|\bar{u}^\star(t+1,\cdot , b_1) - \bar{u}^\star(t+1,\cdot, b_2)\|_{\mathcal{G},2}(1+|x|^2)\\
& \leq  C d_1(b_1,b_2)(1+|x|^2).
\end{align*}
Then by Lemma \ref{Vu-xy} and Assumption \ref{B} ({\normalfont \text{iv}}), we have
\begin{align*}
|a_2(t,x,b_1,b_2)| &\leq C(1+|x-P(t,b_1)| + |x-P(t,b_2)|)|P(t,b_2)-P(t,b_1)|\\
& \leq C(1+|x|)d_1(b_1,b_2) \\
& \leq C(1+ |x|^2) d_1(b_1,b_2).
\end{align*}
Finally by Assumption \ref{B} ({\normalfont \text{ii-iv}}), we have
\begin{align*}
 |a_3(t,x,b_1,b_2)|& \leq \|F(t,\cdot,b_1)-F(t,\cdot,b_2)\|_{\mathcal{G},2}(1+|x|^2) + C | P(t,b_1)-P(t,b_2) |\\
 & \leq C(1+|x|^2)d_1(b_1,b_2).
\end{align*}
Then combining \eqref{ustar-ustar_rec} and the three estimates of $a_1$, $a_2$, and $a_3$, we obtain that
\begin{equation} \nonumber
\|u^\star(t,\cdot , b_1) -u^\star(t,\cdot , b_2)\|_{\mathcal{G},2}  \leq  C d_1(b_1,b_2),
\end{equation}
which concludes the proof.
\end{proof}

\begin{lmm} \label{alpha-d1}
There exists $C>0$ such that for any $(t,b_1,b_2) \in \T \times \mathcal{B}_2 \times \mathcal{B}_2$,
\begin{align} \label{eq:alpha_d1}
\| \alpha^\star_t(\cdot , b_1) -  \alpha^\star_t(\cdot , b_2) \|_{\mathcal{G},1} \leq C \left(d_1(b_1,b_2)^{1/2} + d_1(b_1,b_2)\right).
\end{align}
\end{lmm}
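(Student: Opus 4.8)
The plan is to exploit the explicit representation of the optimal feedback in terms of the proximal operator. By Lemma \ref{u-alpha-prox-lemma}, for each belief $b$ we have
\begin{equation} \nonumber
\alpha_t^\star(x,b) = (\prox_{\bar{u}^\star(t+1,\cdot,b)} - \Id)(x - P(t,b)) - P(t,b).
\end{equation}
Writing $\bar{u}_i := \bar{u}^\star(t+1,\cdot,b_i)$ and $P_i := P(t,b_i)$ for $i = 1,2$, I would decompose the difference $\alpha_t^\star(x,b_1) - \alpha_t^\star(x,b_2)$ into three terms by inserting $\pm(\prox_{\bar{u}_2} - \Id)(x-P_1)$: a first term $A(x) := \prox_{\bar{u}_1}(x-P_1) - \prox_{\bar{u}_2}(x-P_1)$ measuring the sensitivity of the proximal operator with respect to the function (here the $-\Id$ contributions cancel, since both are evaluated at the same point $x - P_1$); a second term $B(x) := (\prox_{\bar{u}_2} - \Id)(x-P_1) - (\prox_{\bar{u}_2} - \Id)(x-P_2)$ measuring the sensitivity with respect to the evaluation point; and a residual term $P_2 - P_1$.

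For the first term, I would invoke the H\"older-type estimate of Lemma \ref{QinH}, together with the uniform bound $\bar{u}_i \in \mathcal{Q}_2^C$ established in Lemma \ref{alpha-u-in-G1-G2}, to get $|A(x)| \leq C \|\bar{u}_1 - \bar{u}_2\|_{\mathcal{G},2}^{1/2}(1 + |x - P_1|)$. Using the uniform price bound of Assumption \ref{B} (iv) to replace $1+|x - P_1|$ by $C(1+|x|)$, and the continuity estimate \eqref{bar-ub1-ub2-G2} of Lemma \ref{u-d1} to bound $\|\bar{u}_1 - \bar{u}_2\|_{\mathcal{G},2} \leq C d_1(b_1,b_2)$, I obtain $\|A\|_{\mathcal{G},1} \leq C d_1(b_1,b_2)^{1/2}$; this is precisely the source of the square-root term in \eqref{eq:alpha_d1}. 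For the second term, Proposition \ref{non-expensive} asserts that $\prox_{\bar{u}_2} - \Id$ is non-expansive, so $|B(x)| \leq |P_1 - P_2|$, and Assumption \ref{B} (iii) yields $|B(x)| \leq C d_1(b_1,b_2)$, hence $\|B\|_{\mathcal{G},1} \leq C d_1(b_1,b_2)$. The residual $P_2 - P_1$ is controlled in the same way by Assumption \ref{B} (iii). Summing the three contributions and using that $1+|x| \geq 1$ to pass to the $\mathcal{G}_1$ norm gives the claimed bound.

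The main subtlety is the square-root loss in the first term: because the dependence of the minimizer $\prox_{\bar{u}}$ on $\bar{u}$ is only H\"older-$1/2$ continuous (Lemma \ref{QinH}), one cannot obtain a fully Lipschitz estimate of $\alpha_t^\star$ in $b$, and this is what forces the term $d_1(b_1,b_2)^{1/2}$ to appear alongside the Lipschitz term $d_1(b_1,b_2)$ coming from the price perturbations. Everything else is a routine application of the already established continuity of the value functions and of the uniform bounds; the only care needed is to track the weight $1 + |x|$ throughout, which is handled by the uniform boundedness of the prices.
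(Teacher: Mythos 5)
Your proposal is correct and follows essentially the same route as the paper's proof: the same representation $\alpha^\star_t(\cdot,b) = (\prox_{\bar{u}^\star(t+1,\cdot,b)} - \Id)(\cdot - P(t,b)) - P(t,b)$ from Lemma \ref{u-alpha-prox-lemma}, the same splitting into a prox-function-perturbation term (bounded via Lemma \ref{QinH}, estimate \eqref{bar-ub1-ub2-G2}, and Assumption \ref{B} (iv), giving the $d_1(b_1,b_2)^{1/2}$ term) and evaluation-point/price terms (bounded via Proposition \ref{non-expensive} and Assumption \ref{B} (iii), giving the $d_1(b_1,b_2)$ term). Your three-term decomposition differs from the paper's two terms $a_4$, $a_5$ only by where the cancelling $\Id$ and price contributions are grouped, which is immaterial.
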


\begin{proof}
Let $(t,b_1,b_2) \in \T \times \mathcal{B}_2 \times \mathcal{B}_2$.
By Lemma \ref{u-alpha-prox-lemma}, we have
\begin{equation} \label{eq:var_alpha}
\alpha^\star_t(x,b_1) - \alpha^\star_t(x,b_2)
= a_4(t,x,b_1,b_2) + a_5(t,x,b_1,b_2),
\end{equation}
where
\begin{align*}
a_4(t,x,b_1,b_2)= \ & \prox_{\bar{u}^\star(t+1,\cdot, b_1)} (x-P(t,b_1)) -  \prox_{\bar{u}^\star(t+1,\cdot , b_2)} (x-P(t,b_1)), \\
a_5(t,x,b_1,b_2)= \ & \prox_{\bar{u}^\star(t+1,\cdot , b_2)} (x-P(t,b_1)) -  \prox_{\bar{u}^\star(t+1,\cdot , b_2)} (x-P(t,b_2)).
\end{align*}
Using successively Lemma \ref{QinH}, Assumption \ref{B} (iv), and estimate \eqref{bar-ub1-ub2-G2}, we obtain
\begin{align*}
|a_4(t,x,b_1,b_2)|
\leq \ & \| \prox_{\bar{u}^\star(t+1,\cdot, b_1)} - \prox_{\bar{u}^\star(t+1,\cdot , b_2)} \|_{\mathcal{G},1} (1 + |x-P(t,b_1|) \\
\leq \ & C \| \bar{u}^\star(t+1,\cdot, b_1) - \bar{u}^\star(t+1,\cdot , b_2) \|^{1/2} (1 + |x|) \\
\leq \ & C \| d_1(b_1,b_2) \|^{1/2} (1 + |x|).
\end{align*}
Moreover, since $\prox_{\bar{u}^\star(t+1,\cdot , b_2)}$ is non-expansive, we have with Assumption \ref{B} (iii) that
\begin{align*}
|a_5(t,x,b_1,b_2)|
\leq |(x-P(t,b_1)) -(x-P(t,b_2))| \leq d_1(b_1,b_2).
\end{align*}
Combining the two obtained estimates of $a_4$ and $a_5$ with \eqref{eq:var_alpha}, we obtain \eqref{eq:alpha_d1}.
\end{proof}

\subsection{Kolmogorov mapping \label{Kolmo-mapping}}

We study now the Kolmogorov mapping
\begin{equation*}
(\mathcal{G}_1^{C_\alpha} \cap 1\mathrm{-Lip})^{T} \ni \alpha \mapsto (m^\star,\mu^\star,b^\star)(\alpha),
\end{equation*}
where $(m^\star,\mu^\star,b^\star)$ is the solution to (\ref{system},iii-v).
\begin{lmm} \label{kolmogorov-reg}
There exists $C_b>0$ such that for any $\alpha \in (\mathcal{G}_1^{C_\alpha} \cap 1\mathrm{-Lip})^T $,
\begin{align*}
m^\star(\alpha) \in \big(\mathcal{P}^{C_{b}}_2(\mathbb{R}^d) \big)^{T+1}, \quad
\mu^\star(\alpha) \in  \big( \mathcal{P}^{C_{b}}_2(\mathbb{R}^{2d}) \big)^{T}, \quad \text{and} \quad
b^\star(\alpha) \in \mathcal{B}^{C_{b}}_2.
\end{align*}
In addition the three mappings $m^\star,\mu^{\star}$ and $b^\star$ are continuous.
\end{lmm}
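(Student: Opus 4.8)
The plan is to establish the moment bounds first and then continuity, exploiting the recursive structure of the Kolmogorov equation (\ref{system},iii) together with the technical lemmas of Section \ref{toolbox}. For the moment bounds, I would argue by induction on $t \in \TT$. The base case is $m^\star(0) = \bar{m} \in \mathcal{P}_2^C(\mathbb{R}^d)$ by Assumption \ref{A}. For the inductive step, assume $m^\star(t) \in \mathcal{P}_2^{C_t}(\mathbb{R}^d)$. Since $\alpha_t \in \mathcal{G}_1^{C_\alpha} \cap 1\text{-Lip}$, the map $\Id + \alpha_t$ lies in $\mathcal{G}_1^{C'}$ for a suitable $C'$ (the identity contributes linear growth), so Lemma \ref{g-sharp-m} yields $(\Id + \alpha_t)\sharp m^\star(t) \in \mathcal{P}_2^{q}(\mathbb{R}^d)$ for an explicit $q$ depending on $C_t$, $C_\alpha$. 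Convolving with $\nu(t) \in \mathcal{P}_2^C(\mathbb{R}^d)$ (Assumption \ref{A}) and applying Lemma \ref{convol-p1}, I obtain $m^\star(t+1) \in \mathcal{P}_2^{2^2 q}(\mathbb{R}^d)$. Since $T$ is finite, the constants accumulated over the finitely many steps are bounded by some $C_b$ independent of $\alpha$ (because $C_\alpha$ is fixed on the domain), giving the claimed uniform bound for $m^\star$. The bound for $\mu^\star(t) = (\Id,\alpha_t)\sharp m^\star(t)$ follows the same way, since $(\Id,\alpha_t) \in \mathcal{G}_1^{C''}$, and the bound on $b^\star$ is immediate from the definitions of $\mu^\star$, $m^\star$ and $\mathcal{B}_2^{C_b}$.

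For continuity, I would show that each of the three mappings is continuous as a map from $(\mathcal{G}_1^{C_\alpha} \cap 1\text{-Lip})^T$ (with the $\|\cdot\|_{\mathcal{G},1}$-product topology) into the respective $d_1$-metric space. The key is again a recursive estimate on $m^\star(t)$. I would prove by induction that there is a constant $C$ with
\begin{equation} \nonumber
d_1\big(m^\star(t;\alpha^1), m^\star(t;\alpha^2)\big) \leq C \sum_{s=0}^{t-1} \|\alpha_s^1 - \alpha_s^2\|_{\mathcal{G},1}.
\end{equation}
The base case is trivial since $m^\star(0) = \bar{m}$ for every $\alpha$. For the step, I would write $m^\star(t+1) = \nu(t) \ast [(\Id + \alpha_t)\sharp m^\star(t)]$ and use that convolution with the fixed measure $\nu(t)$ is non-expansive for $d_1$ (the second assertion of Lemma \ref{convol-p1}). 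It then remains to control $d_1\big((\Id + \alpha_t^1)\sharp m^\star(t;\alpha^1), (\Id + \alpha_t^2)\sharp m^\star(t;\alpha^2)\big)$, which is exactly the quantity estimated by inequality \eqref{d1mg} in Lemma \ref{g-sharp-m}: it is bounded by $(1+C)\|\alpha_t^1 - \alpha_t^2\|_{\mathcal{G},1} + C\, d_1(m^\star(t;\alpha^1), m^\star(t;\alpha^2))$, where the first term is the new control on the feedback and the second is handled by the induction hypothesis. This delivers the recursive bound, and summing over the finitely many time steps gives Lipschitz (hence continuous) dependence of $m^\star$ on $\alpha$.

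The continuity of $\mu^\star$ and $b^\star$ then follows directly: for $\mu^\star(t) = (\Id,\alpha_t)\sharp m^\star(t)$, a further application of Lemma \ref{g-sharp-m} bounds $d_1(\mu^\star(t;\alpha^1),\mu^\star(t;\alpha^2))$ by $\|\alpha_t^1 - \alpha_t^2\|_{\mathcal{G},1}$ plus $d_1(m^\star(t;\alpha^1),m^\star(t;\alpha^2))$, both already controlled; and $b^\star$ inherits continuity componentwise by its definition in (\ref{system},v). The main obstacle I anticipate is bookkeeping rather than conceptual: one must verify that $\Id + \alpha_t$ and $(\Id,\alpha_t)$ genuinely belong to the weighted classes $\mathcal{G}_1^{C'}$ and retain a uniform Lipschitz modulus so that Lemma \ref{g-sharp-m} applies with constants independent of $\alpha$, and one must track how the moment constants grow through the $T$ compositions to confirm they stay uniformly bounded over the whole domain. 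Once the applicability of Lemmas \ref{convol-p1} and \ref{g-sharp-m} is secured with uniform constants, both the regularity and the continuity claims reduce to the finite recursions described above.
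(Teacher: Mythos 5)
Your proposal is correct and follows essentially the same route as the paper: induction on $t$ via the Kolmogorov recursion, with Assumption \ref{A} for the base case, Lemma \ref{g-sharp-m} for the push-forward step, and Lemma \ref{convol-p1} for the convolution step, then treating $\mu^\star$ and $b^\star$ by a further application of Lemma \ref{g-sharp-m}. The only difference is cosmetic: where the paper concludes continuity qualitatively ``by composition,'' you make it quantitative with an explicit Lipschitz estimate $d_1(m^\star(t;\alpha^1),m^\star(t;\alpha^2)) \leq C \sum_{s=0}^{t-1}\|\alpha^1_s-\alpha^2_s\|_{\mathcal{G},1}$, which is a valid (indeed slightly stronger) way to carry out the same induction.
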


\begin{proof}
Let $\alpha \in (\mathcal{G}_1^{C_\alpha} \cap 1\mathrm{-Lip})^T $. All constants $C$ in the proof are independent of $\alpha$. Let us first prove by induction that for any $t \in \TT$, there exists a constant $C>0$ independent of $\alpha$ such that $m^\star(t,\cdot,\alpha) \in \mathcal{P}_2^C(\mathbb{R}^d)$ and such that, $m^\star(t,\cdot,\alpha)$ is continuous with respect to $\alpha$.
The claim is clear for $t=0$, since $m^\star(0,\cdot,\alpha)= \bar{m} \in \mathcal{P}_2^C(\mathbb{R}^d)$, by Assumption \ref{A}. Now, let us assume that the claim holds true for some $t \in \T$.
We recall that
\begin{equation*}
m^\star(t+1,\cdot,\alpha)
= \nu(t) \ast \left[
(id+\alpha_t) \sharp m^\star(t,\cdot,\alpha)
\right].
\end{equation*}
Since $\nu(t) \in \mathcal{P}_2^C(\mathbb{R}^d)$ (by Assumption \ref{A}) and since $\alpha_t \in \mathcal{G}_1^{C_\alpha} \cap 1\mathrm{-Lip}$, we obtain with Lemma \ref{convol-p1} and Lemma \ref{g-sharp-m} that $m^\star(t+1,\cdot,\alpha) \in \mathcal{P}_2^C(\mathbb{R}^d)$ and that $m^\star(t+1,\cdot,\alpha)$ is a continuous function of $\alpha$, by composition.

It remains to justify the boundedness of $\mu^*$ and $b^*$. We recall that for any $t \in \T$,
\begin{equation*}
\mu^\star(t,\cdot,\alpha)
= (id,\alpha_t) \sharp m^\star(t,\cdot,\alpha).
\end{equation*}
We deduce from Lemma \ref{g-sharp-m} that $\mu^\star(t,\cdot,\alpha) \in \mathcal{P}_2^C(\mathbb{R}^{2d})$ and that $\mu^\star(t,\cdot,\alpha)$ is a continuous function of $\alpha$, by composition.
It immediately follows that $b^\star(\alpha) \in \mathcal{B}_2^C$ and that $b$ is continuous.
\end{proof}

\subsection{Existence of equilibrium \label{existence-eq}}

We are ready to prove the existence of a solution of system \eqref{system}. The proof relies on the Schauder fixed point theorem, that we first recall.

\begin{thrm} \label{Schauder} (Schauder)
Let $C$ be a convex and compact set in a Banach space $X$, and let $T \colon C \to C$ be a continuous mapping. Then $T$ has a fixed point, i.e. there exists $x \in C$ such that 
\begin{equation*}
T(x) = x.
\end{equation*}
\end{thrm}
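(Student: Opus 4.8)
The plan is to reduce this infinite-dimensional statement to Brouwer's fixed point theorem through a finite-dimensional approximation, exploiting the compactness of $C$. A direct application of Brouwer is impossible here because the closed unit ball of an infinite-dimensional Banach space fails to be compact; the idea is therefore to build, for each $\varepsilon > 0$, a continuous map that approximates $T$ but takes values in a finite-dimensional convex compact set, apply Brouwer there, and then pass to the limit $\varepsilon \to 0$.

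First I would use the compactness of $C$ to extract, for a given $\varepsilon > 0$, a finite $\varepsilon$-net $\{ x_1, \dots, x_n \} \subseteq C$, so that the open balls of radius $\varepsilon$ centered at the $x_i$ cover $C$. I would then introduce the Schauder projection $P_\varepsilon \colon C \to \mathrm{conv}(x_1,\dots,x_n)$ defined by
\begin{equation*}
P_\varepsilon(x) := \frac{\sum_{i=1}^n \lambda_i(x)\, x_i}{\sum_{i=1}^n \lambda_i(x)}, \qquad \lambda_i(x) := \max\{0,\, \varepsilon - \| x - x_i \| \}.
\end{equation*}
The denominator never vanishes because the $x_i$ form an $\varepsilon$-net, each $\lambda_i$ is continuous, and hence $P_\varepsilon$ is continuous; moreover $\lambda_i(x) = 0$ whenever $\| x - x_i \| \geq \varepsilon$, which yields the key uniform estimate $\| P_\varepsilon(x) - x \| \leq \varepsilon$ for every $x \in C$. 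Since $C$ is convex and each $x_i$ lies in $C$, the finite-dimensional simplex $C_\varepsilon := \mathrm{conv}(x_1,\dots,x_n)$ is a convex compact subset of $C$.

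Next I would consider the composite map $T_\varepsilon := P_\varepsilon \circ T$ restricted to $C_\varepsilon$. It is continuous and maps $C_\varepsilon$ into itself, so Brouwer's fixed point theorem applies in the finite-dimensional subspace spanned by the $x_i$ and yields a point $x_\varepsilon \in C_\varepsilon$ with $T_\varepsilon(x_\varepsilon) = x_\varepsilon$. Combining this with the projection estimate gives
\begin{equation*}
\| x_\varepsilon - T(x_\varepsilon) \| = \| P_\varepsilon(T(x_\varepsilon)) - T(x_\varepsilon) \| \leq \varepsilon.
\end{equation*}
Finally, taking $\varepsilon = 1/k$ produces a sequence $(x_k)_k$ in the compact set $C$ with $\| x_k - T(x_k) \| \to 0$. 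By compactness I would extract a subsequence converging to some $x^\star \in C$; the continuity of $T$ then forces $T(x^\star) = x^\star$, which is the desired fixed point.

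The main obstacle is the construction and analysis of the finite-dimensional approximation: one must verify both that the Schauder projection is well defined and continuous (which hinges on the net property keeping the denominator positive) and that it approximates the identity uniformly on $C$ with the controlled error $\varepsilon$. Once these properties are in place, Brouwer's theorem and the routine compactness argument close the proof. In the present context this is a classical result recalled purely as a tool, so it would in fact suffice to cite a standard reference rather than reproduce the argument.
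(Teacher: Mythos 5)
Your proof is correct: it is the classical Schauder-projection argument (finite $\varepsilon$-net, the projection $P_\varepsilon$ with the uniform estimate $\|P_\varepsilon(x)-x\|\leq\varepsilon$, Brouwer's theorem on the finite-dimensional convex compact set $\mathrm{conv}(x_1,\dots,x_n)$, and a compactness-plus-continuity passage to the limit), and every step you outline goes through. Note, however, that the paper does not prove this statement at all --- it merely recalls the theorem as a classical tool before applying it in Theorem \ref{ex} --- so your closing observation that a citation to a standard reference would suffice is precisely the treatment the paper adopts.
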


\begin{thrm} \label{ex}
There exists $(u,\alpha,m,\mu,b) \in \big( \mathcal{G}_2^{C_u} \big)^T \times \big( \mathcal{G}_1^{C_\alpha} \cap 1\mathrm{-Lip} \big)^T \times \big( \mathcal{P}^{C_{b}}_2(\mathbb{R}^d) \big)^{T+1} \times \big( \mathcal{P}^{C_{b}}_2(\mathbb{R}^{2d}) \big)^{T} \times \mathcal{B}^{C_{b}}_2$ solution to system {\normalfont(\ref{system})}, where $C_u$, $C_\alpha$ and $C_b$ are the constants obtained in Lemma \ref{alpha-u-in-G1-G2} and Lemma \ref{kolmogorov-reg}.
\end{thrm}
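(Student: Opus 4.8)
The plan is to recast the entire system \eqref{system} as a single fixed-point equation in the belief variable and to solve it with the Schauder theorem (Theorem \ref{Schauder}). I would define the composite map $\Phi := b^\star \circ \alpha^\star$, where $b \mapsto \alpha^\star(b)$ is the feedback produced by the dynamic programming mapping and $\alpha \mapsto b^\star(\alpha)$ is the belief produced by the Kolmogorov mapping. By Lemma \ref{alpha-u-in-G1-G2}, $\alpha^\star(b) \in (\mathcal{G}_1^{C_\alpha}\cap 1\mathrm{-Lip})^T$ for every $b \in \mathcal{B}_2$, with $C_\alpha$ independent of $b$, so $\alpha^\star(b)$ always lies in the domain of $b^\star$; and by Lemma \ref{kolmogorov-reg}, $b^\star(\alpha) \in \mathcal{B}_2^{C_b}$ for every admissible $\alpha$, with $C_b$ independent of $\alpha$. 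Hence $\Phi$ sends all of $\mathcal{B}_2$, and in particular the subset $K := \mathcal{B}_2^{C_b}$, into $\mathcal{B}_2^{C_b}$, so $\Phi$ restricts to a self-map of $K$. A fixed point $b = \Phi(b)$ then yields, upon setting $\alpha := \alpha^\star(b)$, $u := u^\star(b)$, and $(m,\mu) := (m^\star,\mu^\star)(\alpha)$, a solution of the coupled system.

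The main obstacle is to verify that $K = \mathcal{B}_2^{C_b}$ is a convex and compact subset of a Banach space for the topology induced by $d_1$. I would view each factor $\mathcal{P}_2^{C_b}$ as a subset of the Banach space of finite signed measures with finite first moment, endowed with the Kantorovich--Rubinstein norm, whose induced distance on probability measures is exactly $d_1$; then $K$ is a subset of the corresponding finite product space. Convexity is immediate, since the constraint $\int_{\mathbb{R}^d} |x|^2 \, \dd m \leq C_b$ is preserved under convex combinations. For compactness, the uniform second-moment bound gives tightness through Markov's inequality, hence relative compactness for the narrow topology by Prokhorov's theorem; on a set of uniformly bounded second moments, the resulting uniform integrability of $x \mapsto |x|$ ensures that narrow convergence coincides with $d_1$-convergence, while the lower semicontinuity of $m \mapsto \int_{\mathbb{R}^d} |x|^2 \, \dd m$ under narrow convergence shows that the bound $\int_{\mathbb{R}^d} |x|^2 \, \dd m \leq C_b$ passes to the limit. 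Thus each factor $\mathcal{P}_2^{C_b}$ is $d_1$-closed and $d_1$-compact, and $K$ is compact as a finite product.

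With compactness established, I would obtain continuity of $\Phi|_K$ for $d_1$ by composition: the map $b \mapsto \alpha^\star(b)$ is continuous from $(\mathcal{B}_2, d_1)$ into $(\mathcal{G}_1^{C_\alpha}\cap 1\mathrm{-Lip}, \|\cdot\|_{\mathcal{G},1})$ by the H\"older-type estimate of Lemma \ref{alpha-d1}, and the map $\alpha \mapsto b^\star(\alpha)$ is continuous by Lemma \ref{kolmogorov-reg}. Since $K$ is convex and compact and $\Phi|_K$ is continuous and $K$-valued, Theorem \ref{Schauder} provides $b \in K$ with $\Phi(b) = b$. Finally I would unpack this fixed point into $(u,\alpha,m,\mu,b)$ as above: that each component satisfies the corresponding equation of \eqref{system} and lies in the announced space is exactly the content of Lemma \ref{alpha-u-in-G1-G2} (for $u$ and $\alpha$, giving $u(t,\cdot) \in \mathcal{Q}_2^{C_u} \subset \mathcal{G}_2^{C_u}$ and $\alpha_t \in \mathcal{G}_1^{C_\alpha}\cap 1\mathrm{-Lip}$) and Lemma \ref{kolmogorov-reg} (for $m$, $\mu$, and $b$), which completes the proof.
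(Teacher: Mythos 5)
Your proposal is correct and follows essentially the same route as the paper: the same fixed-point reformulation $b = b^\star \circ \alpha^\star(b)$ on $K = \mathcal{B}_2^{C_b}$, continuity of the composite map via Lemma \ref{alpha-d1} and Lemma \ref{kolmogorov-reg}, an application of Theorem \ref{Schauder}, and the unpacking of the fixed point into a solution lying in the announced spaces by Lemma \ref{alpha-u-in-G1-G2} and Lemma \ref{kolmogorov-reg}. The only difference is that where the paper simply cites \cite[Lemma 25]{laurent-variationnal} for the $d_1$-compactness of $\mathcal{B}_2^{C_b}$, you supply a self-contained argument (tightness from the uniform second-moment bound and Prokhorov's theorem, uniform integrability to upgrade narrow convergence to $d_1$-convergence, and narrow lower semicontinuity of the second moment for closedness), which is a sound filling-in of that cited step.
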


\begin{proof}
By Lemma \ref{alpha-d1} and Lemma \ref{kolmogorov-reg}, the mapping
\begin{equation*}
\mathcal{B}_2^{C_b} \ni b \mapsto b^\star \circ \alpha^\star(b) \in \mathcal{B}_2^{C_b}
\end{equation*}
is continuous for the distance $d_1$. Moreover, $\mathcal{B}_2^{C_b}$ is compact for $d_1$, see \cite[Lemma 25]{laurent-variationnal}. Therefore, by the Schauder fixed point theorem, there exists $\bar{b} \in \mathcal{B}_2^{C_b}$ such that $\bar{b}= b^\star \circ \alpha^\star(\bar{b})$.
Let us set $\bar{u}= u^\star(\bar{b})$, $\bar{\alpha}= \alpha^\star(\bar{b})$, $\bar{m}= m^\star(\bar{\alpha})$, and $\bar{\mu}= \mu^\star(\bar{\alpha})$. Then $(\bar{u},\bar{\alpha},\bar{m},\bar{\mu},\bar{b})$ is solution to (\ref{system}) and lies in the announced set.
\end{proof}

\section{Connection with a finite player game \label{section-6}} 

In this section we establish a connection between the coupled system \eqref{system} and a dynamic game with $N$ players. 
More precisely, we fix a solution $(\bar{u},\bar{\alpha},\bar{m},\bar{\mu},\bar{b})$ of system (\ref{system}) and consider the situation where each of the $N$ players adopts the feedback $\bar{\alpha}$. We show that this situation is an $\varepsilon$-Nash equilibrium for the $N$-player game and we quantify the rate of convergence of $\varepsilon$ to 0 as $N$ goes to infinity.

To show this, the following restriction on Assumption \ref{B} (ii) will be required, in particular to prove Lemma \ref{abs-rho-F}.

\begin{assumption}
\label{C} There exists $C>0$ such that for any $t\in \T$ and for any $b_1$ and $b_2$ in $\mathcal{B}_2$,
\begin{equation} \begin{array}{cl}
({\normalfont \text{i}}) & F(t,\cdot, b_1) \in \mathcal{Q}^C_1,\\
({\normalfont \text{ii}}) & \|F(t,\cdot, b_1) - F(t,\cdot, b_2)\|_{\mathcal{G},1} \leq C d_1(b_1,b_2).
\end{array} \nonumber
\end{equation}
\end{assumption}

We have already fixed a solution to system \eqref{system}, now we also fix the number of players $N$; all constants $C$ appearing in the sequel are independent of $N$.

\subsection{Formulation of the game}

Let $\mathcal{N} := \{1, \ldots, N\}$ and let $i\in \mathcal{N}$. For any vector $(x^1,\ldots,x^N)$ we denote
\begin{align*}
& \bm{x}  = (x^1,\ldots,x^N), \\
& \bm{x}^{-i} =  (x^1,\ldots,x^{i-1},x^{i+1},\ldots,x^N).
\end{align*}
Consider a probability space $(\Omega, \bm{\mathcal{F}}, \mathbb{P})$. Let $(X^i_0)_{i\in \mathcal{N}}$ be i.i.d.\@ random variables with law $\mathcal{L}(X^i_0) = \bar{m}$. Let $(Y^i_{t})_{i \in \mathcal{N}, t\in \T}$ be independent random variables, independent of $(X^i_0)_{i\in \mathcal{N}}$, with law $\mathcal{L}(Y^i_t) = \nu(t)$. We denote $\bm{\nu}(t) := \bigotimes_{i = 1}^N \nu(t)$.
We define the filtration $(\bm{\mathcal{F}}_t)_{t\in \TT}$ as follows: $\bm{\mathcal{F}}_0 := \sigma(\bm{X}_0)$ is the sigma-algebra generated by $\bm{X}_0$, $\bm{\mathcal{F}}_{t+1} := \sigma(\bm{X}_0, \bm{Y}_{[t]})$. In this section we denote 
\begin{equation*}
 \bm{L}_t^p(\Omega,\mathbb{R}^{d'}) := L^p(\Omega,\bm{\mathcal{F}}_t, \mathbb{P},\mathbb{R}^{d'}),
\end{equation*}
the space of $\bm{\mathcal{F}}_t$ measurable random variables with finite $p$-th order moment and value in $\mathbb{R}^{d'}$. When the dimension is $d'=1$, we simplify the notation $ \bm{L}_t^p =  \bm{L}_t^p(\Omega,\mathbb{R})$.
For any $t\in \T$, we consider the control set
\begin{equation*}
\bm{\mathcal{A}}_t := \bm{L}_t^2(\Omega,\mathbb{R}^d), \qquad \bm{\mathcal{A}} := \bm{\mathcal{A}}_0 \times \cdots \times \bm{\mathcal{A}}_{T-1}, \qquad \bm{\mathcal{A}}^N := \prod_{i=1}^N\bm{\mathcal{A}}.
\end{equation*}
For any $t\in \T$ and for any constant $C>0$ we denote  $\bm{\mathcal{A}}^C_t$ the set of controls $A \in \bm{\mathcal{A}}_t$ such that
\begin{equation*}
\int_{\Omega}|A(\omega)|^2  \dd \mathbb{P}(\omega)\leq C
\end{equation*}
and we set $\bm{\mathcal{A}}^C := \bm{\mathcal{A}}^C_0 \times \cdots \times \bm{\mathcal{A}}^C_{T-1}$. The control of player $i\in \mathcal{N}$ is an adapted stochastic process $A^i \in \bm{\mathcal{A}}$, whose associated trajectory $(X_t^i[A^i])_{t\in \TT}$ is defined by the following state equation
\begin{equation*}
X^i_{t+1} = X^i_{t} + A^i_t + Y^i_{t}.
\end{equation*}

\begin{rmrk} \label{X0-rem}
Let $R>0$. There exists $C>0$ (depending on $R$) such that for any $i \in \mathcal{N}$ and for any $A^i \in \bm{\mathcal{A}}^R$, $\mathbb{E}\left[|X^i_t[A^i]|^2 \right] \leq C$ for any $t \in \TT$, since $\mathcal{L}(X_0^i) \in \mathcal{P}_2(\mathbb{R}^d)$ and  $\mathcal{L}(Y_t^i)\in \mathcal{P}_2(\mathbb{R}^d)$.
\end{rmrk}

Given $\bm{A} \in \bm{\mathcal{A}}^N$, we define the random empirical measure of the positions and the random empirical joint measure of the positions and actions of players by
  \begin{equation*}
 m_{\bm{A}}^{N}(t) := \frac{1}{N} \sum_{i \in \mathcal{N}}\delta_{X^i_t[A^i]}, \qquad  \mu_{\bm{A}}^{N}(t) := \frac{1}{N}\sum_{i \in \mathcal{N}} \delta_{(X^i_t[A^i],A^i_t)},
 \end{equation*}
where $\delta$ denotes the Dirac measure. We set
\begin{equation*}
b_{\bm{A}}^{N} := \left( \mu_{\bm{A}}^{N}(0),\ldots, \mu_{\bm{A}}^{N}(T-1), m_{\bm{A}}^{N}(T) \right).
\end{equation*}
For any $i \in \mathcal{N}$ and for any $t\in \T$, we define the \textit{individual conditional risk measure} $\rho^i _{t} \colon \bm{L}^1_{t+1} \to  \bm{L}^1_{t}$,
\begin{equation*} \label{riskmeasure-N-players-11}
\rho^i_{t}(U_{t+1})(\bm{x}_0,\bm{y}_{[t-1]}) = \sup_{Z \in \mathcal{Z}_t} \int_{\Omega} U_{t+1}(\bm{x}_0,\bm{y}_{[t-1]},\bm{Y}_t(\omega)) Z(Y^i_t(\omega)) \dd \mathbb{P}(\omega).
\end{equation*}
We define the set
\begin{equation*}
\bm{Q}^i_{t+1} := \left\{Q \in \bm{L}^{\infty}_{t+1}, \, Q = Z(Y^i_t) \text{ a.s.}, \, Z \in \mathcal{Z}_t\right\}.
\end{equation*}
Then $\rho^i_t$ can be expressed in the following form:
\begin{equation} \label{rho-def-Q}
\rho^i_{t}(U_{t+1})  = \sup_{Q_{t+1} \in \bm{Q}^i_{t+1}} \mathbb{E} \left[U_{t+1} Q_{t+1} \vert \bm{\mathcal{F}}_t \right].
\end{equation}
In addition we have that
\begin{align*}
\rho^i_{t}(U_{t+1})(\bm{x}_0,\bm{y}_{[t-1]}) & =  \sup_{\xi \in \mathcal{M}_{t}} \int_{\mathbb{R}^{d}}\int_{\mathbb{R}^{Nd}} U_{t+1}(\bm{x}_0,\bm{y}_{[t]}) \dd  \bm{\nu}^{-i}(t,\bm{y}_t^{-i}) \dd \xi(y^i_t),
\end{align*}
where $\bm{\nu}^{-i}(t) := \bigotimes_{j \in \mathcal{N}\setminus\{i\}}^N \nu (t)$.
Then $(\rho^i_t)_{t \in \T}$ is a family of conditional risk mappings. We define the associated individual composite risk measure $\rho^i \colon \bm{L}^1_{T}  \to \mathbb{R}$,

\begin{equation*}
\rho^i(U) := \mathbb{E} \left[ \rho^i_{0}  \circ \cdots  \circ  \rho^i_{ T-1}(U) \right].
\end{equation*}
Here players are risk averse with respect to their individual noise only.
For any $\bm{A} \in\bm{\mathcal{A}}^N$ the cost of the player $i \in \mathcal{N}$ is given by
\begin{align*}
\mathcal{J}^{i,N}(A^i,\bm{A}^{-i}) := \rho^i \left( \sum_{t=0}^{T-1} \ell(t,{X}^i_t[A^i],A^i_t,b^N_{\bm{A}}) + F(T,{X}^i_T[A^i],b^N_{\bm{A}}) \right).
\end{align*}

\begin{dfntn}
Let $\varepsilon \geq 0$. We say that an $N$-uplet $\widehat{{\bm{A}}} \in\bm{\mathcal{A}}^N$ is an $\varepsilon$-Nash equilibrium for the $N$-player game if for any $i\in \mathcal{N}$,
\begin{equation} \label{eq:nash_eps_pb}
\mathcal{J}^{i,N}(\widehat{A}^i, \widehat{\bm{A}}^{-i}) \leq \inf_{A^i \in \bm{\mathcal{A}}} \mathcal{J}^{i,N}(A^i, \widehat{\bm{A}}^{-i}) + \varepsilon.
\end{equation}
\end{dfntn}

For $\varepsilon= 0$, we recover the usual definition of a Nash equilibrium.

\subsection{An approximate Nash equilibrium}

For any player $i\in \mathcal{N}$,
we denote by $(\bar{X}_t^i)_{t\in \TT}$ the solution to the closed-loop system
\begin{equation*}
X^i_{t+1} = X^i_{t} + \bar{\alpha}_t(X^i_{t}) + Y^i_{t}.
\end{equation*}
We define the control $\bar{A}^i \in \mathcal{A}$ by
\begin{equation} \label{open-loop-control}
\bar{A}^i_t = \bar{\alpha}_t(\bar{X}^i_t).
\end{equation}
Since $\bar{X}^i_t$ is adapted to $\bm{\mathcal{F}}_t$, the control $\bar{A}_t^i$ is also $\bm{\mathcal{F}}_t$-measurable. Moreover, $\bar{\alpha}_t$ is $1$-Lipschitz and the random variables $X_0$ and $(Y_t)_{t\in \T}$ have a bounded second-order moment, thus $\bar{A}^i \in \bm{\mathcal{A}}$. In addition, by Proposition \ref{proposition:dyn_prog}, $\bar{A}^i$ minimizes the following cost $\mathcal{J}^i$:
\begin{equation} \label{eq:valueJi}
\mathcal{J}^i(A^i,\bar{b}) := \rho^i \left( \sum_{t = 0}^{T-1}  \ell (t, X^i_t[A^i] , A^i_t, \bar{b})  +F(T,X^i_T[A^i], \bar{b}) \right).
\end{equation}
Finally we set $\bar{\bm{A}} = (\bar{A}^1,\ldots,\bar{A}^N)$.
The following result states that $\bar{\bm{A}}$ is an $\varepsilon$-Nash equilibrium.

\begin{thrm} \label{thm:approx_nash}
Let $\xi \in (0,1/2)$. There exists a constant $C>0$, independent of $N$, such that the $N$-uplet $\bar{\bm{A}}$ defined above is an $\varepsilon$-Nash equilibrium with
\begin{equation*}
\varepsilon= C N^{-\tau(d)/2}, \qquad
\tau(d) =  \begin{cases} 1/2 - \xi & \text{ if } d \in \{1,2\},\\
1/d & \text{ if } d \geq 3.
\end{cases}
\end{equation*}
In addition we have that
\begin{equation} \label{eq:convergence_cost}
| \mathcal{J}^{i,N}(\bm{\bar{A}})  -  \mathcal{J}^{i}(\bar{A}^i,\bar{b})| \leq  CN^{-\tau(d)/2},
\end{equation}
for any $i \in \mathcal{N}$.
\end{thrm}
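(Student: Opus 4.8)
The plan is to prove the $\varepsilon$-Nash property by comparing, for each player $i$, the cost $\mathcal{J}^{i,N}$ in the $N$-player game against the decoupled mean-field cost $\mathcal{J}^i(\cdot,\bar{b})$ for which $\bar{A}^i$ is exactly optimal (Proposition \ref{proposition:dyn_prog}). The key quantity controlling the discrepancy is the Rubinstein-Kantorovitch distance $d_1(b^N_{\bm{A}},\bar{b})$ between the random empirical belief and the mean-field belief $\bar{b}$. Since the running and terminal costs depend on the belief only through $F(t,\cdot,b)$ and $P(t,b)$, both Lipschitz in $b$ for $d_1$ by Assumption \ref{B} (ii)--(iii) (with the reinforced $\mathcal{G}_1$-version in Assumption \ref{C}), and since $\rho^i$ is monotone and translation-invariant, I would first establish an estimate of the form $|\mathcal{J}^{i,N}(A^i,\bm{A}^{-i}) - \mathcal{J}^i(A^i,\bar{b})| \leq C\,\mathbb{E}[\,d_1(b^N_{\bm{A}},\bar{b})\,]$ uniformly over controls with bounded second moment (using Remark \ref{X0-rem} for the moment bounds). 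This is the content that a preliminary lemma (the paper's Lemma \ref{abs-rho-F}, which I would assume) should provide.

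\textbf{Controlling the empirical belief.} The core probabilistic step is to bound $\mathbb{E}[d_1(b^N_{\bar{\bm{A}}},\bar{b})]$ when every player uses $\bar{\alpha}$. In that case the trajectories $(\bar{X}^i_t,\bar{A}^i_t)_i$ are i.i.d.\@ across $i$ with common law $\bar{\mu}(t)$ (respectively $\bar{m}(T)$ at the terminal time), because the solution of the coupled system satisfies $\bar{\mu}(t)=(id,\bar{\alpha}_t)\sharp\bar{m}(t)$ and the fixed-point identity $\bar{b}=b^\star\circ\alpha^\star(\bar{b})$. Hence the empirical measure $\mu^N_{\bar{\bm{A}}}(t)$ is precisely the empirical measure of $N$ i.i.d.\@ samples from $\bar{\mu}(t)$, and I would invoke \cite[Theorem 1]{Fournier2015} to obtain $\mathbb{E}[d_1(\mu^N_{\bar{\bm{A}}}(t),\bar{\mu}(t))] \leq C N^{-\tau(d)/2}$, with the dimension-dependent rate $\tau(d)$ stated in the theorem. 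Summing over the $T+1$ components of the belief yields $\mathbb{E}[d_1(b^N_{\bar{\bm{A}}},\bar{b})] \leq C N^{-\tau(d)/2}$, which combined with the Lipschitz comparison immediately gives \eqref{eq:convergence_cost}.

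\textbf{The deviation argument.} For the $\varepsilon$-Nash inequality \eqref{eq:nash_eps_pb} I must handle a unilateral deviation: fix $i$, keep $\bar{A}^j$ for $j\neq i$, and let player $i$ use an arbitrary $A^i\in\bm{\mathcal{A}}$. The subtlety is that deviating changes the empirical belief through the $i$-th coordinate, but only that coordinate, so $d_1(b^N_{(A^i,\bar{\bm{A}}^{-i})},\, b^N_{\bar{\bm{A}}}) \leq C/N$ by the definition of $d_1$ and the $1/N$ weight of a single Dirac mass (here one needs the second moment of $X^i_t[A^i]$ bounded, which requires restricting without loss of generality to deviations with $\mathbb{E}|A^i_t|^2$ bounded, since controls with large cost cannot be near-optimal). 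Chaining the triangle inequality, $\mathbb{E}[d_1(b^N_{(A^i,\bar{\bm{A}}^{-i})},\bar{b})] \leq C(N^{-\tau(d)/2}+1/N)\leq CN^{-\tau(d)/2}$. Applying the comparison lemma twice then gives
\begin{align*}
\mathcal{J}^{i,N}(\bar{A}^i,\bar{\bm{A}}^{-i})
&\leq \mathcal{J}^i(\bar{A}^i,\bar{b}) + CN^{-\tau(d)/2}
\leq \mathcal{J}^i(A^i,\bar{b}) + CN^{-\tau(d)/2} \\
&\leq \mathcal{J}^{i,N}(A^i,\bar{\bm{A}}^{-i}) + CN^{-\tau(d)/2},
\end{align*}
where the middle inequality is the optimality of $\bar{A}^i$ for $\mathcal{J}^i(\cdot,\bar{b})$ from Proposition \ref{proposition:dyn_prog}. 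Taking the infimum over $A^i$ yields \eqref{eq:nash_eps_pb} with $\varepsilon=CN^{-\tau(d)/2}$.

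\textbf{Main obstacle.} The step I expect to be most delicate is the comparison lemma propagating $d_1(b^N,\bar{b})$ through the nested composite risk measure $\rho^i=\mathbb{E}[\rho^i_0\circ\cdots\circ\rho^i_{T-1}(\cdot)]$: because the $\rho^i_t$ are nonlinear (a supremum over $\mathcal{Z}_t$), one cannot simply linearly expand, and one must use monotonicity together with the uniform bound $\|Z\|_\infty\leq C$ from Assumption \ref{Z} to control how a perturbation in $F$ and $P$ at each stage, weighted by the density $Z$, accumulates backward through the $T$ compositions. Ensuring the resulting constant $C$ stays independent of $N$ — which hinges on the uniform second-moment bounds of the trajectories and the uniform Lipschitz/growth constants $C_u,C_\alpha$ from Lemmas \ref{alpha-u-in-G1-G2} and \ref{kolmogorov-reg} — is the crux of the argument.
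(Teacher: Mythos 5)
Your overall architecture is the same as the paper's: a comparison estimate between $\mathcal{J}^{i,N}(A^i,\bar{\bm{A}}^{-i})$ and $\mathcal{J}^i(A^i,\bar{b})$, the Fournier--Guillin bound on the empirical belief, a restriction to deviations with uniformly bounded second moment (the paper's Lemma \ref{alpha0-AC}), and the final three-inequality chain using the optimality of $\bar{A}^i$. However, there is a genuine gap at the central step, and it is precisely the step where the exponent $\tau(d)/2$ (rather than $\tau(d)$) is created. You claim a comparison lemma that is \emph{linear} in the belief error, $|\mathcal{J}^{i,N}(A^i,\bm{A}^{-i}) - \mathcal{J}^i(A^i,\bar{b})| \leq C\,\mathbb{E}[d_1(b^N_{\bm{A}},\bar{b})]$, and you attribute this content to the paper's Lemma \ref{abs-rho-F}. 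That lemma does not provide this, and such a linear bound is not obtainable by the paper's tools: the congestion term contributes $|\Delta F(t,X^i_t,b^N,\bar{b})| \leq C(1+|X^i_t|)\,d_1(b^N,\bar{b})$ where the state $X^i_t$ and the random distance $d_1(b^N,\bar{b})$ are \emph{correlated} (the empirical belief contains player $i$'s own trajectory), so one cannot factor the expectation; Cauchy--Schwarz would then require a bound on $\mathbb{E}[d_1^2]$, which Theorem \ref{fournier-thm} does not supply (it only bounds $\mathbb{E}[d_1]$). The paper's way out is Remark \ref{remark-F-holder}: using the uniform bound of Assumption \ref{C} (i), one gets the H\"older-type estimate $\|\Delta F\|_{\mathcal{G},1} \leq 2C\,d_1^{1/2}$ valid for all sizes of $d_1$, after which Cauchy--Schwarz needs only $\mathbb{E}[d_1]^{1/2} \leq CN^{-\tau(d)/2}$ (and an analogous square-root argument handles $\Delta P$ in Lemma \ref{abs-rho-P}). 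This is where the factor $1/2$ in the exponent comes from.

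Your bookkeeping hides this problem because you simultaneously misquote the Fournier--Guillin rate as $N^{-\tau(d)/2}$; the paper's Theorem \ref{fournier-thm} gives $\mathbb{E}[d_1(\mu,\mu_N)] \leq CN^{-\tau(d)}$, with no factor $1/2$. Had your linear comparison been available, the theorem would hold with the better rate $\varepsilon = CN^{-\tau(d)}$ — a sign that the claimed lemma is too strong. Separately, the difficulty you single out as the ``main obstacle,'' namely propagating the perturbation through the nested compositions $\rho^i_0 \circ \cdots \circ \rho^i_{T-1}$, is handled in the paper rather directly: subadditivity (Lemma \ref{subadditive}) gives the near triangle inequality $|\rho^i(U+V)-\rho^i(U)| \leq \rho^i(|V|)$ (Lemma \ref{ineq-rho-triangle}), and Assumption \ref{Z} gives the two-sided equivalence $\frac{1}{C}\mathbb{E}[|U|] \leq \rho^i(U) \leq C\,\mathbb{E}[|U|]$ (Lemma \ref{rho-esp-lemma}), which reduces everything to an expectation estimate in one stroke. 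You correctly identified the ingredients (monotonicity and the $\|Z\|_\infty$ bound), so that part of your sketch is a matter of execution rather than a missing idea; the real missing idea is the square-root mechanism above.
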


The proof of the theorem can be found at the end of Subsection \ref{subsection:proof_thm} (page \pageref{proof:main_thm}), which contains technical intermediate lemmas.
They rely on the following result.

\begin{thrm} (Fournier-Guillin) \label{fournier-thm}
Let $c>0$, let $\xi \in (0,1/2)$, and let $\mu \in \mathcal{P}^c_{2}(\mathbb{R}^d)$. Consider $N$ i.i.d.\@ random variables $(X_i)_{i\in\{1,\ldots,N\}}$ in $\mathbb{R}^d$ with law $\mu$ and denote by $\mu_N$ their empirical measure, defined by
\begin{equation}
\mu_N = \frac{1}{N} \sum_{i = 1}^N \delta_{X_i}.
\end{equation}
There exists a constant $C>0$ depending only on $c$, $d$, and $\xi$ such that
\begin{equation*}
\mathbb{E}\left[d_1(\mu,\mu_N))\right] \leq CN^{-\tau(d)}.
\end{equation*}
\end{thrm}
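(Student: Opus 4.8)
This final statement is the Fournier--Guillin estimate, and I would prove it by the classical multiscale (dyadic) argument. The plan is to dominate $d_1(\mu,\mu_N)$ by a scale-weighted sum of the fluctuations of $\mu_N$ on dyadic cubes, and then to take expectations using elementary binomial concentration. Two ingredients drive the proof. The first is a multiscale transport inequality: if $\mu,\nu$ are probability measures supported in a cube $Q_0\subset\mathbb{R}^d$ of side $L$, and $\mathcal{D}_n$ denotes the partition of $Q_0$ into $2^{nd}$ congruent subcubes of side $L2^{-n}$, then
\begin{equation} \nonumber
d_1(\mu,\nu) \leq C_d\, L \sum_{n \geq 0} 2^{-n} \sum_{Q \in \mathcal{D}_n} |\mu(Q) - \nu(Q)|.
\end{equation}
This is obtained by building a transport plan scale by scale, the cost of the $n$-th refinement being bounded by the cube diameter, of order $L2^{-n}$, times the mass left out of place, which is controlled by $\sum_{Q\in\mathcal{D}_n}|\mu(Q)-\nu(Q)|$. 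The second ingredient is that, for a fixed Borel set $Q$, the variable $N\mu_N(Q)$ is $\mathrm{Binomial}(N,\mu(Q))$, so that
\begin{equation} \nonumber
\mathbb{E}\big[|\mu_N(Q) - \mu(Q)|\big] \leq \sqrt{\mathrm{Var}(\mu_N(Q))} \leq \sqrt{\mu(Q)/N}, \qquad \mathbb{E}\big[|\mu_N(Q)-\mu(Q)|\big] \leq 2\mu(Q).
\end{equation}

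Consider first the compactly supported case $\mathrm{supp}\,\mu\subseteq Q_0=[-R,R]^d$. Taking $\nu=\mu_N$, taking expectations, and combining the two bounds above gives, at each scale $n$,
\begin{equation} \nonumber
\sum_{Q \in \mathcal{D}_n} \mathbb{E}\big[|\mu_N(Q)-\mu(Q)|\big] \leq \min\Big( \sum_{Q\in\mathcal{D}_n}\sqrt{\mu(Q)/N},\ 2 \Big) \leq \min\big( 2^{nd/2}N^{-1/2},\, 2\big),
\end{equation}
where the last step uses Cauchy--Schwarz, $\sum_{Q}\sqrt{\mu(Q)} \leq \sqrt{|\mathcal{D}_n|}= 2^{nd/2}$, together with $\sum_Q\mu(Q)=1$. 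Hence
\begin{equation} \nonumber
\mathbb{E}\big[d_1(\mu,\mu_N)\big] \leq C_d\, R \sum_{n \geq 0} 2^{-n}\min\big( 2^{nd/2}N^{-1/2},\,1\big).
\end{equation}
I would then split the series at the crossover scale $n^\star$ where $2^{nd/2}N^{-1/2}=1$, i.e.\@ $2^{n^\star}=N^{1/d}$. When $d\in\{1,2\}$ the exponent $d/2-1\leq 0$ makes the concentration part geometrically summable, yielding $\mathbb{E}[d_1]\leq CN^{-1/2}$ for $d=1$ and $\mathbb{E}[d_1]\leq CN^{-1/2}\log(2+N)$ for $d=2$; the logarithm is absorbed into $N^{-\tau(d)}=N^{-(1/2-\xi)}$ for any $\xi\in(0,1/2)$. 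When $d\geq3$ the tail of the series dominates and gives $\mathbb{E}[d_1]\leq CN^{-1/d}$, matching $\tau(d)=1/d$.

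It remains to drop the compact support assumption, which is exactly where the second-moment hypothesis $\mu\in\mathcal{P}^c_2(\mathbb{R}^d)$ enters. I would decompose $\mathbb{R}^d$ into dyadic shells $\mathcal{C}_0=B(0,1)$ and $\mathcal{C}_k=B(0,2^k)\setminus B(0,2^{k-1})$, apply the bounded-support estimate on each shell at scale $L\sim 2^k$ to the conditional measures, and add the contribution of the inter-shell mass mismatch. Markov's inequality gives $\mu(\mathcal{C}_k)\leq c\,2^{-2(k-1)}$, and the point is that, because $\tau(d)\leq 1/2$, the shell diameter growth $2^k$ is beaten by this moment decay: the resulting geometric factor $2^{k(2\tau(d)-1)}$ is summable in $k$ (borderline, up to a logarithmic factor, when $d\in\{1,2\}$), so the whole tail contributes at most a term of order $N^{-1/2}$, possibly times $\log N$. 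This is dominated by $N^{-\tau(d)}$ in every dimension: it is absorbed by the slack $\xi$ when $d\leq 2$, and satisfies $N^{-1/2}\leq N^{-1/d}$ when $d\geq3$. Collecting the estimates gives the claimed bound with a constant depending only on $c$, $d$, and $\xi$. The main obstacle is this last step: establishing the multiscale transport inequality with the sharp geometric weights and, above all, organizing the tail truncation so that the bare second-moment bound precisely offsets the growth of the shell scales — this balancing, rather than the scale optimization, is the delicate part, and it is exactly what dictates the appearance of the secondary $N^{-1/2}$ term and the slack $\xi$.
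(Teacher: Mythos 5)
Your argument is correct in outline, but it is worth saying plainly that the paper does not prove this statement at all: its entire proof is the one-line reduction ``direct application of \cite[Theorem 1]{Fournier2015} with $q=\frac{2}{1+2\xi}$ if $d\in\{1,2\}$ and $q=2$ if $d\geq 3$'', i.e.\ the only mathematical content in the paper is the choice of the moment parameter $q$ so that the Fournier--Guillin correction term $N^{-(q-1)/q}$ equals $N^{-(1/2-\xi)}$ when $d\leq 2$ (one cannot take $q>2$ since $\mu$ is only assumed to have second moments, whence the slack $\xi$) and is dominated by $N^{-1/d}$ when $d\geq 3$. What you have written is, in effect, a reconstruction of the proof of the cited theorem itself, and it follows the same strategy as Fournier and Guillin's original argument: a multiscale (dyadic) transport inequality on a cube, the two-sided bound $\mathbb{E}|\mu_N(Q)-\mu(Q)|\leq\min\bigl(2\mu(Q),\sqrt{\mu(Q)/N}\bigr)$ with Cauchy--Schwarz across the $2^{nd}$ cubes at scale $n$, optimization at the crossover scale $2^{n^\star}=N^{1/d}$, and a dyadic-shell truncation in which the second-moment bound $\mu(\mathcal{C}_k)\leq c\,2^{-2(k-1)}$ offsets the shell diameter $2^k$; your bookkeeping (the geometric factor $2^{k(2\tau(d)-1)}$, the borderline logarithms for $d\in\{1,2\}$ absorbed by $\xi$, the secondary $N^{-1/2}$-type term, the requirement $q=2\neq d/(d-1)$ implicit in $d\geq3$) matches the cited source's case analysis exactly. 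The one step you should make explicit if this were to stand as a full proof is the inter-shell mass mismatch: transporting excess empirical mass out of shell $k$ must be routed so that its cost is controlled by $\sum_k 2^k\,\mathbb{E}|\mu_N(\mathcal{C}_k)-\mu(\mathcal{C}_k)|$ (e.g.\ via a weighted version of the multiscale lemma over shells, as in Fournier--Guillin's Lemma 5), after which $\min(2^{-k},N^{-1/2})$ summed over $k$ indeed yields the $N^{-1/2}\log N$ term you describe. In short: your route buys a self-contained, elementary proof that makes visible exactly where the dimension split and the $\xi$-loss originate; the paper's route buys brevity by outsourcing all of this to the reference, at the price of hiding that the choice of $q$ is precisely the mechanism you rediscovered by hand.
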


\begin{proof}
The theorem is a direct application of \cite[Theorem 1]{Fournier2015} with $q = \frac{2}{1+2\xi}$ if $d \in \{1,2\}$ and $q = 2$ if $d \geq 3$.
\end{proof}

\subsection{Proof of Theorem \ref{thm:approx_nash}} \label{subsection:proof_thm}

We begin with four technical lemmas dealing with the regularity of the individual risk measures $\rho^i$.

\begin{lmm} \label{subadditive}
For any player $i\in \mathcal{N}$ the risk measure $\rho^i$ is subadditive, that is
\begin{equation*}
\rho^i(U+V) \leq \rho^i(U) + \rho^i(V),
\end{equation*}
for any $U$ and $V$ in $\bm{L}_T^1$.
\end{lmm}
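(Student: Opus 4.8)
The plan is to reduce subadditivity of the composite measure $\rho^i$ to subadditivity of each one-step mapping $\rho^i_t$, and then to propagate the latter through the composition by exploiting monotonicity. First I would note that each $\rho^i_t$ is, by construction, a one-step conditional risk mapping, so in particular it enjoys convexity (\textbf{C}) and positive homogeneity (\textbf{PH}). These two properties already force subadditivity of the single step: for any $U$ and $U'$ in $\bm{L}^1_{t+1}$,
\begin{equation*}
\rho^i_t(U+U') = 2\,\rho^i_t\!\left(\tfrac{1}{2}U + \tfrac{1}{2}U'\right) \leq 2\left(\tfrac{1}{2}\rho^i_t(U) + \tfrac{1}{2}\rho^i_t(U')\right) = \rho^i_t(U) + \rho^i_t(U').
\end{equation*}

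Next I would establish by backward induction that each partial composition is subadditive. For $0 \leq k \leq T-1$ set $\Phi_k := \rho^i_k \circ \rho^i_{k+1} \circ \cdots \circ \rho^i_{T-1} \colon \bm{L}^1_{T} \to \bm{L}^1_{k}$. The base case $k=T-1$ is exactly the one-step estimate above. For the inductive step, assuming $\Phi_{k+1}$ is subadditive and writing $\Phi_k = \rho^i_k \circ \Phi_{k+1}$, one has for any $U$ and $V$ in $\bm{L}^1_T$,
\begin{equation*}
\Phi_k(U+V) = \rho^i_k\big(\Phi_{k+1}(U+V)\big) \leq \rho^i_k\big(\Phi_{k+1}(U)+\Phi_{k+1}(V)\big) \leq \rho^i_k\big(\Phi_{k+1}(U)\big) + \rho^i_k\big(\Phi_{k+1}(V)\big) = \Phi_k(U) + \Phi_k(V),
\end{equation*}
where the first inequality combines the induction hypothesis with the monotonicity (\textbf{M}) of $\rho^i_k$, and the second uses subadditivity of $\rho^i_k$. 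Hence $\Phi_0$ is subadditive, and since $\rho^i = \mathbb{E} \circ \Phi_0$ with $\mathbb{E}$ linear and monotone, taking expectations preserves the inequality and yields the claim.

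The step I expect to require the most care is the composition argument, since at every level one must alternate correctly between \emph{monotonicity}, used to carry the subadditivity estimate for the inner composition under the next outer map, and \emph{subadditivity} of that outer map, used to split the resulting sum. There is no genuine analytic difficulty; the only bookkeeping to keep straight is that every random variable involved stays in the appropriate space, which is automatic because $\rho^i_t \colon \bm{L}^1_{t+1} \to \bm{L}^1_{t}$, so each composition is well defined on $\bm{L}^1_T$.
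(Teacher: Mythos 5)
Your proof is correct and takes essentially the same route as the paper's: a backward induction on the partial compositions $\rho^i_k \circ \cdots \circ \rho^i_{T-1}$, alternating monotonicity of the outer map with one-step subadditivity and concluding by taking expectations. The only cosmetic difference is that you derive one-step subadditivity from the axioms (\textbf{C}) and (\textbf{PH}), whereas the paper reads it off the dual representation $\rho^i_t(U) = \sup_{Q \in \bm{Q}^i_{t+1}} \mathbb{E}\left[ U Q \,\vert\, \bm{\mathcal{F}}_t \right]$ (the supremum of a sum being at most the sum of the suprema); both are one-line arguments.
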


\begin{proof}
Let us define $\pi_T^i(U)= U$ and $\pi_t^i(U)= \rho_t^i \circ \rho_{t+1}^i ... \circ \rho_{T-1}^i(U)$, for any $U \in \bm{L}_T^1$.
Note that $\pi_t^i = \rho_t^i \circ \pi_{t+1}^i$, for any $t \in \T$.
We prove by backward induction that $\pi_t^i$ is subadditive for any $t \in \TT$. The claim is trivial for $t= T$. Let $t \in \T$. Assume that $\pi_{t+1}^i$ is subadditive, let us prove that $\pi_t^i$ is subadditive.
First we observe that for any $U$ and $V$ in $\bm{L}_T^1$,
\begin{align*}
\rho^i_t(U + V) & = \sup_{Q \in \bm{Q}^i_{t+1}} \mathbb{E} \left[(U + V) Q \vert \bm{\mathcal{F}}_t \right] \\ & \leq \sup_{Q \in \bm{Q}^i_{t+1}} \mathbb{E} \left[U Q \vert \bm{\mathcal{F}}_t \right] + \sup_{Q \in \bm{Q}^i_{t+1}} \mathbb{E} \left[ V Q \vert \bm{\mathcal{F}}_t \right]
 = \rho^i_t(U) + \rho^i_t(V), \quad \text{a.s.}
\end{align*}
It follows with the monotonicity of $\rho_t^i$ that
\begin{align*}
\pi^i_t(U+V)
= \ & \rho_t^i \circ \pi^i_{t+1}(U+V) \\
\leq \ & \rho_t^i ( \pi_{t+1}^i(U) + \pi_{t+1}^i(V)) \\
\leq \ & \rho_t^i \circ \pi_{t+1}^i(U)
+ \rho_t^i \circ \pi_{t+1}^i(V)
= \pi^i_t(U) + \pi_t^i(V), \quad \text{a.s.}
\end{align*}
Recalling that $\rho^i(U) = \mathbb{E}\left[\rho^i_0 \circ \cdots \circ \rho^i_{T-1}(U) \right]= \mathbb{E} \left[ \pi_0(U) \right]$, we conclude that $\rho^i$ is also subadditive.
\end{proof}

The following result is close to a triangle inequality for risk measures. The difference with the triangle inequality is due to the positive homogeneity of risk measures, while norms are absolutely homogeneous.

\begin{lmm} \label{ineq-rho-triangle}
For any $i\in \mathcal{N}$ and for any $U$ and $V$ in $\bm{L}_T^1$, we have
\begin{equation} \label{eq:ineq_tri}
\left|\rho^i(U+V) - \rho^i(U)\right| \leq \rho^i(|V|).
\end{equation}
\end{lmm}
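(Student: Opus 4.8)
The plan is to deduce \eqref{eq:ineq_tri} from two one-sided inequalities, using only the subadditivity of $\rho^i$ established in Lemma \ref{subadditive} together with the monotonicity of $\rho^i$, which it inherits from the monotonicity property (\textbf{M}) of each one-step mapping $\rho^i_t$. Before starting I would record that $\rho^i$ is monotone: if $W \leq W'$ a.s., then applying (\textbf{M}) to $\rho^i_{T-1}, \ldots, \rho^i_0$ successively gives $\rho^i_0 \circ \cdots \circ \rho^i_{T-1}(W) \leq \rho^i_0 \circ \cdots \circ \rho^i_{T-1}(W')$ a.s., and the outer expectation preserves this ordering, so $\rho^i(W) \leq \rho^i(W')$.

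For the first inequality I would apply subadditivity directly to the decomposition of $U+V$, obtaining $\rho^i(U+V) \leq \rho^i(U) + \rho^i(V)$, hence $\rho^i(U+V) - \rho^i(U) \leq \rho^i(V)$. Since $V \leq |V|$ pointwise, monotonicity of $\rho^i$ gives $\rho^i(V) \leq \rho^i(|V|)$, and therefore $\rho^i(U+V) - \rho^i(U) \leq \rho^i(|V|)$.

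For the reverse inequality I would rewrite $U = (U+V) + (-V)$ and again invoke Lemma \ref{subadditive}, which yields $\rho^i(U) \leq \rho^i(U+V) + \rho^i(-V)$, so that $\rho^i(U) - \rho^i(U+V) \leq \rho^i(-V)$. Using $-V \leq |V|$ pointwise and monotonicity once more, $\rho^i(-V) \leq \rho^i(|V|)$, whence $\rho^i(U) - \rho^i(U+V) \leq \rho^i(|V|)$. Combining the two bounds gives $|\rho^i(U+V) - \rho^i(U)| \leq \rho^i(|V|)$, as claimed.

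There is no genuine obstacle here; the argument is a short consequence of subadditivity applied twice (once to $U+V$ and once to $U$ via the splitting with $-V$) combined with monotonicity against the pointwise bounds $\pm V \leq |V|$. The only point deserving a moment of care is confirming that the composite measure $\rho^i$ is monotone, which follows immediately from the monotonicity of each $\rho^i_t$ and the positivity-preserving nature of the conditional expectation in \eqref{rho-def-Q}.
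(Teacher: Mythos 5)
Your proof is correct and follows essentially the same route as the paper: both arguments apply subadditivity twice (once to $U+V$ and once to the splitting $U=(U+V)+(-V)$) and then use monotonicity with the pointwise bounds $\pm V \leq |V|$. The only difference is that you explicitly verify the monotonicity of the composite measure $\rho^i$, which the paper takes for granted; this is a reasonable extra detail, not a change of method.
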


\begin{proof}
By the subadditivity and by the monotonicity of $\rho^i$, we have
\begin{equation*}
\rho^i(U+V) - \rho^i(U)
\leq (\rho^i(U) + \rho^i(V)) - \rho^i(U)
= \rho^i(V)
\leq \rho^i(|V|).
\end{equation*}
Similarly, we have
\begin{equation*}
\rho^i(U) - \rho^i(U+V)
= \rho^i(U+V-V) - \rho^i(U+V)
\leq \rho^i(-V)
\leq \rho^i(|V|).
\end{equation*}
Inequality \eqref{eq:ineq_tri} follows.
\end{proof}

\begin{lmm} \label{rho-esp-lemma}
There exists $C>0$ such that for any $(i,t) \in \mathcal{N} \times \T$ and for any $U \in \bm{L}_T^1$,
\begin{equation} \label{ineq-esp-rho-lemma}
\frac{1}{C} \mathbb{E} \left[ |U| \right]
\leq \rho^i(U)
\leq C \mathbb{E} \left[ |U| \right].
\end{equation}
\end{lmm}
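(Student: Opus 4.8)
The plan is to reduce the whole estimate to a one-step bound and then propagate it through the composition $\rho^i = \mathbb{E}\circ\rho_0^i\circ\cdots\circ\rho_{T-1}^i$ by backward induction. The crucial input is the dual representation \eqref{rho-def-Q} combined with Assumption \ref{Z}. Fix $i \in \mathcal{N}$ and $t \in \T$. For any \emph{nonnegative} $\bm{\mathcal{F}}_{t+1}$-measurable $W \in \bm{L}_{t+1}^1$, I would first establish the one-step estimate
\[
\frac{1}{C}\,\mathbb{E}[W \mid \bm{\mathcal{F}}_t] \leq \rho_t^i(W) \leq C\,\mathbb{E}[W \mid \bm{\mathcal{F}}_t], \quad \text{a.s.}
\]
Indeed, by \eqref{rho-def-Q} one has $\rho_t^i(W) = \sup_{Q \in \bm{Q}_{t+1}^i}\mathbb{E}[WQ \mid \bm{\mathcal{F}}_t]$, and every $Q \in \bm{Q}_{t+1}^i$ is of the form $Q = Z(Y_t^i)$ with $Z \in \mathcal{Z}_t$. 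The boundedness part of Assumption \ref{Z} gives $0 \leq Q \leq C$ a.s., hence $WQ \leq CW$ and the upper inequality follows. For the lower inequality, Assumption \ref{Z} provides some $Z' \in \mathcal{Z}_t$ with $Z' \geq 1/C$ a.e., so that $Q' := Z'(Y_t^i) \geq 1/C$ a.s.; plugging this particular $Q'$ into the supremum and using $W \geq 0$ yields $\rho_t^i(W) \geq \mathbb{E}[WQ' \mid \bm{\mathcal{F}}_t] \geq \frac{1}{C}\mathbb{E}[W \mid \bm{\mathcal{F}}_t]$. The nonnegativity of $W$ is essential in both directions, which is precisely why the lemma is formulated with $|U|$ (for a sign-changing argument the lower bound simply fails, as already the constant $U\equiv-1$ shows via (TI) and (PH)).

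Next, I would reuse the truncated compositions $\pi_T^i(U):=U$ and $\pi_t^i := \rho_t^i\circ\pi_{t+1}^i$ introduced in the proof of Lemma \ref{subadditive}, and prove by backward induction that for every $t \in \TT$ and every nonnegative $U \in \bm{L}_T^1$,
\[
\frac{1}{C^{T-t}}\,\mathbb{E}[U \mid \bm{\mathcal{F}}_t] \leq \pi_t^i(U) \leq C^{T-t}\,\mathbb{E}[U \mid \bm{\mathcal{F}}_t], \quad \text{a.s.}
\]
The base case $t=T$ is trivial. For the induction step, I would apply the one-step estimate to $W := \pi_{t+1}^i(U)$, which is $\bm{\mathcal{F}}_{t+1}$-measurable and nonnegative by the induction hypothesis (since $U\geq 0$); combining the one-step bound with the monotonicity of conditional expectation applied to the induction hypothesis and with the tower property produces the constants $C^{T-t}$ and $C^{-(T-t)}$.

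Finally, setting $t=0$, replacing $U$ by $|U| \in \bm{L}_T^1$, and taking expectations gives
\[
\frac{1}{C^T}\,\mathbb{E}[|U|] \leq \rho^i(|U|) \leq C^T\,\mathbb{E}[|U|],
\]
which is \eqref{ineq-esp-rho-lemma} after renaming $C^T$ as $C$. I do not anticipate a serious obstacle here: the only delicate point is the bookkeeping of the nonnegativity of the intermediate variables $\pi_{t+1}^i(U)$, which must be tracked carefully so that both the one-step estimate and the monotonicity of $\rho_t^i$ can be invoked at each stage of the induction.
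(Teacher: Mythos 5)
Your proof is correct and takes essentially the same route as the paper's: the same one-step conditional estimate derived from Assumption \ref{Z} and the dual representation \eqref{rho-def-Q}, propagated by backward induction through the truncated compositions $\pi_t^i$ using monotonicity and the tower property, then an expectation at $t=0$. Your diagnosis of the sign issue is also accurate --- the paper's own proof quietly inserts the absolute value in the lower bound (it proves $\pi_t^i(|U|) \ge \frac{1}{C}\,\mathbb{E}[\,|U|\mid \bm{\mathcal{F}}_t\,]$, not the literal statement) --- the only difference being that the paper's induction keeps the upper bound $\pi_t^i(U) \le C\,\mathbb{E}[\,|U|\mid \bm{\mathcal{F}}_t\,]$ for sign-changing $U$ (the form actually invoked in Lemma \ref{alpha0-AC}), which in your formulation requires the one-line addition $\rho^i(U) \le \rho^i(|U|)$ by monotonicity.
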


\begin{proof}
All constants $C$ in the proof are independent of $U$.
Recall the definition of $\pi_t^i$, introduced in the proof of Lemma \ref{subadditive}.
We prove by backward induction that for any $t \in \TT$, there exists $C>0$ such that for any $U \in \bm{L}^1_T$,
\begin{equation*}
\frac{1}{C} \mathbb{E} \left[
|U| \, \big| \bm{\mathcal{F}}_t
\right]
\leq \pi_t^i(U)
\leq C \mathbb{E} \left[
|U| \, \big| \bm{\mathcal{F}}_t
\right], \quad \text{a.s.}
\end{equation*}
The claim is trivial for $t= T$. Let $t \in \T$. Assume that the claim holds true for $t+1$. We first observe that for any $U \in \bm{L}_{t+1}^1$,
\begin{equation}
\frac{1}{C} \mathbb{E} \left[ |U| \big| \bm{\mathcal{F}}_t \right]
\leq \rho_t^i(U) \leq C \mathbb{E} \left[ |U| \big| \bm{\mathcal{F}}_t \right], \quad \text{a.s.},
\end{equation}
as a direct consequence of Assumption \ref{Z}. It follows with the monotonicity of $\rho_t^i$ that
\begin{align*}
\pi_t^i(U)
= \ & \rho_t^i \circ \pi_{t+1}^i(U) \\
\leq \ & \rho_t^i \left(
C \mathbb{E} \left[ |U| \, \big| \bm{\mathcal{F}}_{t+1} \right]
\right) \\
\leq \ & C \mathbb{E} \left[ C \mathbb{E} \left[ |U| \, \big| \bm{\mathcal{F}}_{t+1} \right] \, \big| \bm{\mathcal{F}}_t \right]
\leq C \mathbb{E} \left[ |U| \, \big| \bm{\mathcal{F}}_t \right], \quad \text{a.s.}
\end{align*}
Similarly we prove that $\pi_t^i(|U|) \geq \frac{1}{C} \mathbb{E} \left[ |U| \, \big| \bm{\mathcal{F}}_t \right]$ a.s.
Recalling that $\rho^i(U)= \mathbb{E} \left[ \pi_0(U) \right]$, we finally obtain \eqref{ineq-esp-rho-lemma}.
\end{proof}

The following lemma is an estimate of the second-order moment of suboptimal controls (for problem \eqref{eq:nash_eps_pb}).

\begin{lmm} \label{alpha0-AC}
There exists $C>0$ such that for any $i \in \mathcal{N}$, if $\widehat{A}^i$ satisfies
\begin{equation} \label{infnasheq-lemma}
  \mathcal{J}^{i,N}(\widehat{A}^i, \bar{\bm{A}}^{-i}) \leq \inf_{A^i \in \bm{\mathcal{A}}} \mathcal{J}^{i,N}(A^i, \bar{\bm{A}}^{-i}) + 1,
\end{equation}
then $\widehat{A}^i \in \bm{\mathcal{A}}^{C}$.
\end{lmm}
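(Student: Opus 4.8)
The plan is to sandwich the cost $\mathcal{J}^{i,N}(\widehat{A}^i,\bar{\bm{A}}^{-i})$ between a uniform constant and (a constant multiple of) the $L^2$-energy $\mathbb{E}\big[\sum_{t}|\widehat{A}^i_t|^2\big]$, using the two-sided estimate of Lemma~\ref{rho-esp-lemma} to pass between the risk measure $\rho^i$ and plain expectations. Concretely, I would first bound the infimum on the right-hand side of \eqref{infnasheq-lemma} from above by a constant independent of $i$ and $N$, then bound $\mathcal{J}^{i,N}(\widehat{A}^i,\bar{\bm{A}}^{-i})$ from below by $\tfrac{1}{C}\mathbb{E}\big[\sum_{t}|\widehat{A}^i_t|^2\big]-C$, and finally combine the two inequalities with the suboptimality hypothesis \eqref{infnasheq-lemma}. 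This yields $\mathbb{E}\big[\sum_{t}|\widehat{A}^i_t|^2\big]\leq C$, and since the sum controls each term, $\mathbb{E}[|\widehat{A}^i_t|^2]\leq C$ for every $t$, i.e.\@ $\widehat{A}^i\in\bm{\mathcal{A}}^C$.

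For the upper bound on the infimum, I would simply test with the constant control $A^i\equiv 0$. The associated trajectory $X^i_t[0]$ has bounded second-order moment by Remark~\ref{X0-rem}, and under $A^i\equiv 0$ the cost is $\rho^i\big(\sum_{t=0}^{T-1}F(t,X^i_t[0],b)+F(T,X^i_T[0],b)\big)$ with $b = b^N_{(0,\bar{\bm{A}}^{-i})}$. Applying the upper estimate of Lemma~\ref{rho-esp-lemma} and then the bound $|F(t,x,b)|\leq C(1+|x|^2)$ coming from Assumption~\ref{B}~(i), I get a majorant $C\sum_t\mathbb{E}[1+|X^i_t[0]|^2]\leq C$. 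The crucial point is that this bound on $F$ is \emph{uniform in the belief} $b$, so the (otherwise awkward) dependence of $b^N_{\bm{A}}$ on the tested control is harmless. Hence $\inf_{A^i}\mathcal{J}^{i,N}(A^i,\bar{\bm{A}}^{-i})\leq C$, and by \eqref{infnasheq-lemma} also $\mathcal{J}^{i,N}(\widehat{A}^i,\bar{\bm{A}}^{-i})\leq C$.

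For the lower bound I would work pointwise on the integrand. Writing $\ell(t,x,a,b)=\tfrac12|a|^2+\langle a,P(t,b)\rangle+F(t,x,b)$ and using $|P(t,b)|\leq C$ (Assumption~\ref{B}~(iv)) together with $F\geq -C$ (Assumption~\ref{B}~(i)), Young's inequality gives $\ell(t,x,a,b)\geq \tfrac12|a|^2 - C|a| - C \geq \tfrac14|a|^2 - C$, again with $C$ independent of $b$. Summing over $t$ and using $F(T,\cdot,b)\geq -C$ for the terminal term, the total cost $\widehat{G}$ inside $\rho^i$ satisfies, almost surely,
\begin{equation*}
\widehat{G} \;\geq\; \frac14\sum_{t=0}^{T-1}|\widehat{A}^i_t|^2 - C.
\end{equation*}
By monotonicity (\textbf{M}) and translation invariance (\textbf{TI}) of $\rho^i$, this gives $\mathcal{J}^{i,N}(\widehat{A}^i,\bar{\bm{A}}^{-i}) = \rho^i(\widehat{G}) \geq \rho^i\big(\tfrac14\sum_t|\widehat{A}^i_t|^2\big) - C$, and since the argument is nonnegative, the lower estimate of Lemma~\ref{rho-esp-lemma} yields $\rho^i\big(\tfrac14\sum_t|\widehat{A}^i_t|^2\big)\geq \tfrac{1}{C}\,\mathbb{E}\big[\sum_t|\widehat{A}^i_t|^2\big]$. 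Combining this with the uniform upper bound from the previous paragraph closes the argument.

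The only genuine obstacle is conceptual rather than computational: because the empirical belief $b^N_{\bm{A}}$ contains player $i$'s own state and control, the cost $\mathcal{J}^{i,N}(A^i,\bar{\bm{A}}^{-i})$ depends on $A^i$ through $b$ as well, so one cannot fix the belief while optimizing. The resolution is to use only bounds on $F$ and $P$ that hold \emph{uniformly over all beliefs} (Assumptions~\ref{B}~(i) and~(iv)); once that is noticed, every step above is insensitive to the self-referential belief, and the rest is routine.
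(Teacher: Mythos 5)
Your proposal is correct and follows essentially the same route as the paper: test the infimum with the zero control and bound the resulting cost via Lemma \ref{rho-esp-lemma}, Assumption \ref{B} (i) and Remark \ref{X0-rem}, then obtain a coercive lower bound on $\mathcal{J}^{i,N}(\widehat{A}^i,\bar{\bm{A}}^{-i})$ from Young's inequality and the uniform-in-belief bounds of Assumptions \ref{B} (i) and (iv), and combine with \eqref{infnasheq-lemma}. If anything, your version is slightly more careful than the paper's, since you pass through monotonicity and translation invariance so that the lower estimate of Lemma \ref{rho-esp-lemma} is applied to a nonnegative random variable, whereas the paper invokes that lemma directly on a possibly signed integrand.
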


\begin{proof}
Let $i \in \mathcal{N}$ and let  $\widehat{A}^i$ satisfy (\ref{infnasheq-lemma}).
All constants $C$ in the proof are independent of $\widehat{A}^i$.
We have
\begin{equation*}
\mathcal{J}^{i,N}(\widehat{A}^i, \bar{\bm{A}}^{-i}) \leq  \mathcal{J}^{i,N}(0, \bar{\bm{A}}^{-i}) + 1 = \rho^i\left( \sum_{t = 0}^T F\left(t,X^i_t[0],b_{(0,\bar{\bm{A}}^{-i})}^N\right) \right) + 1. \label{vin-pi}
\end{equation*}
By Assumption \ref{B} (i), Lemma \ref{rho-esp-lemma}, and Remark \ref{X0-rem},
\begin{equation*}
\rho^i\left( \sum_{t = 0}^T F\left(t,X^i_t[0],b_{(0,\bar{\bm{A}}^{-i})}^N\right) \right) \leq C \mathbb{E}\left[ T + \sum_{t = 0}^T|X^i_t[0]|^2 \right] \leq C.
\end{equation*}
Therefore,
\begin{equation} \label{vin-C}
\mathcal{J}^{i,N}(\widehat{A}^i, \bar{\bm{A}}^{-i}) \leq C.
\end{equation}
We need now to bound $\mathcal{J}^{i,N}(\widehat{A}^i, \bar{\bm{A}}^{-i})$ from below. We obtain by using successively Lemmma \ref{rho-esp-lemma}, Assumptions \ref{B} (i) and (iv), and Young's inequality that
\begin{equation} \label{eq:lower_bound}
\mathcal{J}^{i,N}(\widehat{A}^i, \bar{\bm{A}}^{-i})
\geq \frac{1}{C} 
\mathbb{E}\left[ \, \sum_{t = 0}^{T-1} \left( \frac{1}{2}
 \vert \widehat{A}^i_t\vert^2 - C \vert \widehat{A}^i_t \vert \right) \right]
 - C
 \geq \frac{1}{C} 
\mathbb{E}\left[ \, \sum_{t = 0}^{T-1}
 \vert \widehat{A}^i_t\vert^2  \right]
 - C.
\end{equation}
We deduce then from \eqref{vin-C} and \eqref{eq:lower_bound} that
$\mathbb{E}\left[ \, \sum_{t = 0}^{T-1}
 \vert \widehat{A}^i_t\vert^2  \right] \leq C,
$
which concludes the proof.
\end{proof}

In the following we fix a constant $c>0$ such that the result of Lemma \ref{alpha0-AC} holds and such that $\bar{A}^i \in \bm{\mathcal{A}}^c$ for any $i \in \mathcal{N}$.
Let $b$ and $b'$ in $\mathcal{B}_2$, for any $(t,t',x) \in \T \times \TT \times \mathbb{R}^d$ we define
\begin{equation*}
\Delta P(t,b, b') := P\left(t,b\right)-P(t,b'), \qquad \Delta F(t',x,b,b') := F(t',x,b)  - F(t',x,b'). 
\end{equation*}
For any $(x,A) \in  \mathbb{R}^{Td} \times \bm{\mathcal{A}}$ we define
\begin{equation*}
\Delta \ell(x,A,b, b') := \sum_{t=0}^{T-1} \langle A_t,\Delta P(t,b,b') \rangle + \sum_{t=0}^{T} \Delta F(t,x_t,b,b').
\end{equation*}

\begin{rmrk} \label{remark-F-holder}
For any $t\in \T$ and for any $b$ and $b'$ in $\mathcal{B}_2$, we have
\begin{equation*}
\| \Delta F\left(t,\cdot,b,b'\right) \|_{\mathcal{G},1} \leq 2C d_1(b,b')^{1/2}.
\end{equation*}
Indeed if $d_1(b,b') \geq 1$, Assumption \ref{C} {\normalfont (\text{i})} yields
\begin{equation*} 
\| \Delta F\left(t,\cdot,b,b'\right) \|_{\mathcal{G},1} \leq 2 \sup_{b \in \mathcal{B}_2} \| F \left(t,\cdot,b\right) \|_{\mathcal{G},1} \leq 2C.
\end{equation*}
If $d_1(b,b') \leq 1$, by Assumption \ref{C} {\normalfont (\text{ii})} we have
\begin{equation*}
\| \Delta F\left(t,\cdot,b,b'\right) \|_{\mathcal{G},1} \leq C d_1(b,b') \leq C d_1(b,b')^{1/2}.
\end{equation*}
\end{rmrk}

In the following lemma we study the convergence of the empirical belief to the reference belief $\bar{b}\in \mathcal{B}_2$.

\begin{lmm} \label{lemma-dist-d1}
There exists $C>0$ such that for any $i\in \mathcal{N}$ and for any $A^i \in \bm{\mathcal{A}}^c$,
\begin{equation} \label{d1-estim-i}
\mathbb{E} \left[ d_1\left(b_{(A^i,\bar{\bm{A}}^{-i})}^{N} ,\bar{b}\right) \right] \leq CN^{-\tau(d)}.
\end{equation}
\end{lmm}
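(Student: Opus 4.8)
The plan is to estimate each of the $T+1$ marginal components of $d_1\bigl(b^N_{(A^i,\bar{\bm{A}}^{-i})},\bar{b}\bigr)$ separately, since the distance on $\mathcal{B}_2$ is, up to a fixed constant, the sum of the $d_1$-distances between the corresponding components. For every time index introduce the \emph{fully-reference} empirical measures $\tilde{m}^N(t) := \frac{1}{N}\sum_{j\in\mathcal{N}}\delta_{\bar{X}^j_t}$ and $\tilde{\mu}^N(t) := \frac{1}{N}\sum_{j\in\mathcal{N}}\delta_{(\bar{X}^j_t,\bar{A}^j_t)}$, where $\bar{A}^j_t = \bar{\alpha}_t(\bar{X}^j_t)$. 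Because $\bar{b}$ is a fixed point of the coupled system, the closed-loop law satisfies $\mathcal{L}(\bar{X}^j_t) = \bar{m}(t)$ and $\mathcal{L}\bigl(\bar{X}^j_t,\bar{\alpha}_t(\bar{X}^j_t)\bigr) = (\mathrm{id},\bar{\alpha}_t)\sharp\bar{m}(t) = \bar{\mu}(t)$, so $\tilde{m}^N(t)$ and $\tilde{\mu}^N(t)$ are the empirical measures of $N$ i.i.d.\@ samples with laws $\bar{m}(t)$ and $\bar{\mu}(t)$. For each component I then split, via the triangle inequality for $d_1$, the distance into a \emph{deviation} term (empirical belief versus fully-reference measure) and a \emph{sampling} term (fully-reference measure versus reference belief).

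For the deviation term, observe that $\mu^N_{(A^i,\bar{\bm{A}}^{-i})}(t)$ and $\tilde{\mu}^N(t)$ differ only in their $i$-th atom, so for any $\phi\in 1\text{-Lip}$ one has $\int_{\mathbb{R}^{2d}}\phi\,\dd\bigl(\mu^N_{(A^i,\bar{\bm{A}}^{-i})}(t)-\tilde{\mu}^N(t)\bigr) = \frac{1}{N}\bigl(\phi(X^i_t[A^i],A^i_t)-\phi(\bar{X}^i_t,\bar{A}^i_t)\bigr) \leq \frac{1}{N}\bigl(|X^i_t[A^i]-\bar{X}^i_t|+|A^i_t-\bar{A}^i_t|\bigr)$. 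Taking the supremum over $\phi$ and then the expectation, and using $A^i\in\bm{\mathcal{A}}^c$ together with Remark \ref{X0-rem} to bound the first-order moments of $X^i_t[A^i]$, $\bar{X}^i_t$, $A^i_t$, and $\bar{A}^i_t$ by a constant independent of $N$, I obtain $\mathbb{E}\bigl[d_1(\mu^N_{(A^i,\bar{\bm{A}}^{-i})}(t),\tilde{\mu}^N(t))\bigr] \leq C/N$. Since $\tau(d)<1$ in all cases, this is dominated by $CN^{-\tau(d)}$. The terminal component $m^N(T)$ is treated identically, the difference reducing to the single term $\frac{1}{N}|X^i_T[A^i]-\bar{X}^i_T|$.

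The crucial step is the sampling term, and here lies the main obstacle. Applying Theorem \ref{fournier-thm} directly to $\tilde{\mu}^N(t)$, a measure on $\mathbb{R}^{2d}$, would only give the slower rate $N^{-\tau(2d)}$; to recover the claimed exponent $\tau(d)$ I exploit the push-forward structure. Writing $G_t := (\mathrm{id},\bar{\alpha}_t)$, which lies in $\mathcal{G}_1^C\cap C\text{-Lip}$ since $\bar{\alpha}_t\in\mathcal{G}_1^{C_\alpha}\cap 1\text{-Lip}$, one has $\tilde{\mu}^N(t) = G_t\sharp\tilde{m}^N(t)$ and $\bar{\mu}(t) = G_t\sharp\bar{m}(t)$. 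Lemma \ref{g-sharp-m} applied with the \emph{common} map $g_1=g_2=G_t$ (so that the growth term $\|g_1-g_2\|_{\mathcal{G},1}$ vanishes and no moment bound on the random measure $\tilde{m}^N(t)$ is needed) yields $d_1(\tilde{\mu}^N(t),\bar{\mu}(t)) \leq C\,d_1(\tilde{m}^N(t),\bar{m}(t))$, reducing the estimate to dimension $d$. Since $\bar{m}(t)\in\mathcal{P}_2^{C_b}(\mathbb{R}^d)$ and $\tilde{m}^N(t)$ is its empirical measure from $N$ i.i.d.\@ samples, Theorem \ref{fournier-thm} gives $\mathbb{E}\bigl[d_1(\tilde{m}^N(t),\bar{m}(t))\bigr]\leq CN^{-\tau(d)}$; for the terminal marginal $m^N(T)$ one applies Fournier--Guillin directly on $\mathbb{R}^d$, with no push-forward needed. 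Summing the deviation and sampling estimates over the $T+1$ components delivers \eqref{d1-estim-i}. The genuine difficulty is precisely this dimension reduction through the common Lipschitz map $G_t$: a naive application of the empirical-measure estimate to the joint laws on $\mathbb{R}^{2d}$ would destroy the announced $N^{-\tau(d)}$ rate.
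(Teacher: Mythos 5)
Your proposal is correct and follows essentially the same route as the paper's own proof: the same triangle-inequality split into a single-atom deviation term bounded by $C/N$ and a sampling term, the same dimension reduction of the joint empirical measure via the Lipschitz push-forward $(\mathrm{id},\bar{\alpha}_t)$ and Lemma \ref{g-sharp-m}, and the same application of Theorem \ref{fournier-thm} to the i.i.d.\@ states $\bar{X}^j_t$ with law $\bar{m}(t)$. Your measures $\tilde{m}^N(t)$ and $\tilde{\mu}^N(t)$ are exactly the paper's $\bar{m}^N_{\bar{\bm{A}}}(t)$ and $\mu^N_{\bar{\bm{A}}}(t)$, so the argument matches step for step.
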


\begin{proof}
Let $i \in \mathcal{N}$ and let $A^i \in \bm{\mathcal{A}}^c$. For any $t\in \T$, we have by the triangle inequality
\begin{equation} \label{d1_interm_1}
d_1\left(\mu^N_{(A^i,\bar{\bm{A}}^{-i})}(t) ,\bar{\mu}(t)\right)  \leq d_1\left(\mu^N_{(A^i,\bar{\bm{A}}^{-i})}(t), \mu^N_{\bar{\bm{A}}}(t) \right) + d_1\left(\mu^N_{\bar{\bm{A}}}(t) ,\bar{\mu}(t)\right).
\end{equation}
Let us consider the first term of the right-hand side.
By definition of the distance $d_1$,
\begin{equation} \label{d1-interm-XX-alpha}
d_1 \left(\mu^N_{(A^i,\bar{\bm{A}}^{-i})}(t), \mu^N_{\bar{\bm{A}}}(t) \right) \leq \frac{1}{N} \left( |X_t^i[A^i]- \bar{X}_t^i| + |A^i_t - \bar{A}^i_t| \right), \quad \text{a.s.}
\end{equation}
Since the controls $\bar{A}^i$ and $A^i$ belong to $ \bm{\mathcal{A}}^c$, the first-order moment of $X_t^i[A^i]$ and $\bar{X}_t$ are finite as a consequence of Remark \ref{X0-rem}, thus
\begin{align} \label{E-X-alpha}
\mathbb{E}\left[ |X_t^i[A^i]- \bar{X}_t^i| + |A^i_t - \bar{A}^i_t| \right] \leq C.
\end{align}
Therefore, by \eqref{d1-interm-XX-alpha} and \eqref{E-X-alpha}, we have
\begin{equation} \label{a_1}
d_1\left(\mu^N_{(A^i,\bar{\bm{A}}^{-i})}(t), \mu^N_{\bar{\bm{A}}}(t) \right) \leq \frac{C}{N}.
\end{equation}
Let us consider now the second-term of the right-hand side of \eqref{d1_interm_1}. We recall that $\bar{\mu}(t)= (id,\bar{\alpha}_t) \sharp \bar{m}(t)$. Since $\bar{A}_t^j= \bar{\alpha}_t(\bar{X}_t^j)$, we also have $\mu^N_{\bar{\bm{A}}}(t) = (id, \bar{\alpha}_t)\sharp \bar{m}^{N}_{\bar{\bm{A}}}(t)$.
We deduce from the Lipschitz continuity of $(id,\bar{\alpha}_t)$ and from Lemma \ref{g-sharp-m} that
\begin{align} \nonumber
d_1\left(\mu^N_{\bar{\bm{A}}}(t) ,\bar{\mu}(t)\right) & = d_1\left((id, \bar{\alpha}_t)\sharp \bar{m}^{N}_{\bar{\bm{A}}}(t),(id, \bar{\alpha}_t)\sharp \bar{m}(t)\right)
\\ & \leq C d_1\left( \bar{m}^{N}_{\bar{\bm{A}}}(t), \bar{m}(t)\right). \label{d1-mu-m}
\end{align}
The random variables $\bar{X}_t^j$ are independent and $\mathcal{L}(\bar{X}^j_t) \sim \bar{m}(t)$. Therefore, Theorem \ref{fournier-thm} applies and yields
\begin{equation} \label{E-d1-CN}
\mathbb{E} \left[ d_1\left( \bar{m}^{N}_{\bar{\bm{A}}}(t), \bar{m}(t)\right) \right] \leq CN^{-\tau(d)}.
\end{equation}
Combining \eqref{d1_interm_1}, \eqref{a_1}, \eqref{d1-mu-m}, and \eqref{E-d1-CN}, we obtain
\begin{equation*}
\mathbb{E} \left[ d_1\left(\mu^N_{(A^i,\bar{\bm{A}}^{-i})}(t) ,\bar{\mu}(t)\right) \right] \leq CN^{-\tau(d)}.
\end{equation*}
It is then easy to verify that
\begin{equation*}
\mathbb{E} \left[ d_1\left(m^N_{(A^i,\bar{\bm{A}}^{-i})}(T) ,\bar{m}(T) \right) \right] \leq CN^{-\tau(d)}.
\end{equation*}
Estimate \eqref{d1-estim-i} follows immediately.
\end{proof}

\begin{lmm} \label{abs-rho-F}
There exists $C>0$ such that for any $i\in \mathcal{N}$, for any $t\in \TT$, and for any $A^i \in \bm{\mathcal{A}}^c$, we have
\begin{align*}
\mathbb{E} \left[ \, \left| \Delta F\left(t,X^i_t[A^i],b_{(A^i,\bar{\bm{A}}^{-i})}^{N},\bar{b}\right)\right| \, \right]   \leq CN^{-\tau(d)/2}.
\end{align*}
\end{lmm}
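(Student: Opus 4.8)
The plan is to combine the Hölder-type estimate on $F$ from Remark \ref{remark-F-holder} with the convergence rate of the empirical belief established in Lemma \ref{lemma-dist-d1}, linking the two through the Cauchy--Schwarz inequality.

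First I would turn Remark \ref{remark-F-holder} into a pointwise bound. Writing $b^N := b_{(A^i,\bar{\bm{A}}^{-i})}^{N}$ for brevity and recalling that $|f(x)| \leq \|f\|_{\mathcal{G},1}(1+|x|)$ for any $f \in \mathcal{G}_1$, I obtain
\begin{equation*}
\left| \Delta F\left(t,X^i_t[A^i],b^N,\bar{b}\right)\right|
\leq 2C\, d_1\!\left(b^N,\bar{b}\right)^{1/2} \left(1 + |X^i_t[A^i]|\right), \quad \text{a.s.}
\end{equation*}
The key feature is the exponent $1/2$ carried by $d_1(b^N,\bar{b})$; this Hölder regularity is exactly what the strengthened Assumption \ref{C} (through Remark \ref{remark-F-holder}) supplies, and it is what will ultimately match the target rate $N^{-\tau(d)/2}$.

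Next I would take expectations and apply the Cauchy--Schwarz inequality. This step is essential because the empirical belief $b^N$ depends on the trajectory $X^i_t[A^i]$ of player $i$ itself, so the two factors cannot be decoupled by independence. Separating them yields
\begin{equation*}
\mathbb{E}\left[ \left| \Delta F\left(t,X^i_t[A^i],b^N,\bar{b}\right)\right| \right]
\leq 2C \left(\mathbb{E}\left[d_1\!\left(b^N,\bar{b}\right)\right]\right)^{1/2} \left(\mathbb{E}\left[\left(1 + |X^i_t[A^i]|\right)^2\right]\right)^{1/2}.
\end{equation*}
The first factor is controlled by Lemma \ref{lemma-dist-d1}, which gives $\left(\mathbb{E}[d_1(b^N,\bar{b})]\right)^{1/2} \leq C N^{-\tau(d)/2}$. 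The second factor is bounded by a constant independent of $N$: since $A^i \in \bm{\mathcal{A}}^c$, Remark \ref{X0-rem} ensures $\mathbb{E}[|X^i_t[A^i]|^2] \leq C$, whence $\mathbb{E}[(1+|X^i_t[A^i]|)^2] \leq C$. Multiplying the two bounds then gives the announced estimate.

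I do not anticipate a genuine difficulty here; the only subtle point is the correlation between $b^N$ and $X^i_t[A^i]$, which rules out a direct factorization and makes Cauchy--Schwarz (rather than independence) the right tool. The second mild point to watch is that the half-power in Remark \ref{remark-F-holder} is precisely what converts the rate $N^{-\tau(d)}$ of Lemma \ref{lemma-dist-d1} into the required $N^{-\tau(d)/2}$, so no regularity is wasted.
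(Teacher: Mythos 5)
Your proposal is correct and follows essentially the same route as the paper's proof: the pointwise H\"older bound from Remark \ref{remark-F-holder}, then the Cauchy--Schwarz inequality to separate $\mathbb{E}\big[(1+|X^i_t[A^i]|)^2\big]^{1/2}$ (bounded via Remark \ref{X0-rem} since $A^i \in \bm{\mathcal{A}}^c$) from $\mathbb{E}\big[d_1(b^N,\bar{b})\big]^{1/2}$ (bounded via Lemma \ref{lemma-dist-d1}). Your remark that the correlation between $b^N$ and $X^i_t[A^i]$ is what forces Cauchy--Schwarz rather than a factorization by independence is exactly the point implicit in the paper's argument.
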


\begin{proof}
Let $N \in \mathbb{N}^\star$ and let $t\in \TT$. By Remark \ref{remark-F-holder}, we have
\begin{equation} \label{X2-bounded}
\mathbb{E} \left[ \, \left| \Delta F\left(t,X^i_t[A^i],b_{(A^i,\bar{\bm{A}}^{-i})}^{N},\bar{b}\right)  \right| \, \right] \leq C \mathbb{E} \left[\left(1+|X^i_t[A^i]|\right)d_1\left(b_{(A^i,\bar{\bm{A}}^{-i})}^{N},\bar{b}\right)^{1/2} \right].
\end{equation}
Since $A^i \in \bm{\mathcal{A}}^c$, by Remark \ref{X0-rem} we have that
$\mathbb{E} \left[|X^i_t[A^i]|^2\right] \leq C$.
We obtain with the Cauchy-Schwarz inequality and Lemma \ref{lemma-dist-d1} that
\begin{align} \nonumber
\mathbb{E} \left[(1+|X^i_t[A^i]|) d_1\left(b_{(A^i,\bar{\bm{A}}^{-i})}^{N},\bar{b}\right)^{1/2} \right] & \leq \mathbb{E} \left[(1+|X^i_t[A^i]|)^2 \right]^{1/2} \mathbb{E} \left[d_1\left(b_{(A^i,\bar{\bm{A}}^{-i})}^{N},\bar{b}\right)\right]^{1/2} \\ & \leq  CN^{-\tau(d)/2}.\label{EX-d1-CN}
\end{align}
Combining \eqref{X2-bounded} and \eqref{EX-d1-CN}, we obtain the announced inequality.
\end{proof}

\begin{lmm} \label{abs-rho-P}
There exists $C>0$ such that for any $i \in \mathcal{N}$, for any $t\in \T$ and for any $A^i \in \bm{\mathcal{A}}^c$, we have
\begin{align} \label{main-estimates-P}
\mathbb{E} \left[ \, \left|\langle A^i_t, \Delta P\left(t,b_{(A^i,\bar{\bm{A}}^{-i})}^{N},\bar{b}\right) \rangle \right| \, \right]  \leq CN^{-\tau(d)/2}.
\end{align}
\end{lmm}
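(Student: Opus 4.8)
The plan is to mimic the proof of Lemma \ref{abs-rho-F}, replacing the Hölder-type estimate of Remark \ref{remark-F-holder} for $F$ by an analogous one for $P$. First, by the Cauchy--Schwarz inequality applied to the scalar product in $\mathbb{R}^d$, one has almost surely
\begin{equation*}
\left|\langle A^i_t, \Delta P(t,b_{(A^i,\bar{\bm{A}}^{-i})}^{N},\bar{b})\rangle\right|
\leq |A^i_t| \, \left|\Delta P(t,b_{(A^i,\bar{\bm{A}}^{-i})}^{N},\bar{b})\right|,
\end{equation*}
so it suffices to bound the expectation of the right-hand side.

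The key observation is that $\Delta P$ enjoys the same square-root bound as $\Delta F$: for any $b$ and $b'$ in $\mathcal{B}_2$, $|\Delta P(t,b,b')| \leq 2C\, d_1(b,b')^{1/2}$. Indeed, when $d_1(b,b') \geq 1$ the uniform boundedness of $P$ (Assumption \ref{B} (iv)) gives $|\Delta P(t,b,b')| \leq 2C \leq 2C\, d_1(b,b')^{1/2}$, whereas when $d_1(b,b') \leq 1$ the Lipschitz estimate (Assumption \ref{B} (iii)) gives $|\Delta P(t,b,b')| \leq C\, d_1(b,b') \leq C\, d_1(b,b')^{1/2}$. This is exactly the mechanism already used for $F$ in Remark \ref{remark-F-holder}.

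With this bound in hand, I would write $\mathbb{E}[\,|A^i_t|\,|\Delta P|\,] \leq 2C\, \mathbb{E}[\,|A^i_t|\, d_1(b_{(A^i,\bar{\bm{A}}^{-i})}^{N},\bar{b})^{1/2}\,]$ and apply the Cauchy--Schwarz inequality in $L^2(\Omega)$ to the product of $|A^i_t|$ and $d_1^{1/2}$:
\begin{equation*}
\mathbb{E}\left[|A^i_t|\, d_1(b_{(A^i,\bar{\bm{A}}^{-i})}^{N},\bar{b})^{1/2}\right]
\leq \mathbb{E}\left[|A^i_t|^2\right]^{1/2}\,
\mathbb{E}\left[d_1(b_{(A^i,\bar{\bm{A}}^{-i})}^{N},\bar{b})\right]^{1/2}.
\end{equation*}
The first factor is at most $c^{1/2}$ since $A^i \in \bm{\mathcal{A}}^c$, and the second factor is at most $(CN^{-\tau(d)})^{1/2}$ by Lemma \ref{lemma-dist-d1}; multiplying yields the announced rate $CN^{-\tau(d)/2}$.

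The only subtlety I would flag is the passage from the first-power Lipschitz bound $|\Delta P| \leq C\, d_1$ to the square-root bound $|\Delta P| \leq C\, d_1^{1/2}$. Using the first-power bound directly together with the \emph{first}-moment estimate of Lemma \ref{lemma-dist-d1} would force control of a quantity such as $\mathbb{E}[d_1^2]$ or $\mathbb{E}[|A^i_t|^2 d_1]$, which is problematic: only second-order moments of $A^i$ and of the states are available (the definition of $\bm{\mathcal{A}}^c$ and Remark \ref{X0-rem}), not fourth-order ones, so a naive Cauchy--Schwarz splitting would leave an uncontrolled factor. Combining the Lipschitz continuity with the uniform boundedness of $P$ to gain the exponent $1/2$ is precisely what makes a single application of Cauchy--Schwarz against $\mathbb{E}[d_1]$ sufficient, and thereby aligns this estimate with the rate obtained in Lemma \ref{abs-rho-F}.
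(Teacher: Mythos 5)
Your proof is correct and follows essentially the same route as the paper's: both arguments hinge on combining the uniform bound on $P$ (Assumption \ref{B} (iv)) with its Lipschitz continuity (Assumption \ref{B} (iii)) to gain the exponent $1/2$, then conclude with one Cauchy--Schwarz inequality in $L^2(\Omega)$ using $\mathbb{E}\left[|A^i_t|^2\right] \leq c$ and the estimate $\mathbb{E}\left[d_1\right] \leq C N^{-\tau(d)}$ of Lemma \ref{lemma-dist-d1}. The only cosmetic difference is that the paper writes the interpolation as $|\Delta P|^2 \leq 2C |\Delta P| \leq C d_1$ and bounds $\mathbb{E}\left[|\Delta P|^2\right]$ directly, whereas you phrase the same estimate (up to constants) as the pointwise bound $|\Delta P| \leq 2C\, d_1^{1/2}$ obtained by the case split of Remark \ref{remark-F-holder}.
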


\begin{proof}
Let $i \in \mathcal{N}$, let $t \in \T$, and let $A^i \in \bm{\mathcal{A}}^c$.
By the Cauchy-Schwarz inequality, we have
\begin{equation} \label{abs-alpha-p-p}
\mathbb{E}\left[ \, \left| \langle A^i_t, \Delta P\left(t,b_{(A^i,\bar{\bm{A}}^{-i})}^{N},\bar{b}\right) \rangle \right| \, \right] \leq C \left( \mathbb{E}\left[ \, \left| \Delta P\left(t,b_{(A^i,\bar{\bm{A}}^{-i})}^{N},\bar{b}\right)\right|^2 \, \right] \right)^{1/2}.
\end{equation}
We obtain with Assumptions \ref{B} (iii-iv) and Lemma \ref{lemma-dist-d1} that
\begin{align} \nonumber
\mathbb{E}\left[ \, \left|\Delta P\left(t,b_{(A^i,\bar{\bm{A}}^{-i})}^{N},\bar{b}\right) \right|^2 \, \right]
& \leq 2C \mathbb{E} \left[ \, \left| \Delta P\left(t,b_{(A^i,\bar{\bm{A}}^{-i})}^{N},\bar{b}\right) \right| \, \right]  \\
& \leq C \mathbb{E} \left[ d_1 \left(b_{(A^i,\bar{\bm{A}}^{-i})}^{N},\bar{b} \right) \right]
\leq CN^{-\tau(d)}.\label{p-p-cn}
\end{align}
Combining (\ref{abs-alpha-p-p}) and (\ref{p-p-cn}), we deduce \eqref{main-estimates-P}.
\end{proof}

We finally prove the main result of the section.

\begin{proof}[Proof of Theorem \ref{thm:approx_nash}] \label{proof:main_thm}
Let $i\in \mathcal{N}$.
We first show that for any $A^i \in \bm{\mathcal{A}}^{c}$, the inequality
\begin{equation} \label{J-ineq}
|\mathcal{J}^{i,N}(A^i,\bm{\bar{A}}^{-i})  -  \mathcal{J}^{i}(A^i,\bar{b})| \leq  CN^{-\tau(d)/2}
\end{equation}
holds for some constant $C > 0$ independent of $A^i$. This will imply \eqref{eq:convergence_cost}. For any $A^i \in \bm{\mathcal{A}}^{c}$, we can write
$\mathcal{J}^{i,N}(A^i,\bm{\bar{A}}^{-i})= \rho^i(Y)$
and
$\mathcal{J}^{i}(A^i,\bar{b})= \rho^i(Z)$,
where
\begin{align*}
Y:= \ & \sum_{t=0}^{T-1} \ell\left(t,X_t^i[A^i],A^i_t,b_{(A^i,\bar{\bm{A}}^{-i})}^{N}\right) + F\left(T,X_T^i[A^i],b_{(A^i,\bar{\bm{A}}^{-i})}^{N}\right), \\
Z:= \ & \sum_{t=0}^{T-1} \ell\left(t,X_t^i[A^i],A^i_t,\bar{b}\right) + F\left(T,X_T^i[A^i], \bar{b} \right).
\end{align*}
Applying Lemma \ref{ineq-rho-triangle} with $U = Z$ and $V = Y-Z$ we have
\begin{equation*}
 |\mathcal{J}^{i,N}(A^i,\bm{\bar{A}}^{-i})  -  \mathcal{J}^{i}(A^i,\bar{b})| = | \rho^i( Y) -  \rho^i( Z) | \leq \rho^i( |Y-Z|).
\end{equation*}
In addition, Lemma \ref{ineq-esp-rho-lemma} yields
\begin{equation*}
\rho^i( |Y-Z|) \leq C \mathbb{E} \left[ \, |Y-Z| \, \right] = C \mathbb{E} \left[ \, \left| \Delta \ell\left(X^i[A^i],b_{(A^i,\bar{\bm{A}}^{-i})}^{N},\bar{b} \right) \right| \, \right].
\end{equation*} 
We finally obtain \eqref{J-ineq} with Lemma \ref{abs-rho-F} and Lemma \ref{abs-rho-P}.

Let us fix now $\widehat{A}^i \in \bm{\mathcal{A}}$ such that
\begin{equation} \label{Ji-Ahat-Ji}
\mathcal{J}^{i,N}(\widehat{A}^i, \bar{\bm{A}}^{-i}) \leq \left( \inf_{A^i \in \bm{\mathcal{A}}} \mathcal{J}^{i,N}(A^i, \bar{\bm{A}}^{-i}) \right) + \min \left\{ 1, N^{-\tau(d)/2} \right\}.
\end{equation}
By Lemma \ref{alpha0-AC}, we have $\widehat{A}^i \in \bm{\mathcal{A}}^c$.
Thus inequality \eqref{J-ineq} yields
\begin{align} 
\mathcal{J}^{i}(\widehat{A}^i,\bar{b})
\leq \ & \mathcal{J}^{i,N}(\widehat{A}^i,\bm{\bar{A}}^{-i}) + C N^{-\tau(d)/2} \notag \\
\leq \ & \left( \inf_{A^i \in \bm{\mathcal{A}}} \mathcal{J}^{i,N}(A^i, \bar{\bm{A}}^{-i}) \right) + C N^{-\tau(d)/2}. \label{ineq-2-J}
\end{align}
We apply again inequality \eqref{J-ineq} to $A^i= \bar{A}^i$. Using also the optimality of $\bar{A}^i$ (with respect to $\mathcal{J}^i$), we obtain
\begin{equation} \label{ineq-Ji-Ji}
 \mathcal{J}^{i,N}(\bar{\bm{A}})- C N^{-\tau(d)/2} \leq \mathcal{J}^{i}(\bar{A}^i,\bar{b}) \leq \mathcal{J}^{i}(\widehat{A}^i,\bar{b}).
\end{equation}
Finally, combining \eqref{ineq-2-J} and \eqref{ineq-Ji-Ji} we have
\begin{equation*}
 \mathcal{J}^{i,N}(\bar{\bm{A}}) \leq \left( \inf_{A^i \in \bm{\mathcal{A}}} \mathcal{J}^{i,N}(A^i, \bar{\bm{A}}^{-i}) \right) + C N^{-\tau(d)/2},
\end{equation*}
which shows that $\bar{\bm{A}}$ is an $\varepsilon$-Nash equilibrium with $\varepsilon = CN^{-\tau(d)/2}$.
\end{proof}

\section{Conclusion}

This paper has studied a mean field game model with risk averse agents, and provided a framework under which an equilibrium holds, for a large
class of composite risk measures and congestion terms.
The specific structure of the integral cost of the agents has been exploited in order to rewrite the dynamic programming equations in a functional form (using the Moreau envelope and the proximal operator). In that way, the coupled system could be formulated as an equivalent fixed point equation, yielding the existence of a solution.
Regularity properties have been obtained for risk averse agents. This has allowed to show that an optimal feedback control (for the mean field game) results in an $\varepsilon$-Nash equilibrium for a related dynamic game with $N$ players.
Future work could focus on the uniqueness of the Nash equilibrium with contraction arguments and smallness assumptions on the coupling terms.
In this work, risk averse (with respect to their own noise) agents have been considered; investigating a mean field game model with common noise and risk averse agents would be of particular interest.
Finally, we could investigate variants of our model involving agents driven by nonlinear dynamical systems, nonconvex data functions, or exponential utility cost functions. In such a setting we cannot expect anymore the value function to be convex and thus, a feedback policy cannot be defined in a unique manner. A different notion of equilibrium must then be employed. An appropriate one may rely on the distribution of the controls of the agents at each time, conditioned to their position, as for example in \cite{basar-discrete}, where an existence result is obtained with Kakutani's theorem.

\bibliographystyle{plain}
\bibliography{biblio}

\end{document}